\documentclass[preprint,12pt,nonatbib]{elsarticle}

\usepackage{amsmath,amssymb,amsfonts,amsthm,latexsym}
\usepackage{enumerate}
\usepackage[
backend=biber,
style=numeric,
sorting=nyt,
giveninits=false
]{biblatex}
\usepackage{hyperref}
\DeclareNameAlias{author}{family-given}
\DeclareNameAlias{editor}{family-given}
\theoremstyle{plain}
\newtheorem{theorem}{Theorem}[section]

\newtheorem{prop}[theorem]{Proposition}

\theoremstyle{definition}
\newtheorem{defn}[theorem]{Definition}
\newtheorem{example}[theorem]{Example}

\theoremstyle{remark}

\biboptions{sort&compress}

\journal{Linear Algebra and its Applications}

\begin{document}

\begin{frontmatter}

\title{Characterization of Frame-Related Sequences on Semi-Hilbert Spaces}

\author[inst1]{Hemalatha M\corref{cor1}}
\ead{hemapadma28@gmail.com}

\author[inst1]{P. Sam Johnson}
\ead{sam@nitk.edu.in}

\cortext[cor1]{Corresponding author.}

\address[inst1]{Department of Mathematical and Computational Sciences, National Institute of Technology Karnataka, Mangaluru 575025, India}

\begin{abstract}
Let $A\in\mathcal{B}(H)^+$ and $B\in\mathcal{B}(\ell^2)^+$ be positive bounded operators. We investigate reduced weighted adjoints of densely defined closable operators and use them to develop frame-type systems in the semi-Hilbert spaces induced by $A$ and $B$. Closedness, boundedness, range behavior, and algebraic properties of the $A$-adjoint are established, with particular attention to sums, products, and double adjoints in the unbounded setting. We then study the associated $AB$-analysis, $AB$-synthesis, and $AB$-frame operators. Operator-theoretic characterizations of $AB$-Bessel sequences, $AB$-lower semi-frames, and $AB$-frames are obtained, and examples show that several natural weighted-adjoint identities may hold only as proper inclusions. A Douglas-type range characterization for $AB$-frames is also derived.
\end{abstract}

\begin{keyword}
semi-Hilbert space \sep $A$-adjoint \sep closable operator \sep weighted frame \sep lower semi-frame
\MSC[2020] 42C15 \sep 47A05 \sep 46B15 \sep 40A05
\end{keyword}

\end{frontmatter}

\section{Introduction}

Let $H$ be a complex Hilbert space and let $A\in\mathcal{B}(H)^+$ be positive. The operator $A$ determines the semi-inner product and seminorm
\[
\langle f,g\rangle_A=\langle Af,g\rangle,
\qquad
\|f\|_A=\langle Af,f\rangle^{1/2},
\qquad f,g\in H.
\]
The pair $(H,\langle\cdot,\cdot\rangle_A)$ is referred to as a semi-Hilbert space. When $A$ is noninjective, the induced form is degenerate; when $\mathcal{R}(A)$ is not closed, completeness and range arguments require additional care. Consequently, familiar Hilbert-space identities cannot always be transferred verbatim to this setting.

Positive semidefinite forms have a long history, beginning with work of Krein \cite{krein1998}. Related weighted operator structures appear in the work of Lax \cite{lax1954} and Dieudonn\'e. A systematic operator theory for semi-Hilbert spaces, including weighted adjoints, projections, and partial isometries, was subsequently developed by Arias, Corach, and Gonz\'alez \cite{antoine1981partial,arias2008partial,arias2008metric}. Closed and unbounded operators in this framework were considered more recently by Baklouti and Namouri \cite{baklouti2022closed}.

For a densely defined operator $T$, the $A$-adjoint $T^{\#}$ is characterized by
\[
AT^{\#}=T^*A,
\qquad
\mathcal{R}(T^{\#})\subseteq\overline{\mathcal{R}(A)}.
\]
Its existence is governed by the range condition $\mathcal{R}(T^*A)\subseteq\mathcal{R}(A)$. In contrast with the ordinary Hilbert-space adjoint, a reduced weighted adjoint need not exist. Moreover, for unbounded operators, formulas for sums, products, and iterated adjoints generally require explicit domain assumptions and may yield inclusions rather than equalities.

Frame theory, initiated by Duffin and Schaeffer \cite{7}, provides a natural source of such operators. Analysis and synthesis operators associated with Bessel sequences, lower semi-frames, and related systems may be unbounded, so weighted adjoint techniques are particularly relevant. We therefore consider two positive weights: $A\in\mathcal{B}(H)^+$ on the signal space and $B\in\mathcal{B}(\ell^2)^+$ on the coefficient space.

The first part of the paper develops basic properties of $A$-adjoints for densely defined closable operators. We examine boundedness, closedness, double-adjoint relations, sums, products, null spaces, and weighted orthogonality. The second part introduces $AB$-Bessel sequences, $AB$-lower semi-frames, and $AB$-frames. Their analysis, synthesis, and frame operators are related to the corresponding reduced weighted adjoints. We obtain range and operator characterizations, exhibit examples in which natural inclusions are strict, and conclude with a Douglas-type criterion for $AB$-frames.

Throughout, $A\in\mathbb{B}(H)^+$ and $B\in\mathbb{B}(\ell^2)^+$ are fixed positive bounded operators. $\mathcal{D}(T)$, $\mathcal{R}(T)$, and $\mathcal{N}(T)$ denote the domain, range, and null space of an operator $T$, respectively. The Hilbert-space adjoint and closure are denoted by $T^*$ and $\overline{T}$. The symbol $T^{\#}$ denotes the relevant reduced weighted adjoint. We write $P_A=P_{\overline{\mathcal{R}(A)}}$ and $\mathbb{N}=\{1,2,\ldots\}$.

\section{Preliminaries}
\begin{prop}\cite{Kreyszig}
Let $T$ and $U$ be densely defined linear operators. Then
\begin{enumerate}
\item If $\mathcal{D}(T+U)$ is dense, then $(T+U)^* \supseteq T^*+U^*.$ Moreover, if $U$ is bounded, then $(T+U)^* = T^*+U^*.$
\item If $\mathcal{D}(TU)$ is dense, then $(TU)^*\supseteq U^*T^*$. Moreover, if $T$ is bounded, then $(TU)^*=U^*T^*$.
\item If $T\subseteq U$, then $U^*\subseteq T^*$.
\end{enumerate}
\end{prop}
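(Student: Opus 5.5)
The plan is to argue directly from the definition of the Hilbert-space adjoint. For a densely defined $S$, we have $g\in\mathcal{D}(S^*)$ exactly when there is $g^*$ with $\langle Sf,g\rangle=\langle f,g^*\rangle$ for all $f\in\mathcal{D}(S)$, and then $S^*g=g^*$. Density of $\mathcal{D}(S)$ makes $g^*$ unique. I prove item (3) first, since it is used in the other two.

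\textbf{Item (3).} Suppose $T\subseteq U$ and $g\in\mathcal{D}(U^*)$. Then for every $f\in\mathcal{D}(T)\subseteq\mathcal{D}(U)$ we have $\langle Tf,g\rangle=\langle Uf,g\rangle=\langle f,U^*g\rangle$. Hence $g\in\mathcal{D}(T^*)$ and $T^*g=U^*g$, which is the inclusion $U^*\subseteq T^*$.

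\textbf{Item (1).} Let $g\in\mathcal{D}(T^*+U^*)=\mathcal{D}(T^*)\cap\mathcal{D}(U^*)$. For $f\in\mathcal{D}(T)\cap\mathcal{D}(U)$,
\[
\langle (T+U)f,g\rangle=\langle f,T^*g+U^*g\rangle .
\]
Since $\mathcal{D}(T+U)$ is dense, this gives $g\in\mathcal{D}((T+U)^*)$ and $(T+U)^*g=T^*g+U^*g$. This proves the inclusion.

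Now assume $U$ is bounded. Then $U$ extends to a bounded operator on all of $H$, $\mathcal{D}(U^*)=H$, and $\mathcal{D}(T+U)=\mathcal{D}(T)\cap\mathcal{D}(U)$. I read the equality statement with $\mathcal{D}(U)=H$; otherwise I replace $U$ by its bounded extension and note that the adjoint of $U$ is unchanged. For the reverse inclusion, take $g\in\mathcal{D}((T+U)^*)$ and $f\in\mathcal{D}(T)$. Then
\[
\langle Tf,g\rangle=\langle (T+U)f,g\rangle-\langle Uf,g\rangle=\langle f,(T+U)^*g-U^*g\rangle .
\]
So $g\in\mathcal{D}(T^*)=\mathcal{D}(T^*+U^*)$, and equality follows.

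\textbf{Item (2).} Take $g\in\mathcal{D}(U^*T^*)$, so $g\in\mathcal{D}(T^*)$ and $T^*g\in\mathcal{D}(U^*)$. For $f\in\mathcal{D}(TU)$, that is $f\in\mathcal{D}(U)$ with $Uf\in\mathcal{D}(T)$,
\[
\langle TUf,g\rangle=\langle Uf,T^*g\rangle=\langle f,U^*T^*g\rangle .
\]
By the density assumption, $g\in\mathcal{D}((TU)^*)$, which gives the inclusion.

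\textbf{Main obstacle: the reverse inclusion in (2) when $T$ is bounded.} As in (1), take $T$ bounded and everywhere defined, so $T^*\in\mathcal{B}(H)$ and $\mathcal{D}(TU)=\mathcal{D}(U)$. Let $g\in\mathcal{D}((TU)^*)$; we need $T^*g\in\mathcal{D}(U^*)$. For $f\in\mathcal{D}(U)$,
\[
\langle Uf,T^*g\rangle=\langle TUf,g\rangle=\langle f,(TU)^*g\rangle .
\]
Hence $T^*g\in\mathcal{D}(U^*)$ with $U^*T^*g=(TU)^*g$, and $(TU)^*=U^*T^*$.

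The only genuine subtlety is domain bookkeeping. If $T$ is merely bounded on a dense proper domain, I first replace it by its continuous extension, and I check that this does not change $\mathcal{D}(TU)$ in the intended reading. This is the step I would state carefully as a standing convention.
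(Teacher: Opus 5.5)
The paper gives no proof of this proposition; it simply cites Kreyszig. Your argument is the standard verification from the definition of the adjoint. It is correct for all three items provided ``bounded'' means bounded and everywhere defined, i.e.\ $U\in\mathcal{B}(H)$ in (1) and $T\in\mathcal{B}(H)$ in (2). Then $\mathcal{D}(T+U)=\mathcal{D}(T)$ and $\mathcal{D}(TU)=\mathcal{D}(U)$, which is exactly what your two reverse-inclusion arguments use.

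You should, however, drop the fallback remarks about passing to the continuous extension, because that reduction is not valid. The extension leaves $U^*$ (resp.\ $T^*$) unchanged, but it enlarges $\mathcal{D}(T+U)$ (resp.\ $\mathcal{D}(TU)$). By your own item (3), restricting an operator can strictly enlarge its adjoint. In fact the equalities fail for bounded operators that are only densely defined.

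On $H=\ell^2$ let $\phi(f)=\sum_n nf_n$ for $f\in c_{00}$. Since $\phi$ is discontinuous, $\mathcal{N}(\phi)$ is dense.
\begin{enumerate}
\item For (1), let $\mathcal{D}(T)=c_{00}$, $Tf=\phi(f)e_1$, and $U=0|_{\mathcal{N}(\phi)}$. Then $T+U=0|_{\mathcal{N}(\phi)}$, so $(T+U)^*=0$ on all of $H$, while $T^*+U^*=0|_{\{e_1\}^\perp}$.
\item For (2), let $T=I|_{c_{00}}$ and $Uf=\phi(f)u$ on $c_{00}$, with $u=(1/n)_n\notin c_{00}$. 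Then $\mathcal{D}(TU)=\mathcal{N}(\phi)$ is dense and $(TU)^*=0$ on $H$, but $U^*T^*=U^*=0|_{\{u\}^\perp}$.
\end{enumerate}
In both cases, replacing the bounded factor by its extension would restore equality, which shows the extension changes the left-hand side. So $U\in\mathcal{B}(H)$, resp.\ $T\in\mathcal{B}(H)$, must be part of the hypothesis, not a normalization you can reach by extending.
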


\begin{defn}\cite{kato}
\begin{enumerate}
\item A linear operator $A$ on $H$ is called \emph{closed} if its graph
$G(A)=\{(f,Af):f\in \mathcal{D}(A)\}$
is a closed subspace of $H\times H$.

\item Let $A$ be a densely defined closed operator on $H$. The
\emph{reduced minimum modulus} of $A$ is
\[
\gamma(A)
=\inf\left\{\frac{\|Af\|}{\|f\|}:
0\neq f\in\mathcal{D}(A)\cap\mathcal{N}(A)^\perp\right\}.
\]
\end{enumerate}
\end{defn}

\begin{theorem}\cite{limaye}
Let $H$ and $K$ be Banach spaces and let $F:H\rightarrow K$ be a closed linear operator with $\mathcal{D}(F)=H$. Then $F$ is bounded.
\end{theorem}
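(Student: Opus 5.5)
The statement is the closed graph theorem: a closed linear operator $F$ defined on all of a Banach space $H$ with values in a Banach space $K$ is bounded. My plan is to deduce it from the open mapping theorem (equivalently, the bounded inverse theorem), applied to a bijection built from the graph.

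\emph{Step 1: the graph is a Banach space.} Equip $H\times K$ with the norm $\|(f,k)\|=\|f\|+\|k\|$. It is complete because $H$ and $K$ are. Since $F$ is closed, its graph $G(F)=\{(f,Ff):f\in H\}$ is a closed subspace of $H\times K$. Hence $G(F)$ is itself a Banach space.

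\emph{Step 2: invert the projection.} Define the projection $\pi:G(F)\to H$ by $\pi(f,Ff)=f$. It is linear and bounded, since $\|f\|\le\|f\|+\|Ff\|$.
\begin{itemize}
\item It is injective: the graph contains exactly one point over each $f$.
\item It is surjective: this is where the hypothesis $\mathcal{D}(F)=H$ is used.
\end{itemize}
By the bounded inverse theorem, $\pi^{-1}:H\to G(F)$ is bounded. So there is a constant $c>0$ with
\[
\|f\|+\|Ff\|\le c\|f\| \qquad \text{for all } f\in H.
\]
In particular $\|Ff\|\le c\|f\|$, so $F$ is bounded.

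\emph{Main obstacle.} The only substantial ingredient is the open mapping theorem, which rests on the Baire category theorem. Since the paper cites this result from \cite{limaye}, I would take the open mapping theorem as known and not reprove it. The remaining checks are routine: completeness of the product norm, and the fact that closedness of the graph is exactly what makes $G(F)$ complete.
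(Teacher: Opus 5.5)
Your proof is correct. It is the standard derivation of the closed graph theorem from the bounded inverse theorem, and each step holds:
\begin{itemize}
\item $G(F)$ is complete as a closed subspace of $H\times K$;
\item the projection $\pi$ is a bounded bijection, with surjectivity coming from $\mathcal{D}(F)=H$;
\item $\|Ff\|\le c\|f\|$ follows from the boundedness of $\pi^{-1}$.
\end{itemize}

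The paper gives no proof of this statement. It is quoted from \cite{limaye} as a preliminary tool, so there is no in-paper argument to compare with.

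The one point to keep in mind is logical order. Some textbook treatments, possibly including the cited one, prove the closed graph theorem first, directly from Baire's theorem. One way is to show that, when $F$ is closed and $K$ is complete, $p(f)=\|Ff\|$ is a countably subadditive seminorm on $H$, and that such seminorms on a Banach space are continuous. These treatments then deduce the open mapping and bounded inverse theorems from the closed graph theorem. Inside such a development your reduction would be circular. You should therefore take the open mapping theorem from a source that proves it independently, such as the usual Banach--Schauder category argument.

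Granting that, your route is the shortest one. The direct Baire route has the advantage that a single lemma yields the uniform boundedness principle, the closed graph theorem, and the open mapping theorem together.
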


\begin{theorem}\cite{kato}\label{closed}
Let $A$ be a densely defined closed operator on $H$. Then the following conditions are equivalent.
\begin{enumerate}
\item $\gamma(A)>0$.
\item $\mathcal{R}(A)$ is closed.
\item $\mathcal{R}(A^*)$ is closed.
\end{enumerate}
\end{theorem}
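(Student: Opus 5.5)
The plan is to prove (1)$\Rightarrow$(2), then (2)$\Leftrightarrow$(3) by a duality argument, and finally (2)$\Rightarrow$(1) via the closed graph theorem quoted above. Throughout, write $T$ for the closed densely defined operator (called $A$ in the statement) to avoid a clash with the weight $A$. The first step is a reduction to an injective operator with the same range. Since $T$ is closed, $\mathcal{N}(T)$ is closed. Every $f\in\mathcal{D}(T)$ splits as $f=f_0+f_1$ with $f_0\in\mathcal{N}(T)$ and $f_1\in\mathcal{D}(T)\cap\mathcal{N}(T)^\perp$; this works because $\mathcal{N}(T)\subseteq\mathcal{D}(T)$, so $f_1=f-P_{\mathcal{N}(T)}f\in\mathcal{D}(T)$. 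Let $T_0$ be the restriction of $T$ to $\mathcal{D}(T)\cap\mathcal{N}(T)^\perp$. Then $T_0$ is closed, injective, and $\mathcal{R}(T_0)=\mathcal{R}(T)$, and by definition $\gamma(T)=\inf_{f\neq 0}\|T_0f\|/\|f\|$.

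For (1)$\Rightarrow$(2), take $Tf_n\to g$ with $f_n\in\mathcal{D}(T_0)$. From $\|f_n-f_m\|\le\gamma(T)^{-1}\|Tf_n-Tf_m\|$ the sequence $(f_n)$ is Cauchy, so $f_n\to f$. Closedness of $T$ then gives $f\in\mathcal{D}(T)$ and $Tf=g$, so $\mathcal{R}(T)$ is closed.

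For (2)$\Rightarrow$(1), consider $T_0^{-1}:\mathcal{R}(T)\to\mathcal{N}(T)^\perp$. Its graph is the flip of the graph of $T_0$, hence closed. It is everywhere defined on the Banach space $\mathcal{R}(T)$, which is closed by (2). The closed graph theorem quoted above then makes $T_0^{-1}$ bounded, say with norm $c$. So $\|f\|\le c\|T_0f\|$, and hence $\gamma(T)\ge 1/c>0$.

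The main obstacle is (2)$\Leftrightarrow$(3). We have $\mathcal{N}(T^*)=\mathcal{R}(T)^\perp$ and $\overline{\mathcal{R}(T^*)}=\mathcal{N}(T)^\perp$, the latter using $T^{**}=T$ since $T$ is closed. Also, $T^*$ is closed and densely defined. By symmetry, applying the argument to $T^*$ with $T^{**}=T$, it suffices to show (2)$\Rightarrow$(3). The plan is to prove $\gamma(T^*)\ge\gamma(T)$, after which (1)$\Rightarrow$(2) applied to $T^*$ gives (3). Take $g\in\mathcal{D}(T^*)\cap\mathcal{N}(T^*)^\perp=\mathcal{D}(T^*)\cap\mathcal{R}(T)$, using that $\mathcal{R}(T)$ is closed, and write $g=T_0h$. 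Then
\[
\|g\|^2=\langle T_0h,g\rangle=\langle h,T^*g\rangle\le\|h\|\,\|T^*g\|\le\gamma(T)^{-1}\|g\|\,\|T^*g\|,
\]
which yields $\|T^*g\|\ge\gamma(T)\|g\|$. Hence $\gamma(T^*)\ge\gamma(T)>0$, and $\mathcal{R}(T^*)$ is closed by the first implication applied to $T^*$. The converse follows by the symmetric argument with $T^{**}=T$.
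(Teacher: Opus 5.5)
The paper gives no proof of this statement; it is quoted from Kato, so there is no in-paper argument to compare against. Your proof is correct and complete in the Hilbert-space setting the paper works in.

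For (1)$\Leftrightarrow$(2), you restrict to $T_0=T|_{\mathcal{D}(T)\cap\mathcal{N}(T)^\perp}$ and apply the closed graph theorem to $T_0^{-1}$ on the Banach space $\mathcal{R}(T)$. This is the Hilbert-space form of the standard argument with the quotient operator on $H/\mathcal{N}(T)$. Your observation that $\mathcal{N}(T)\subseteq\mathcal{D}(T)$ is exactly what makes the orthogonal splitting of $\mathcal{D}(T)$ legitimate.

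For (2)$\Leftrightarrow$(3), your route is more elementary than the Banach-space one. There, this equivalence is the closed range theorem and requires annihilators and Hahn--Banach duality. You instead use the inner product directly: $\|g\|^2=\langle h,T^*g\rangle$ for $g=T_0h\in\mathcal{D}(T^*)\cap\mathcal{R}(T)$ gives $\gamma(T^*)\ge\gamma(T)$, and $T^{**}=T$ closes the loop. The price is reliance on $\mathcal{N}(T^*)^\perp=\overline{\mathcal{R}(T)}$ and orthogonal complements, which is harmless here.

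As a bonus, your estimate holds unconditionally, since it is vacuous when $\gamma(T)=0$. Symmetry therefore also yields $\gamma(T^*)=\gamma(T)$.

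The only loose end is cosmetic. If $T=0$, the infimum defining $\gamma(T)$ is over the empty set and $\|T_0^{-1}\|=0$. You should adopt the convention $\gamma(0)=+\infty$, or dispose of $T=0$ separately, before writing $\gamma(T)\ge 1/c$.
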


\begin{theorem}\cite{von}\label{von}
Let $H$ be a separable Hilbert space and let $A$ be an unbounded self-adjoint operator on $H$. Then there exists a unitary operator $U$ such that
$\mathcal{D}(U^*AU)\cap \mathcal{D}(A)=\{0\}$.
\end{theorem}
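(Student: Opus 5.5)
This is von Neumann's theorem: for an unbounded self-adjoint operator $A$ on a separable Hilbert space there is a unitary $U$ with $\mathcal{D}(U^*AU)\cap\mathcal{D}(A)=\{0\}$. The paper cites it from \cite{von}, but one could prove it as follows. Since $\mathcal{D}(U^*AU)=U^*\mathcal{D}(A)$, it suffices to find a unitary $V$ (then take $U=V^*$) with
\[
V\mathcal{D}(A)\cap\mathcal{D}(A)=\{0\}.
\]
The plan has three steps: reduce to a diagonal model, build $V$ as an infinite product of small rotations, and control its action on domain vectors.

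\textbf{Reduction.} By the spectral theorem, $\mathcal{D}(A)=\mathcal{D}(|A|)$, so we may replace $A$ by $|A|+I\ge I$, which is still unbounded. Choose disjoint spectral intervals $\Delta_k$ accumulating at $\infty$ with $E(\Delta_k)\neq 0$. Pick unit vectors $e_k\in\mathcal{R}(E(\Delta_k))$, and extend to an orthonormal basis adapted to the spectral decomposition. This lets us compare $\mathcal{D}(A)$ with a weighted sequence space: $\mathcal{D}(A)$ contains the vectors $\sum c_k e_k$ with $\sum |c_k|^2\lambda_k^2<\infty$, where $\lambda_k\to\infty$ rapidly. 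The cleanest setting is $A$ diagonal in an orthonormal basis $(e_n)$ with eigenvalues $\lambda_n\nearrow\infty$. The general case can then be handled by splitting $H$ into spectral pieces: a diagonal unbounded part, and a remainder that is either treated the same way or mapped into the "bad" directions. I would do the diagonal case carefully and argue the general case by the same block construction on the spectral subspaces $\mathcal{R}(E(\Delta_k))$.

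\textbf{Construction.} Group the basis into finite blocks $F_k$ of increasing size $N_k$, on which $\lambda_n\approx\mu_k$ with $\mu_k$ growing superexponentially. On each block, and across consecutive blocks, let $V$ act as a "spreading" unitary, for instance a discrete Fourier transform on $F_k$ combined with a rotation coupling $F_k$ with a tail of later blocks. The effect is that any vector with nontrivial mass in a low block is sent to a vector whose mass spreads into high blocks with a square-summable but not $\lambda$-weighted-summable profile. Concretely, aim for: if $0\ne f\in\mathcal{D}(A)$, then $Vf$ has components $\langle Vf,e_n\rangle$ with $\sum|\langle Vf,e_n\rangle|^2\lambda_n^2=\infty$. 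An alternative, closer to von Neumann's original proof, picks $V$ mapping $e_n$ to $\sum_m a_{nm}e_m$ with $|a_{nm}|^2\sim c_n/m^{1+\epsilon}$-type tails, arranged so that every nonzero finite or rapidly decaying combination acquires a heavy tail.

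\textbf{Main obstacle.} The hard step is showing that no nonzero $f\in\mathcal{D}(A)$ can have cancellations making $Vf$ again lie in $\mathcal{D}(A)$. Heavy tails can cancel among infinitely many $f$-components, so the construction must be rigid enough to rule this out for all $f$, not just basis vectors. I would first try a Baire-category argument instead of an explicit $V$. Equip the unitary group with the strong operator topology, which makes it Polish. Write $\mathcal{D}(A)=\bigcup_n K_n$ with $K_n=\{f:\|f\|\le n,\ \|Af\|\le n\}$, which are compact subsets of a Hilbert-cube model when $A^{-1}$ is compact (the diagonal case). Then show that the set of $V$ for which
\[
V\bigl(K_n\cap\{\|f\|\ge 1/m\}\bigr)\cap K_{n'}=\emptyset
\]
holds is open and dense. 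Openness follows from compactness. Density requires perturbing a given $V$ by a rotation into very high eigendirections, and this is exactly where unboundedness of $A$ is used. Intersecting over the countably many $n,n',m$ gives a dense $G_\delta$ of good unitaries. If $A^{-1}$ is not compact, I would first restrict to the unbounded diagonal part produced in the reduction step.
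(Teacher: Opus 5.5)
The paper gives no proof of this statement (it is quoted from von Neumann), so your proposal has to stand on its own.

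\textbf{The compact-resolvent case.} Your Baire-category argument is correct when $A$, after replacing it by $|A|+I$, has compact resolvent. In that case $K_n$ is norm-compact. Put $L=K_n\cap\{\|f\|\ge 1/m\}$. The set of $V$ with $VL\cap K_{n'}=\emptyset$ is open, because $K_{n'}$ is closed and strong convergence of unitaries is uniform on the compact set $L$. Density holds by the rotation you indicate. Given $V_0$, rotate each $e_j$, $j\le N$, through a fixed small angle $\theta$ towards a distinct $e_{\sigma(j)}$ with $\lambda_{\sigma(j)}$ huge. Compactness of $V_0L$ makes $\|(I-P_N)g\|$ uniformly small on $V_0L$. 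Hence $\|ARg\|\ge \min_j\lambda_{\sigma(j)}\bigl(\sin\theta\,\|P_Ng\|-\|(I-P_N)g\|\bigr)>n'$ whenever $Rg\in\mathcal{D}(A)$, while $RV_0$ stays norm-close to $V_0$. This already covers the only instance the paper uses, $A^{-1}=\operatorname{diag}(1,2,3,\ldots)$. The ``Construction'' paragraph, by contrast, is not an argument and can be dropped.

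\textbf{The gap: the general case.} Your fallback, ``restrict to the unbounded diagonal part'', does not work, for four reasons.
\begin{enumerate}
\item There need not be a diagonal part. $\overline{\operatorname{span}}\{e_k\}$ with $e_k\in\mathcal{R}(E(\Delta_k))$ need not reduce $A$, and $A$ may have no eigenvectors at all (multiplication by $1+|x|$ on $L^2(\mathbb{R})$).
\item Controlling a subspace says nothing about the rest of $\mathcal{D}(A)$. If $A$ is bounded on an infinite-dimensional spectral subspace $H_b$, then $H_b\subseteq\mathcal{D}(A)$, and $V$ must move all of $H_b$ off $\mathcal{D}(A)$.
\item The Baire scheme cannot be run on $A$ itself. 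When $(A+i)^{-1}$ is not compact, $\mathcal{D}(A)$ is not a countable union of norm-compact sets (apply Baire in the graph norm).
\item Your reduction compares in the wrong direction. The property $V\mathcal{D}\cap\mathcal{D}=\{0\}$ passes to subsets, so you need $\mathcal{D}(A)$ \emph{contained in} a model domain, not containing one.
\end{enumerate}

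\textbf{The missing idea: enlarge the domain.} For $A\ge I$, the operator $\phi(A)=\sum_k 2^kE([2^k,2^{k+1}))$ has the same domain and an orthonormal eigenbasis. Infinitely many of these blocks are nonzero, so you can regroup the basis as follows. Fix one basis vector $u_t$ in the $t$-th nonzero block. Let $N_j$ consist of the other basis vectors of the $j$-th nonzero block together with all $u_t$, $t\in\{2^{j-1}(2i-1):i\in\mathbb{N}\}$. Each $N_j$ is infinite, and all eigenvalues on $N_j$ are at least $\mu_j$, the eigenvalue of the $j$-th nonzero block, with $\mu_j\to\infty$. The operator $A'$ equal to $\mu_j$ on $\overline{\operatorname{span}}\,N_j$ then satisfies $\mathcal{D}(A)\subseteq\mathcal{D}(A')$. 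Moreover $A'\cong D\otimes I_{\ell^2}$ with $D=\operatorname{diag}(\mu_j)$, which has compact resolvent.

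Take $V_0$ from your Baire argument for $D$ and put $V=V_0\otimes I$. Suppose $0\neq x=\sum_i a_i\otimes b_i\in\mathcal{D}(A')$, with $(b_i)$ orthonormal. Then every $a_i\in\mathcal{D}(D)$ and some $a_i\neq 0$. So $V_0a_i\notin\mathcal{D}(D)$, and hence $Vx\notin\mathcal{D}(A')\supseteq\mathcal{D}(A)$.
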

\begin{theorem}\cite{douglas1966majorization}\label{douglas}
If $S$ and $T$ are closed densely defined operators on a Hilbert space $H$, then the following statements hold.
\begin{enumerate}
\item[(i)] If $SS^{*}\leq TT^{*}$, then there exists a contraction $V$ such that $S\subseteq TV$.

\item[(ii)] If $\mathcal{R}(S)\subseteq\mathcal{R}(T)$, then there exists a unique densely defined operator $V$ satisfying $S=TV$ and $\mathcal{R}(V)\subseteq\mathcal{R}(T^\dagger)$. Moreover, there exists a constant $M\geq0$ such that $\|Vf\|^{2}\leq M\left(\|f\|^{2}+\|Sf\|^{2}\right)$ for every $f\in \mathcal{D}(V)$. Furthermore, $V$ is bounded whenever $S$ is bounded and $V$ is closed whenever $T$ is bounded.
\end{enumerate}
\end{theorem}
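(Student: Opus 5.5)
The statement is Douglas's majorization and factorization theorem for closed densely defined operators, Theorem~\ref{douglas}. I would prove the two parts separately, using only graph-closedness and orthogonal decompositions.

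For (i), I would define $V$ first on the range of $T^*$ and then extend it. The hypothesis $SS^*\le TT^*$ is read in the form sense: $\mathcal{D}(T^*)\subseteq\mathcal{D}(S^*)$ and $\|S^*f\|\le\|T^*f\|$ for $f\in\mathcal{D}(T^*)$. The construction is as follows.
\begin{itemize}
\item Set $W_0(T^*f)=S^*f$ on $\mathcal{R}(T^*)$. This is well defined, since $T^*f=0$ forces $S^*f=0$, and it is a contraction.
\item Extend $W_0$ by continuity to $\overline{\mathcal{R}(T^*)}$ and by $0$ on $\mathcal{N}(T)=\mathcal{R}(T^*)^\perp$. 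Call the resulting contraction $W$; it satisfies $WT^*\subseteq S^*$, hence $WT^*= S^*|_{\mathcal{D}(T^*)}$.
\item Put $V=W^*$, which is again a contraction.
\end{itemize}
Taking adjoints of $WT^*\subseteq S^*$ gives $S=S^{**}\subseteq (WT^*)^*$. By the product rule in the Preliminaries proposition, with $W$ bounded on the left, $(WT^*)^*=T^{**}W^*=TV$, using that $T$ is closed. So $S\subseteq TV$.

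The main obstacle here is exactly this identity $(WT^*)^*=T^{**}W^*$. The Preliminaries proposition gives equality when the \emph{left} factor is bounded, and here the left factor $W$ is indeed bounded, so the step goes through. I would still check carefully that $\mathcal{D}(WT^*)=\mathcal{D}(T^*)$ is dense. That holds because $T$ is closed and densely defined, so $T^*$ is densely defined.

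For (ii), assume $\mathcal{R}(S)\subseteq\mathcal{R}(T)$. The steps are:
\begin{itemize}
\item For $f\in\mathcal{D}(S)$, let $Vf$ be the unique $g\in\mathcal{D}(T)\cap\mathcal{N}(T)^\perp$ with $Tg=Sf$. It exists because $\mathcal{N}(T)$ is closed and $T$ is closed, so $T$ restricted to $\mathcal{D}(T)\cap\mathcal{N}(T)^\perp$ is injective with the same range. This is $V=T^\dagger S$, so $\mathcal{R}(V)\subseteq\mathcal{R}(T^\dagger)$ and $S=TV$ on $\mathcal{D}(V)=\mathcal{D}(S)$, which is dense.
\item Uniqueness: if $TV_1=TV_2$ with both ranges in $\mathcal{N}(T)^\perp$, then $V_1f-V_2f\in\mathcal{N}(T)\cap\mathcal{N}(T)^\perp=\{0\}$.
\item The estimate: define the operator $L\colon f\mapsto Vf$ from the Hilbert space $(\mathcal{D}(S),\|\cdot\|_{S})$ to $H$, where $\|f\|_S^2=\|f\|^2+\|Sf\|^2$ is the graph norm. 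This space is complete because $S$ is closed. $L$ is everywhere defined on it, and it is closed: if $f_n\to f$ and $Sf_n\to Sf$ while $Vf_n\to g$, then $T(Vf_n)=Sf_n\to Sf$. Closedness of $T$ gives $Tg=Sf$, and $g\in\mathcal{N}(T)^\perp$ since that set is closed, so $g=Vf$. The closed graph theorem of the Preliminaries then gives $\|Vf\|^2\le M(\|f\|^2+\|Sf\|^2)$.
\item If $S$ is bounded, the graph norm is equivalent to $\|\cdot\|$, so $V$ is bounded.
\item If $T$ is bounded, let $f_n\to f$ with $Vf_n\to g$. Then $Sf_n=TVf_n\to Tg$, and closedness of $S$ gives $f\in\mathcal{D}(S)$ with $Sf=Tg$. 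With $g\in\mathcal{N}(T)^\perp$ this yields $g=Vf$, so $V$ is closed.
\end{itemize}
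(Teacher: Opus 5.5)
The paper gives no proof of this result. It is quoted from Douglas, together with the reduced-solution refinement that also appears as Theorem~\ref{banach}.

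Your proof is correct and follows the classical argument. In (i) you build the contraction $W$ with $WT^*\subseteq S^*$ and then take adjoints: $S=S^{**}\subseteq(WT^*)^*=TW^*$, which is legitimate because the bounded factor is on the left. In (ii) you set $V=T^\dagger S$ and apply the closed graph theorem on $(\mathcal{D}(S),\|\cdot\|_S)$.

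One small detail is worth stating in your uniqueness step. Since $\mathcal{R}(V_i)\subseteq\mathcal{D}(T)$, you get $\mathcal{D}(V_i)=\mathcal{D}(TV_i)=\mathcal{D}(S)$, so the two candidates automatically have the same domain and your pointwise comparison settles equality.
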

\begin{theorem}\cite{forough2014majorization}\label{banach}
Suppose $T$ and $S$ are closed densely defined operators on a Banach space $X$ and assume that $T$ has a generalized inverse $T^\dagger$. If $\mathcal{R}(S)\subseteq \mathcal{R}(T)$, then there exists a unique densely defined operator $V$ on $X$ such that $S=TV$ and $\mathcal{R}(V)\subseteq \mathcal{R}(T^\dagger)$. The operator $V$ is called the \emph{reduced solution} of the equation $S=TX$. Moreover, there exists a constant $M\geq0$ such that
\[
\|Vf\|^{2}\leq M\bigl(\|f\|^{2}+\|Sf\|^{2}\bigr), \text{ for } f\in \mathcal{D}(V).
\]
Furthermore, if $S$ is bounded, then $V$ is bounded and if $T$ is bounded, then $V$ is closed.
\end{theorem}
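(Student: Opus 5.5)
The plan is to treat $S=TX$ in a Banach space $X$ through the generalized inverse $T^\dagger$, defining the candidate solution explicitly and then verifying each property in turn. We assume $T^\dagger$ is an algebraic generalized inverse with $TT^\dagger T=T$ and $T^\dagger TT^\dagger=T^\dagger$. Its domain is $\mathcal{D}(T^\dagger)=\mathcal{R}(T)\oplus\mathcal{N}(T^\dagger)$, where $TT^\dagger$ acts as a projection onto $\mathcal{R}(T)$.

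\textbf{Definition of $V$ and the equation $S=TV$.} Since $\mathcal{R}(S)\subseteq\mathcal{R}(T)\subseteq\mathcal{D}(T^\dagger)$, we set
\[
V:=T^\dagger S,\qquad \mathcal{D}(V)=\mathcal{D}(S).
\]
This $V$ is densely defined and satisfies $\mathcal{R}(V)\subseteq\mathcal{R}(T^\dagger)$. For $f\in\mathcal{D}(S)$, write $Sf=Tg$ with $g\in\mathcal{D}(T)$. Then
\[
TVf=TT^\dagger Tg=Tg=Sf,
\]
so $S=TV$ with equal domains.

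\textbf{Uniqueness.} Suppose $V_1,V_2$ both solve $S=TV_i$ with $\mathcal{R}(V_i)\subseteq\mathcal{R}(T^\dagger)$; we read $S=TV_i$ as including $\mathcal{D}(V_i)=\mathcal{D}(S)$. Then $T(V_1-V_2)f=0$ and $(V_1-V_2)f\in\mathcal{R}(T^\dagger)$. On $\mathcal{R}(T^\dagger)$ we have $T^\dagger T=I$: if $x=T^\dagger y$, then $T^\dagger Tx=T^\dagger TT^\dagger y=T^\dagger y=x$. Hence $\mathcal{R}(T^\dagger)\cap\mathcal{N}(T)=\{0\}$, which forces $V_1=V_2$.

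\textbf{The estimate and closedness when $T$ is bounded.} We consider $T_0=T|_{\mathcal{D}(T)\cap\mathcal{R}(T^\dagger)}$, which is injective with range $\mathcal{R}(T)$ and satisfies $V=T_0^{-1}S$. The key claim is that $T_0$ is closed, using that $T$ is closed and that $\mathcal{R}(T^\dagger)$ is a closed complement of $\mathcal{N}(T)$. Granting this, $T_0^{-1}$ is closed, so $V=T_0^{-1}S$ is closed as an operator on the Banach space $\mathcal{D}(S)$ equipped with the graph norm of $S$. This requires checking that $S$ is closed, which holds by hypothesis, so that this graph-norm space is complete. Since $V$ is defined on the whole of this space, the closed graph theorem (stated in the preliminaries) gives
\[
\|Vf\|\le C(\|f\|+\|Sf\|),
\]
and squaring yields the estimate with $M=2C^2$. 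If $S$ is bounded, the graph norm is equivalent to $\|\cdot\|$, so $V$ is bounded. If $T$ is bounded, take $f_n\to f$ with $Vf_n\to g$. Then $Sf_n=TVf_n\to Tg$, and closedness of $S$ gives $f\in\mathcal{D}(S)$ with $Sf=Tg$. Since $\mathcal{R}(T^\dagger)$ is closed, $g\in\mathcal{R}(T^\dagger)$, so $Vf=T_0^{-1}Tg=g$, and $V$ is closed.

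\textbf{Main obstacle.} The hardest step is showing that $T_0^{-1}$ is closed, i.e.\ that $\mathcal{R}(T^\dagger)$ is closed and $T_0$ is closed. This is where the precise notion of ``generalized inverse'' in a Banach space matters. I would first take as part of the definition a topological decomposition $X=\mathcal{N}(T)\oplus\mathcal{R}(T^\dagger)$ with a bounded projection $P$. Then $T_0=T(I-P)|$ is closed, because $P$ is bounded and $T$ is closed, and $T_0^{-1}$ is closed as the inverse of a closed injective operator.
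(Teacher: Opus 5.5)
\textbf{Gap: your closedness argument only covers bounded $T$.} The gap is at the step you flag as the main obstacle. To handle it you postulate a topological decomposition $X=\mathcal{N}(T)\oplus\mathcal{R}(T^\dagger)$. Your own identities force $\mathcal{R}(T^\dagger)\subseteq\mathcal{D}(T)$, since this is needed for $T^\dagger TT^\dagger=T^\dagger$ to make sense. Also $\mathcal{N}(T)\subseteq\mathcal{D}(T)$ trivially. So this decomposition gives $\mathcal{D}(T)=X$, and the closed graph theorem then makes $T$ bounded. As written, your proof of the estimate and of the bounded-$S$ case therefore says nothing about an unbounded closed $T$, which is the actual content of the statement. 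In general one only has $\mathcal{D}(T)=\mathcal{N}(T)\dotplus\mathcal{R}(T^\dagger)$ (write $x=T^\dagger Tx+(x-T^\dagger Tx)$), and $\mathcal{R}(T^\dagger)$ is typically not closed in $X$.

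You are right that the purely algebraic identities are not enough. If $T$ is bounded and $\mathcal{R}(T^\dagger)$ is a non-closed algebraic complement of $\mathcal{N}(T)$, then $S=T$ gives $V=T^\dagger T$, an unbounded projection, although $S$ is bounded. But the topological hypothesis must be placed on a closed subspace containing $\mathcal{R}(T^\dagger)$, not on $\mathcal{R}(T^\dagger)$ itself.

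\textbf{The repair.} Use the standard notion of generalized inverse for a closed densely defined operator on a Banach space. There are closed subspaces $W,W'$ with $X=\mathcal{N}(T)\oplus W$ and $X=\overline{\mathcal{R}(T)}\oplus W'$ topologically. One sets $\mathcal{D}(T^\dagger)=\mathcal{R}(T)\dotplus W'$, with $T^\dagger=(T|_{\mathcal{D}(T)\cap W})^{-1}$ on $\mathcal{R}(T)$ and $T^\dagger=0$ on $W'$, so $\mathcal{R}(T^\dagger)=\mathcal{D}(T)\cap W$. Your $T_0$ is then $T|_{\mathcal{D}(T)\cap W}$, and it is closed for an elementary reason. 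If $x_n\in\mathcal{D}(T)\cap W$, $x_n\to x$ and $Tx_n\to y$, closedness of $T$ gives $x\in\mathcal{D}(T)$ and $Tx=y$, and closedness of $W$ gives $x\in W$. Neither a bounded projection onto $\mathcal{R}(T^\dagger)$ nor closedness of $\mathcal{R}(T^\dagger)$ is needed.

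With this in place, your closed-graph argument on $\mathcal{D}(S)$ with the norm $\|f\|+\|Sf\|$ goes through verbatim. It yields both the estimate and the bounded-$S$ case. In the last part, the step $g\in\mathcal{R}(T^\dagger)$ is now justified because $\mathcal{D}(T)=X$ makes $\mathcal{R}(T^\dagger)=W$ closed. This is exactly where boundedness of $T$ is used; it is not a general property of $T^\dagger$.

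Existence via $V=T^\dagger S$ and uniqueness via $\mathcal{R}(T^\dagger)\cap\mathcal{N}(T)=\{0\}$ are fine. Note only that $\mathcal{D}(V_i)=\mathcal{D}(S)$ is a consequence of $S=TV_i$ together with $\mathcal{R}(V_i)\subseteq\mathcal{D}(T)$, not an extra convention.

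The paper only cites this theorem and gives no proof, so there is no internal argument to compare routes with. Once repaired, your route through $T^\dagger S$ and the closed graph theorem is the natural one.
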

\begin{defn}\cite{1,2002,7}
Let $F=\{f_n\}_{n\in\mathbb{N}}$ be a sequence in a separable Hilbert space $H$.
\begin{enumerate}[(i)]
\item $F$ is \emph{complete} if
\[
\overline{\operatorname{span}\{f_n:n\in\mathbb{N}\}}=H.
\]
\item $F$ is a \emph{frame} for $H$ if there are constants $0<a\leq b<\infty$ such that
\[
a\|f\|^2\leq\sum_{n=1}^{\infty}|\langle f,f_n\rangle|^2\leq b\|f\|^2,
\qquad f\in H.
\]
\item $F$ is a \emph{Bessel sequence} if the upper estimate above holds for some $b>0$.
\item $F$ is an \emph{upper semi-frame} if it is complete and Bessel; equivalently,
\[
0<\sum_{n=1}^{\infty}|\langle f,f_n\rangle|^2\leq b\|f\|^2,
\qquad 0\neq f\in H,
\]
for some $b>0$.
\item $F$ is a \emph{lower semi-frame} if there is $a>0$ such that
\[
a\|f\|^2\leq\sum_{n=1}^{\infty}|\langle f,f_n\rangle|^2,
\qquad f\in H.
\]
\end{enumerate}
\end{defn}

For any given sequence $F = \{f_n\}_{n \in \mathbb{N}} \subseteq H$, we can associate the following three operators:
\begin{enumerate}[(i)]
\item The analysis operator $C_F: \mathcal{D}(C_F) \subseteq H \rightarrow \ell^2(\mathbb{N})$ is defined by \begin{eqnarray*}
C_Ff &=& \{\langle f, f_n \rangle\}, \hspace*{0.3cm} \text{ for all } f \in \mathcal{D}(C_F) \\\mathcal{D}(C_F) &=& \{f \in H : \sum_{n=1}^{\infty} | \langle f, f_n \rangle |^2 < \infty \}.
\end{eqnarray*}
\item The synthesis operator $D_F: \mathcal{D}(D_F) \subseteq \ell^2(\mathbb{N}) \rightarrow H$ is defined by
\begin{eqnarray*}
D_F(\{c_n\}_{n \in \mathbb{N}}) &=& \sum_{n=1}^{\infty} c_nf_n, \hspace*{0.3cm} \text{ for all } \{c_n\} \in \mathcal{D}(D_F)\\
\mathcal{D}(D_F) &=& \Big\{\{c_n\}_{n \in \mathbb{N}} \in \ell^2(\mathbb{N}) : \sum_{n=1}^{\infty} c_nf_n \text{ is convergent in }H \Big\}.
\end{eqnarray*}

\item The frame operator $S_F: \mathcal{D}(S_F) \subseteq H \rightarrow H $ is defined by
\begin{eqnarray*}
S_Ff &=& \sum_{n=1}^{\infty} \langle f, f_n \rangle f_n, \text{ for all } f \in \mathcal{D}(S_F)\\
\mathcal{D}(S_F) &=& \Big\{f \in H : \sum_{n=1}^{\infty} \langle f, f_n \rangle f_n \text{ is convergent in }H \Big\}.
\end{eqnarray*}
\end{enumerate}
\section{The $A$-Adjoint Operator}

\begin{defn}
Let $A\in\mathcal{B}(H)^+$. A linear operator $T:\mathcal{D}(T)\subseteq H\to H$ is called \emph{$A$-bounded} if there is a constant $c\geq0$ such that
\[
\|Tf\|_A\leq c\|f\|_A,
\qquad f\in\mathcal{D}(T).
\]
The least admissible constant is denoted by $\|T\|_A$.
\end{defn}

\begin{defn}
Let $A\in\mathcal{B}(H)^+$ and let $T:\mathcal{D}(T)\subseteq H\to H$ be linear. The operator $T$ is \emph{$A$-closed} if, whenever $\{f_k\}_{k\in\mathbb{N}}\subseteq\mathcal{D}(T)\cap\overline{\mathcal{R}(A)}$ satisfies
\[
\|f_k-f\|_A\to0,
\qquad
\|Tf_k-g\|_A\to0
\]
for some $f,g\in H$, one has $f\in\mathcal{D}(T)$ and $Tf-g\in\mathcal{N}(A)$.
\end{defn}

\begin{defn}
Let $A\in\mathcal{B}(H)^+$, and let $T:\mathcal{D}(T)\subseteq H\to H$ be densely defined and closable. Assume that
\[
\mathcal{R}(T^*A)\subseteq\mathcal{R}(A).
\]
Then by Theorems \ref{douglas} and \ref{banach}, we can conclude that there exists an unique closed operator $T^\#$ such that $AT^\# = T^*A.$ Then, the \emph{$A$-adjoint} of $T$ is the operator $T^{\#}$ defined by
\[
\mathcal{D}(T^{\#})
=
\left\{
f\in H:
\begin{array}{l}
\text{there is a unique }g_f\in\overline{\mathcal{R}(A)}\text{ such that}\\[-1mm]
\langle Th,f\rangle_A=\langle h,g_f\rangle_A
\text{ for every }h\in\mathcal{D}(T)
\end{array}
\right\},
\]
and $T^{\#}f=g_f$ for $f\in\mathcal{D}(T^{\#})$.
\end{defn}

Set
\[
\mathcal{L}_A(H)=\{T\in\mathcal{L}(H):T^{\#}\text{ exists}\}.
\]
In general, $\mathcal{L}_A(H)$ is not a linear space. The next example shows that two operators may each admit a $A$-adjoint while their sum does not.

\begin{example}
The class $\mathcal{L}_A(H)$ need not be closed under addition. Let $H=\ell^2(\mathbb{N})$ and
\[
A=\operatorname{diag}\left(1,\frac12,\frac13,\ldots\right).
\]
Then $A$ is positive and injective, and $\mathcal{R}(A)$ is dense but not closed. Its inverse
\[
A^{-1}=\operatorname{diag}(1,2,3,\ldots),
\qquad
\mathcal{D}(A^{-1})=\mathcal{R}(A),
\]
is an unbounded self-adjoint operator. By Theorem~\ref{von}, there is a unitary operator $U$ such that
\[
\mathcal{D}(U^*A^{-1}U)\cap\mathcal{D}(A^{-1})=\{0\}.
\]
Set $X=U^*A^{-1}U$. Then $X$ is self-adjoint, closed, and densely defined, and
\[
\mathcal{D}(X)\cap\mathcal{R}(A)=\{0\}.
\]
Let $T=X^*$, so that $T$ is closed and densely defined and $T^*=X$.

Choose $f_0\in H\setminus\mathcal{R}(A)$ and $g_0\in H$ with $Ag_0\neq0$. Define the bounded rank-one operator
\[
Zf=\langle f,g_0\rangle f_0,
\qquad f\in H,
\]
and put $Y=Z-X$ and $S=Y^*$. Since $Z$ is bounded, $Y$ is closed and densely defined with $\mathcal{D}(Y)=\mathcal{D}(X)$, while $S=Z^*-T$.

Because $\mathcal{D}(X)\cap\mathcal{R}(A)=\{0\}$,
\[
\mathcal{D}(T^*A)
=\{f\in H:Af\in\mathcal{D}(X)\}
=\{0\},
\]
and hence $\mathcal{R}(T^*A)=\{0\}\subseteq\mathcal{R}(A)$. The same argument, using $\mathcal{D}(Y)=\mathcal{D}(X)$, yields
\[
\mathcal{D}(S^*A)=\{0\},
\qquad
\mathcal{R}(S^*A)=\{0\}\subseteq\mathcal{R}(A).
\]
Thus both $T$ and $S$ admit $A$-adjoints.

On $\mathcal{D}(T)\cap\mathcal{D}(S)$, one has $T+S=Z^*$. Since $Z^*$ is bounded, $(T+S)^*=Z$. Consequently,
\begin{align*}
(T+S)^*A(Ag_0)
&=ZA^2g_0\\
&=\langle A^2g_0,g_0\rangle f_0\\
&=\|Ag_0\|^2f_0.
\end{align*}
The last vector does not belong to $\mathcal{R}(A)$ because $Ag_0\neq0$ and $f_0\notin\mathcal{R}(A)$. Therefore,
\[
\mathcal{R}((T+S)^*A)\nsubseteq\mathcal{R}(A),
\]
so $T+S$ has no $A$-adjoint.
\end{example}

\begin{prop}[Cauchy--Schwarz inequality]
Let $A\in\mathcal{B}(H)^+$. Then
\[
|\langle f,g\rangle_A|\leq\|f\|_A\|g\|_A,
\qquad f,g\in H.
\]
\end{prop}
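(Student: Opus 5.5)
The plan is the standard one: view $\langle\cdot,\cdot\rangle_A$ as a positive semidefinite sesquilinear form and run the discriminant argument, which never needs definiteness. Linearity in the first slot and conjugate symmetry come from the corresponding properties of $\langle\cdot,\cdot\rangle$ together with self-adjointness of $A$: $\langle f,g\rangle_A=\langle Af,g\rangle=\langle f,Ag\rangle=\overline{\langle Ag,f\rangle}=\overline{\langle g,f\rangle_A}$. Positivity $\langle h,h\rangle_A=\langle Ah,h\rangle\geq0$ holds because $A\in\mathcal{B}(H)^+$.

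Next fix $f,g\in H$ and pick a unimodular $\theta$ with $\theta\langle f,g\rangle_A=|\langle f,g\rangle_A|$. For real $t$, expanding by sesquilinearity gives
\[
0\leq\|\theta f+tg\|_A^2=\|f\|_A^2+2t\,|\langle f,g\rangle_A|+t^2\|g\|_A^2 .
\]
This is a nonnegative real polynomial in $t$, and I will split into two cases according to $\|g\|_A$.

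If $\|g\|_A>0$, the quadratic is nonnegative for all real $t$, so its discriminant is nonpositive: $4|\langle f,g\rangle_A|^2-4\|f\|_A^2\|g\|_A^2\leq0$. This is the claim.

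The only step needing care is the degenerate case $\|g\|_A=0$, which can happen for $g\neq0$ when $A$ is not injective. Here the quadratic collapses to the affine function $\|f\|_A^2+2t|\langle f,g\rangle_A|$. This is nonnegative for all $t$ only if $\langle f,g\rangle_A=0$, so the inequality holds trivially. An alternative route avoids cases entirely: since $A\geq0$, write $\langle f,g\rangle_A=\langle A^{1/2}f,A^{1/2}g\rangle$ and $\|f\|_A=\|A^{1/2}f\|$, then apply the ordinary Hilbert-space Cauchy--Schwarz inequality. I would mention this as the one-line proof.
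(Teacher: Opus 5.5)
Your argument is correct and is essentially the paper's proof: the paper also expands $\|f-\lambda g\|_A^2\geq0$ and substitutes the minimizer $\lambda=\langle f,g\rangle_A/\|g\|_A^2$, which is equivalent to your discriminant condition, and it treats the degenerate case separately. The only small difference is in that degenerate case. The paper notes that $g\in\mathcal{N}(A)$ forces $\langle f,g\rangle_A=\langle f,Ag\rangle=0$, tacitly using $\|g\|_A=0\Rightarrow Ag=0$; your affine-function argument instead gets $\langle f,g\rangle_A=0$ directly from positivity. Your $A^{1/2}$ remark is also a valid one-line alternative.
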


\begin{proof}
If $g\in\mathcal{N}(A)$, then $\langle f,g\rangle_A=0$ for every $f\in H$. Assume that $\|g\|_A>0$. Positivity gives
\[
0\leq\|f-\lambda g\|_A^2
=\|f\|_A^2-\overline{\lambda}\langle f,g\rangle_A
-\lambda\langle g,f\rangle_A+|\lambda|^2\|g\|_A^2.
\]
Taking $\lambda=\langle f,g\rangle_A/\|g\|_A^2$ yields
\[
0\leq\|f\|_A^2-
\frac{|\langle f,g\rangle_A|^2}{\|g\|_A^2},
\]
which is the desired estimate.
\end{proof}

\begin{theorem}[Riesz representation in the semi-Hilbert setting]
Let $\phi:H\to\mathbb{C}$ be linear. If there is $C>0$ such that
\[
|\phi(f)|\leq C\|f\|_A,
\qquad f\in H,
\]
then there is a unique $g\in\overline{\mathcal{R}(A)}$ satisfying
\[
\phi(f)=\langle f,g\rangle_A,
\qquad f\in H.
\]
\end{theorem}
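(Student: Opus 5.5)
The plan is to reduce to the classical Riesz representation theorem via the square root $A^{1/2}$. Uniqueness is immediate. If $g_1,g_2\in\overline{\mathcal{R}(A)}$ both represent $\phi$, then $\langle f,A(g_1-g_2)\rangle=0$ for all $f\in H$. Hence $g_1-g_2\in\mathcal{N}(A)\cap\overline{\mathcal{R}(A)}=\{0\}$, since $\mathcal{N}(A)=\mathcal{R}(A)^\perp$ for the self-adjoint operator $A$.

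\emph{Step 1: pass to $A^{1/2}$.} Since $\|f\|_A=\|A^{1/2}f\|$, the bound $|\phi(f)|\le C\|f\|_A$ shows that $\phi$ vanishes on $\mathcal{N}(A^{1/2})=\mathcal{N}(A)$. Therefore
\[
\psi(A^{1/2}f)=\phi(f)
\]
is a well-defined linear functional on $\mathcal{R}(A^{1/2})$, and it satisfies $|\psi(x)|\le C\|x\|$. Extend $\psi$ by continuity to $\overline{\mathcal{R}(A^{1/2})}=\overline{\mathcal{R}(A)}$, and set it equal to $0$ on the orthogonal complement. The classical Riesz theorem then gives a unique $u\in\overline{\mathcal{R}(A)}$ with
\[
\phi(f)=\langle A^{1/2}f,u\rangle=\langle f,A^{1/2}u\rangle,\qquad f\in H.
\]

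\emph{Step 2: produce $g$.} We want $g\in\overline{\mathcal{R}(A)}$ with $\langle f,Ag\rangle=\langle f,A^{1/2}u\rangle$ for all $f$, that is, $Ag=A^{1/2}u$. If $u=A^{1/2}w$, take $g=P_Aw$. This lies in $\overline{\mathcal{R}(A)}$ and satisfies $Ag=Aw=A^{1/2}u$, which proves the claim.

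\emph{Main obstacle.} The hard step is guaranteeing $u\in\mathcal{R}(A^{1/2})$; in general it fails. If $\mathcal{R}(A)$ is closed, then $\mathcal{R}(A^{1/2})=\mathcal{R}(A)$ is closed, $u$ automatically lies in it, and the argument closes. If $\mathcal{R}(A)$ is not closed, pick any $u\in\overline{\mathcal{R}(A)}\setminus\mathcal{R}(A^{1/2})$. Then $\phi(f)=\langle A^{1/2}f,u\rangle$ satisfies the hypothesis with $C=\|u\|$, but a representing $g$ would force $A^{1/2}u\in\mathcal{R}(A)$. Injectivity of $A^{1/2}$ on $\overline{\mathcal{R}(A)}$ would then give $u\in\mathcal{R}(A^{1/2})$, a contradiction.

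So existence genuinely fails for non-closed range. The statement as written is false without an additional assumption, and I would add the hypothesis that $\mathcal{R}(A)$ is closed. With it, the proof is complete. Without it, the most one can assert is the weaker representation $\phi(f)=\langle A^{1/2}f,u\rangle$ with $u\in\overline{\mathcal{R}(A)}$. Equivalently, the conclusion holds for the completion of $(H,\|\cdot\|_A)$, which is the space $\mathcal{R}(A^{1/2})$ with the norm $\|A^{1/2}x\|_{A^{1/2}}$ used in the de Branges--Rovnyak setting.
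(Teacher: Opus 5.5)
Your proposal is correct. The conflict with the statement is an error in the paper, not in your argument.

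\textbf{Where the paper's proof fails.} The paper passes to the completion $H_A$ of $H/\mathcal{N}(A)$ and takes the Riesz vector $u\in H_A$. It then asserts that ``under the canonical realization of $H_A$ in $\overline{\mathcal{R}(A)}$'' some $g\in\overline{\mathcal{R}(A)}$ corresponds to $u$ with $\phi(f)=\langle f,g\rangle_A$. This is your Step~1 in abstract form. The realization is $[f]\mapsto A^{1/2}f$, so $u$ becomes some $v\in\overline{\mathcal{R}(A)}$ with $\phi(f)=\langle A^{1/2}f,v\rangle$. Rewriting this as $\langle A^{1/2}f,A^{1/2}g\rangle=\langle f,g\rangle_A$ requires exactly $v\in\mathcal{R}(A^{1/2})$, the obstacle you isolated. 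The paper skips this step silently, and your counterexample shows it cannot be repaired.

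\textbf{A concrete instance.} Take $H=\ell^2$, $A=\operatorname{diag}(n^{-2})$ and $\phi(f)=\sum_n f_n n^{-2}$. Cauchy--Schwarz gives $|\phi(f)|\leq(\pi/\sqrt6)\,\|f\|_A$. But $\phi(e_n)=n^{-2}$ while $\langle e_n,g\rangle_A=n^{-2}\overline{g_n}$, which forces $g_n=1$ for all $n$, impossible in $\ell^2$.

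\textbf{The closed-range case.} Under your added hypothesis that $\mathcal{R}(A)$ is closed, both your $A^{1/2}$ argument and the paper's completion argument are valid. In that case $\|\cdot\|_A$ is equivalent to $\|\cdot\|$ on $\overline{\mathcal{R}(A)}$, so $H/\mathcal{N}(A)$ is already complete. Your explicit route has the advantage of making the exact obstruction visible.

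\textbf{Points to tighten.}
\begin{itemize}
\item You should justify that $\overline{\mathcal{R}(A)}\setminus\mathcal{R}(A^{1/2})\neq\emptyset$ when $\mathcal{R}(A)$ is not closed. Since $\overline{\mathcal{R}(A^{1/2})}=\overline{\mathcal{R}(A)}$, closedness of $\mathcal{R}(A^{1/2})$ would make $A^{1/2}$ a bounded bijection of $\overline{\mathcal{R}(A)}$ onto itself, hence invertible there. Then $A=(A^{1/2})^2$ would be invertible there too, so $\mathcal{R}(A)$ would be closed.
\item Your contradiction never uses $g\in\overline{\mathcal{R}(A)}$, because $A^{1/2}g\in\overline{\mathcal{R}(A)}$ for every $g\in H$. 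So no representer exists anywhere in $H$.
\item The defect propagates. The paper's next proposition, that every $A$-bounded operator has an $A$-adjoint, is proved from this theorem and fails as well. In the example above, $Tf=\phi(f)e_1$ is $A$-bounded, since $\|Tf\|_A=|\phi(f)|$. But $\langle Tf,e_1\rangle_A=\phi(f)$ has no representer, so $T^{\#}e_1$ cannot exist.
\end{itemize}
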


\begin{proof}
The assumed estimate implies $\mathcal{N}(A)\subseteq\mathcal{N}(\phi)$. Hence
\[
\widetilde\phi\bigl(f+\mathcal{N}(A)\bigr)=\phi(f)
\]
defines a linear functional on $H/\mathcal{N}(A)$. It is well defined because two representatives of the same coset differ by an element of $\mathcal{N}(A)$. Moreover,
\[
\left|\widetilde\phi\bigl(f+\mathcal{N}(A)\bigr)\right|
\leq C\|f\|_A,
\]
so $\widetilde\phi$ extends continuously to the completion $H_A$ of $H/\mathcal{N}(A)$.

By the classical Riesz representation theorem, there is a unique $u\in H_A$ such that
\[
\widetilde\phi(\xi)=\langle\xi,u\rangle_{H_A},
\qquad \xi\in H_A.
\]
Under the canonical realization of $H_A$ in $\overline{\mathcal{R}(A)}$, let $g\in\overline{\mathcal{R}(A)}$ correspond to $u$. Then
\[
\phi(f)=\widetilde\phi\bigl(f+\mathcal{N}(A)\bigr)
=\langle f,g\rangle_A,
\qquad f\in H.
\]

For uniqueness, suppose that $g_1,g_2\in\overline{\mathcal{R}(A)}$ both represent $\phi$. Then
\[
\langle f,g_1-g_2\rangle_A=0,
\qquad f\in H.
\]
Taking $f=g_1-g_2$ gives $g_1-g_2\in\mathcal{N}(A)$. Since
\[
\mathcal{N}(A)\cap\overline{\mathcal{R}(A)}=\{0\},
\]
one obtains $g_1=g_2$.
\end{proof}

\begin{prop}
Let $T:H\to H$ be an $A$-bounded operator. Then the following statements hold.
\begin{enumerate}
\item $T$ admits a unique $A$-adjoint $T^{\#}$.
\item The operator $T^{\#}$ is $A$-bounded. Moreover, $\|T\|_{A}=\|T^{\#}\|_{A}$.
\end{enumerate}
\end{prop}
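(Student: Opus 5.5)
For part (1) I will use the semi-Hilbert Riesz representation theorem proved just above. Fix $f\in H$ and define the linear functional $\phi_f(h)=\langle Th,f\rangle_A$ for $h\in H$.

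By the Cauchy--Schwarz inequality and $A$-boundedness of $T$,
\[
|\phi_f(h)|\leq \|Th\|_A\|f\|_A\leq \|T\|_A\|f\|_A\,\|h\|_A .
\]
If $\|f\|_A>0$, the Riesz theorem applies with $C=\|T\|_A\|f\|_A$. If $\|f\|_A=0$, then $\phi_f\equiv 0$ and $g_f=0$ is the representative. (A degenerate constant $C=0$ is harmless; it can be replaced by any $C>0$.)

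So the Riesz theorem yields a unique $g_f\in\overline{\mathcal{R}(A)}$ with $\langle Th,f\rangle_A=\langle h,g_f\rangle_A$ for all $h\in H$. Hence every $f\in H$ lies in $\mathcal{D}(T^{\#})$, and $T^{\#}f=g_f$.

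Linearity of $f\mapsto g_f$ follows from uniqueness. The vector $\alpha g_{f_1}+\beta g_{f_2}$ lies in $\overline{\mathcal{R}(A)}$ and represents $\phi_{\alpha f_1+\beta f_2}$, so it must equal $g_{\alpha f_1+\beta f_2}$.

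Uniqueness of the adjoint itself comes from $\mathcal{N}(A)\cap\overline{\mathcal{R}(A)}=\{0\}$, exactly as in the Riesz proof. I will also note the consistency with the definition: $AT^{\#}=T^*A$ wherever $T^*$ makes sense, since $\langle h,AT^{\#}f\rangle=\langle ATh,f\rangle$.

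For part (2), I take $h=T^{\#}f$ in the defining identity:
\[
\|T^{\#}f\|_A^2=\langle T^{\#}f,T^{\#}f\rangle_A=\langle TT^{\#}f,f\rangle_A\leq \|T\|_A\|T^{\#}f\|_A\|f\|_A .
\]
If $\|T^{\#}f\|_A>0$, dividing gives $\|T^{\#}f\|_A\leq\|T\|_A\|f\|_A$. Otherwise the bound is trivial. So $T^{\#}$ is $A$-bounded with $\|T^{\#}\|_A\leq\|T\|_A$.

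For the reverse inequality, I use the identity $\langle Th,f\rangle_A=\langle h,T^{\#}f\rangle_A$, conjugated, with $f=Th$:
\[
\|Th\|_A^2=\langle h,T^{\#}Th\rangle_A\leq \|h\|_A\,\|T^{\#}\|_A\,\|Th\|_A .
\]
This gives $\|T\|_A\leq\|T^{\#}\|_A$, and the two inequalities together give equality.

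The only delicate point is the hypothesis of the Riesz theorem. The functional must satisfy a bound by $\|h\|_A$, not merely by $\|h\|$, which is exactly why $A$-boundedness, rather than ordinary boundedness, is needed. I also need to confirm that $\|T\|_A$ is attained as an admissible constant, which holds since the admissible constants form a closed set. The closability and density assumptions in the definition are automatic here because $\mathcal{D}(T)=H$.
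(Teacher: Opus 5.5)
Your argument follows the paper's: part (1) applies the semi-Hilbert Riesz theorem to $h\mapsto\langle Th,f\rangle_A$, and your substitutions $h=T^{\#}f$ and $f=Th$ in part (2) are an explicit version of the paper's duality estimate. The genuine gap is the Riesz step, and the paper's proof has the same gap, because that theorem fails whenever $\mathcal{R}(A)$ is not closed.

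The space $H/\mathcal{N}(A)$ with $\|\cdot\|_A$ is isometric to $\mathcal{R}(A^{1/2})$ via $[f]\mapsto A^{1/2}f$. So it is complete exactly when $\mathcal{R}(A)$ is closed. Otherwise the Riesz representer lives in the completion $H_A$ and need not be the class of any vector of $H$.

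Concretely, let $H=\ell^2$, $A=\operatorname{diag}(1,\frac14,\frac19,\ldots)$ and $u=(n^{-2})_{n\in\mathbb{N}}$. The functional $\phi(h)=\langle h,u\rangle=\sum_n (h_n/n)(1/n)$ satisfies $|\phi(h)|\leq\frac{\pi}{\sqrt{6}}\|h\|_A$. But $\phi(h)=\langle h,g\rangle_A=\langle h,Ag\rangle$ for all $h$ would force $Ag=u$, i.e.\ $g_n=1$ for every $n$, which is not in $\ell^2$.

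Now take the bounded operator $Th=\langle h,u\rangle e_1$. Since $Ae_1=e_1$, you get $\|Th\|_A=|\phi(h)|\leq\frac{\pi}{\sqrt{6}}\|h\|_A$, so $T$ is $A$-bounded. Yet $\langle Th,e_1\rangle_A=\phi(h)$ has no representer, so $e_1\notin\mathcal{D}(T^{\#})$; equivalently $T^*Ae_1=u\notin\mathcal{R}(A)$. Thus statement (1) is false in the paper's generality, and no argument along these lines can close without an extra hypothesis.

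There are two repairs. You can assume $\mathcal{R}(A)$ is closed; then the Riesz theorem is valid, since $H/\mathcal{N}(A)$ is already complete. Or you can assume $\mathcal{R}(T^*A)\subseteq\mathcal{R}(A)$. In the latter case, your Cauchy--Schwarz estimate together with $\|h\|_A\leq\|A\|^{1/2}\|h\|$ shows that $Af\in\mathcal{D}(T^*)$ for every $f\in H$. So you can write $T^*Af=Ag_f$ with $g_f\in\overline{\mathcal{R}(A)}$ (replace any solution by its $P_A$-projection), and then $\langle Th,f\rangle_A=\langle h,T^*Af\rangle=\langle h,g_f\rangle_A$. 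Once $T^{\#}$ is everywhere defined, your linearity, uniqueness and part (2) arguments go through as written.

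Separately, delete the closing claim that closability is automatic because $\mathcal{D}(T)=H$. By the closed graph theorem an everywhere-defined closable operator is bounded, and an $A$-bounded operator need not be. For example, take $A=\operatorname{diag}(1,0,0,\ldots)$ and $Th=\psi(h)e_2$ with $\psi$ a discontinuous linear functional; then $\|Th\|_A=0$. Your argument never uses closability, so this remark is wrong but not load-bearing.
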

\begin{proof}
(1) Let $M\geq0$ satisfy
\[
\|Tf\|_A\leq M\|f\|_A,
\qquad f\in H.
\]
For fixed $g\in H$, define
$\varphi_g(f)=\langle Tf,g\rangle_A$. The Cauchy--Schwarz inequality gives
\[
|\varphi_g(f)|
\leq M\|f\|_A\|g\|_A.
\]
By the semi-Hilbert-space Riesz representation theorem, there is a unique
$f_g\in\overline{\mathcal{R}(A)}$ such that
\[
\langle Tf,g\rangle_A=\langle f,f_g\rangle_A,
\qquad f\in H.
\]
Define $T^{\#}g=f_g$. This gives the unique $A$-adjoint of $T$.

(2) The construction yields
\[
\|T^{\#}g\|_A\leq M\|g\|_A,
\qquad g\in H,
\]
so $\|T^{\#}\|_A\leq\|T\|_A$. Conversely, for $f\in H$,
\begin{align*}
\|Tf\|_A
&=\sup_{\|g\|_A\leq1}|\langle Tf,g\rangle_A|\\
&=\sup_{\|g\|_A\leq1}|\langle f,T^{\#}g\rangle_A|\\
&\leq\|T^{\#}\|_A\|f\|_A.
\end{align*}
Therefore $\|T\|_A\leq\|T^{\#}\|_A$, and equality follows.
\end{proof}
\begin{example}
A densely defined $A$-bounded operator need not admit a $A$-adjoint. Let
$H=\ell^2$ and
\[
A=\operatorname{diag}(1,0,0,\ldots).
\]
Thus, $\|f\|_A=|\xi_1|$ for $f=(\xi_n)_{n\in\mathbb{N}}\in\ell^2$. Set
$\mathcal{D}(T)=c_{00}$ and define
\[
Tf=\left(0,\sum_{n=1}^{\infty}n\xi_n,0,0,\ldots\right),
\qquad f=(\xi_n)_{n\in\mathbb{N}}\in c_{00}.
\]
The sum is finite on $c_{00}$, so $T$ is well defined and densely defined. Since the first coordinate of $Tf$ is zero,
\[
\|Tf\|_A=0\leq\|f\|_A,
\qquad f\in\mathcal{D}(T),
\]
and hence $T$ is $A$-bounded.

For $k\in\mathbb{N}$, let $f^{(k)}=k^{-1}e_k$. Then $f^{(k)}\to0$ in $\ell^2$, whereas
\[
Tf^{(k)}=e_2
\qquad\text{for every }k\in\mathbb{N}.
\]
Therefore, $T$ is not closable. In particular, $A$-boundedness on a dense domain does not by itself guarantee the existence of a $A$-adjoint.
\end{example}
\begin{theorem}\label{bound}
Let $T:\mathcal{D}(T)\subseteq H\to H$ be a densely defined closable operator. Suppose that the $A$-adjoint $T^{\#}$ exists and is densely defined. Then $T$ is $A$-bounded if and only if $T^{\#}$ is $A$-bounded. Moreover, their $A$-bounds are equal, that is, $\|T\|_A=\|T^{\#}\|_A$.
\end{theorem}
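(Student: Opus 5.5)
We argue by duality through the defining relation of the $A$-adjoint, in the spirit of part (2) of the preceding proposition but now on domains. By definition, for $f\in\mathcal{D}(T^{\#})$ and $h\in\mathcal{D}(T)$ we have
\[
\langle Th,f\rangle_A=\langle h,T^{\#}f\rangle_A .
\]
The plan is to express each of $\|Th\|_A$ and $\|T^{\#}f\|_A$ as a supremum of $|\langle\cdot,\cdot\rangle_A|$ over a suitable dense set of test vectors. Then we transfer the bound through this identity. The Cauchy--Schwarz inequality gives the easy halves of the estimates.

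\emph{Step 1 ($T$ $A$-bounded $\Rightarrow$ $T^{\#}$ $A$-bounded).} Fix $f\in\mathcal{D}(T^{\#})$. For $h\in\mathcal{D}(T)$, Cauchy--Schwarz gives
\[
|\langle h,T^{\#}f\rangle_A|=|\langle Th,f\rangle_A|\le\|T\|_A\|h\|_A\|f\|_A .
\]
Next we pass from $h\in\mathcal{D}(T)$ to arbitrary $h\in H$. Since $\mathcal{D}(T)$ is dense in $H$ and $A$ is bounded, $\|\cdot\|_A\le\|A\|^{1/2}\|\cdot\|$. Hence both sides are continuous in $h$ for the $H$-norm, and the inequality extends to all $h\in H$. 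Choosing $h=T^{\#}f$ gives $\|T^{\#}f\|_A^2\le\|T\|_A\|T^{\#}f\|_A\|f\|_A$, so $\|T^{\#}f\|_A\le\|T\|_A\|f\|_A$. (If $\|T^{\#}f\|_A=0$, there is nothing to prove.) This yields $\|T^{\#}\|_A\le\|T\|_A$.

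\emph{Step 2 (converse).} Fix $h\in\mathcal{D}(T)$ and $f\in\mathcal{D}(T^{\#})$. Symmetrically,
\[
|\langle Th,f\rangle_A|=|\langle h,T^{\#}f\rangle_A|\le\|T^{\#}\|_A\|h\|_A\|f\|_A .
\]
Here we use the hypothesis that $T^{\#}$ is densely defined. The map $f\mapsto\langle Th,f\rangle_A$ is $H$-continuous, so the estimate extends from the dense set $\mathcal{D}(T^{\#})$ to all $f\in H$. Taking $f=Th$ gives $\|Th\|_A\le\|T^{\#}\|_A\|h\|_A$, hence $\|T\|_A\le\|T^{\#}\|_A$. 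Combining the two steps gives equality of the $A$-bounds.

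The main subtlety is the density extension in each direction. The test vector we need, $T^{\#}f$ in Step 1 or $Th$ in Step 2, need not lie in the set on which the identity was established. The key observation is that continuity in the Hilbert norm of $H$ suffices, because $A$ is bounded. Thus density of $\mathcal{D}(T)$, respectively of $\mathcal{D}(T^{\#})$, in $H$ is exactly what is required, and no density in the $A$-seminorm is needed. This explains why the statement assumes that $T^{\#}$ is densely defined. Closability of $T$ enters only to guarantee that $T^{\#}$ is well defined via the range condition.
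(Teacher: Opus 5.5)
Your proof is correct and is essentially the paper's argument. The paper writes $\|T^{\#}g\|_A$ and $\|Tf\|_A$ as suprema of $|\langle\cdot,\cdot\rangle_A|$ over the $A$-unit ball of the dense domains $\mathcal{D}(T)$ and $\mathcal{D}(T^{\#})$, using $\|f\|_A\le\|A\|^{1/2}\|f\|$ to pass from Hilbert-space density to $A$-seminorm density; your continuity extension followed by the choice $h=T^{\#}f$, respectively $f=Th$, does exactly this (though that estimate also shows your remark that ``no density in the $A$-seminorm is needed'' is moot, since Hilbert-space density already implies it).
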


\begin{proof}
Since
\[
\|f\|_A\leq\|A\|^{1/2}\|f\|,
\qquad f\in H,
\]
Hilbert-space density of $\mathcal{D}(T)$ and $\mathcal{D}(T^{\#})$ implies
density with respect to the $A$-seminorm.

Assume first that $T$ is $A$-bounded. For $g\in\mathcal{D}(T^{\#})$,
\begin{align*}
\|T^{\#}g\|_A
&=\sup\bigl\{|\langle f,T^{\#}g\rangle_A|:
f\in\mathcal{D}(T),\ \|f\|_A\leq1\bigr\}\\
&=\sup\bigl\{|\langle Tf,g\rangle_A|:
f\in\mathcal{D}(T),\ \|f\|_A\leq1\bigr\}\\
&\leq \|T\|_A\|g\|_A.
\end{align*}
Thus $T^{\#}$ is $A$-bounded and
$\|T^{\#}\|_A\leq\|T\|_A$.

Conversely, suppose that $T^{\#}$ is $A$-bounded. For
$f\in\mathcal{D}(T)$,
\begin{align*}
\|Tf\|_A
&=\sup\bigl\{|\langle Tf,g\rangle_A|:
g\in\mathcal{D}(T^{\#}),\ \|g\|_A\leq1\bigr\}\\
&=\sup\bigl\{|\langle f,T^{\#}g\rangle_A|:
g\in\mathcal{D}(T^{\#}),\ \|g\|_A\leq1\bigr\}\\
&\leq \|T^{\#}\|_A\|f\|_A.
\end{align*}
Hence $T$ is $A$-bounded and
$\|T\|_A\leq\|T^{\#}\|_A$. The two inequalities yield the asserted equality.
\end{proof}

\begin{theorem}
Let $T:\mathcal{D}(T)\subseteq H\to H$ be a densely defined closable operator satisfying $\mathcal{R}(T^*A)\subseteq\mathcal{R}(A)$. Then the following statements hold.
\begin{enumerate}
\item $T^\#$ is a closed operator.
\item $T^\#$ is an $A$-closed operator.
\item If $(T^\#)^\#$ exists, then $P_A\overline T\subseteq(T^\#)^\#$. Equivalently, $A\overline T\subseteq A(T^\#)^\#=(T^\#)^*A$.
\end{enumerate}
If, in addition, $(T^*A)^*=A\overline T$, then $(T^\#)^\#$ exists and $(T^\#)^\#=P_A\overline T$. In particular, $A(T^\#)^\#=A\overline T$.
\end{theorem}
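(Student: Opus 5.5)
The plan is to work throughout with the operator identity $AT^{\#}=T^*A$ and $\mathcal{R}(T^{\#})\subseteq\overline{\mathcal{R}(A)}$, where $\mathcal{D}(T^{\#})=\mathcal{D}(T^*A)=\{f: Af\in\mathcal{D}(T^*)\}$. Since $T$ is closable, $T^*$ is closed and densely defined, and $\overline{T}=T^{**}$.

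\emph{Part (1).} Take $f_k\in\mathcal{D}(T^{\#})$ with $f_k\to f$ and $T^{\#}f_k\to g$ in $H$. Then $Af_k\to Af$, and $T^*Af_k=AT^{\#}f_k\to Ag$. Closedness of $T^*$ gives $Af\in\mathcal{D}(T^*)$ and $T^*Af=Ag$, so $f\in\mathcal{D}(T^{\#})$. Because $\overline{\mathcal{R}(A)}$ is closed, $g\in\overline{\mathcal{R}(A)}$. Since $A$ is injective on $\overline{\mathcal{R}(A)}$, the equation $AT^{\#}f=Ag$ forces $T^{\#}f=g$.

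\emph{Part (2).} Take $f_k\in\mathcal{D}(T^{\#})\cap\overline{\mathcal{R}(A)}$ with $\|f_k-f\|_A\to0$ and $\|T^{\#}f_k-g\|_A\to0$. Only $A^{1/2}$-convergence is available, which is the main obstacle. I would pass to $A^{1/2}f_k\to A^{1/2}f$ and $A^{1/2}T^{\#}f_k\to A^{1/2}g$, then apply $A^{1/2}$ once more to get $Af_k\to Af$ and $T^*Af_k\to Ag$ in norm. Closedness of $T^*$ then gives $Af\in\mathcal{D}(T^*)$ and $T^*Af=Ag$. Hence $f\in\mathcal{D}(T^{\#})$ and $A(T^{\#}f-g)=0$, that is, $T^{\#}f-g\in\mathcal{N}(A)$. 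This is the $A$-closedness required by the definition, so the obstacle dissolves because $A$ factors through $A^{1/2}$.

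\emph{Part (3).} Let $h\in\mathcal{D}(T)$ and $g\in\mathcal{D}(T^{\#})$. Then
\[
\langle T^{\#}g,h\rangle_A=\langle T^*Ag,h\rangle=\langle Ag,Th\rangle=\langle g,Th\rangle_A .
\]
This says $Ah\in\mathcal{D}((T^{\#})^*)$ with $(T^{\#})^*Ah=ATh$. Since $(T^{\#})^*$ is closed, this extends to $h\in\mathcal{D}(\overline T)$ by taking graph limits $h_k\to h$, $Th_k\to\overline Th$; both $Ah_k$ and $ATh_k$ converge. Hence $A\overline T\subseteq (T^{\#})^*A$. Applying the same reasoning to $T^{\#}$ (closed and densely defined by assumption of existence), $(T^{\#})^{\#}$ satisfies $A(T^{\#})^{\#}=(T^{\#})^*A$ with range in $\overline{\mathcal{R}(A)}$. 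For $h\in\mathcal{D}(\overline T)$ we therefore get $h\in\mathcal{D}((T^{\#})^{\#})$ and $A(T^{\#})^{\#}h=A\overline Th=AP_A\overline Th$. Injectivity of $A$ on $\overline{\mathcal{R}(A)}$ yields $(T^{\#})^{\#}h=P_A\overline Th$. Thus $P_A\overline T\subseteq (T^{\#})^{\#}$, and the equivalent formulation is immediate because $AP_A=A$.

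\emph{Additional statement.} Assume $(T^*A)^*=A\overline T$. Since $AT^{\#}=T^*A$ with $A$ bounded, Proposition 2.1(2) gives $(T^{\#})^*A\subseteq (AT^{\#})^*=(T^*A)^*=A\overline T$. Combined with part (3), this yields $(T^{\#})^*A=A\overline T$. Therefore $\mathcal{R}((T^{\#})^*A)=\mathcal{R}(A\overline T)\subseteq\mathcal{R}(A)$, so $(T^{\#})^{\#}$ exists; here I use that $T^{\#}$ is closed by (1), and I would check closability and density as needed. Its domain equals $\{f: Af\in\mathcal{D}((T^{\#})^*)\}=\mathcal{D}(A\overline T)=\mathcal{D}(\overline T)$, and injectivity on $\overline{\mathcal{R}(A)}$ gives $(T^{\#})^{\#}=P_A\overline T$. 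The delicate points are the inclusion $(T^{\#})^*A\subseteq(AT^{\#})^*$, which needs $\mathcal{D}(AT^{\#})$ dense, and density of $\mathcal{D}(T^{\#})$. I would state these as the standing hypotheses implicit in the existence of $(T^{\#})^{\#}$.
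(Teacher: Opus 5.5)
Your proposal is correct and follows the paper's proof essentially step for step: the same closed-graph argument through $T^*$ in (1) and (2), and the same use of injectivity of $A$ on $\overline{\mathcal{R}(A)}$ to identify $(T^\#)^\#$ with $P_A\overline T$. The only variation is in (3), where you verify $A\overline T\subseteq(T^\#)^*A$ directly from $\langle T^\#g,Ah\rangle=\langle g,ATh\rangle$ and the closedness of $(T^\#)^*$, whereas the paper obtains it from $(T^\#)^*A=(AT^\#)^*=(T^*A)^*\supseteq AT^{**}$; both routes need $\mathcal{D}(T^\#)$ to be dense, which you rightly flag as an implicit hypothesis.
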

\begin{proof}
(1) Since $T^\#$ is the $A$-adjoint of $T$, we have $T^\#=A^\dagger T^*A$ and $\mathcal{R}(T^\#)\subseteq\overline{\mathcal{R}(A)}$. Let $\{f_n\}\subseteq\mathcal{D}(T^\#)$ satisfy $f_n\to f$ and $T^\#f_n\to g$ in $H$. Since $A$ is bounded, $Af_n\to Af$ and $AT^\#f_n\to Ag$. Moreover, $AT^\#=T^*A$, and therefore $T^*Af_n\to Ag$. Since $T^*$ is closed, it follows that $Af\in\mathcal{D}(T^*)$ and $T^*Af=Ag$. In particular, $T^*Af\in\mathcal{R}(A)$, so $f\in\mathcal{D}(T^\#)$. Hence $T^\#f=A^\dagger T^*Af=A^\dagger Ag=P_Ag$. Since $T^\#f_n\in\overline{\mathcal{R}(A)}$ for every $n$ and $\overline{\mathcal{R}(A)}$ is closed, we have $g\in\overline{\mathcal{R}(A)}$. Thus, $P_Ag=g$, and consequently $T^\#f=g$. Therefore, $T^\#$ is closed.

(2) Let $\{f_n\}\subseteq\mathcal{D}(T^\#)\cap\overline{\mathcal{R}(A)}$ satisfy $\|f_n-f\|_A\to0$ and $\|T^\#f_n-g\|_A\to0$ for some $f,g\in H$. Since $\|Ah\|^2\leq\|A\|\|h\|_A^2$ for every $h\in H$, it follows that $Af_n\to Af$ and $AT^\#f_n\to Ag$ in the Hilbert space norm. Using $AT^\#=T^*A$, we obtain $T^*Af_n\to Ag$. Since $T^*$ is closed, we have $Af\in\mathcal{D}(T^*)$ and $T^*Af=Ag$. Hence $f\in\mathcal{D}(T^\#)$ and $AT^\#f=T^*Af=Ag$. Therefore, $T^\#f-g\in\mathcal{N}(A)$, proving that $T^\#$ is $A$-closed.

(3) Suppose that $(T^\#)^\#$ exists. Since $AT^\#=T^*A$, taking Hilbert space adjoint gives $(T^\#)^*A=(AT^\#)^*=(T^*A)^*$. Since $A$ is bounded and $T$ is closable, the standard adjoint relation gives $A\overline T=AT^{**}\subseteq(T^*A)^*$. Therefore, $A\overline T\subseteq(T^\#)^*A=A(T^\#)^\#$.

Let $f\in\mathcal{D}(\overline T)$. Then $f\in\mathcal{D}((T^\#)^\#)$ and $A(T^\#)^\#f=A\overline Tf$. Since $AP_A=A$, we also have $AP_A\overline Tf=A\overline Tf$. Hence $(T^\#)^\#f-P_A\overline Tf\in\mathcal{N}(A)$. Both $(T^\#)^\#f$ and $P_A\overline Tf$ belong to $\overline{\mathcal{R}(A)}$. Since $\mathcal{N}(A)\cap\overline{\mathcal{R}(A)}=\{0\}$, it follows that $(T^\#)^\#f=P_A\overline Tf$. Thus, $P_A\overline T\subseteq(T^\#)^\#$.

Finally, suppose that $(T^*A)^*=A\overline T$. Then $(T^\#)^*A=(T^*A)^*=A\overline T$, and therefore $\mathcal{R}((T^\#)^*A)\subseteq\mathcal{R}(A)$. Hence $(T^\#)^\#$ exists. Moreover, $A(T^\#)^\#=(T^\#)^*A=A\overline T$. Since both $(T^\#)^\#$ and $P_A\overline T$ have range contained in $\overline{\mathcal{R}(A)}$, the uniqueness of the reduced solution gives $(T^\#)^\#=P_A\overline T$.
\end{proof}
\begin{theorem}
Let $T$ and $S$ be densely defined closable operators on $H$. Suppose that
\[
\mathcal{R}(T^*A)\subseteq\mathcal{R}(A),
\qquad
\mathcal{R}(S^*A)\subseteq\mathcal{R}(A).
\]
Then the following statements hold.
\begin{enumerate}
\item If $(T+S)^\#$ exists, then
$T^\#+S^\#\subseteq(T+S)^\#$.
\item If $S$ is bounded, then
$(T+S)^\#=T^\#+S^\#$.
\end{enumerate}
\end{theorem}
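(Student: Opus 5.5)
The plan is to work entirely through the defining relations $AT^\#=T^*A$, $AS^\#=S^*A$, the range condition $\mathcal{R}(\cdot)\subseteq\overline{\mathcal{R}(A)}$, and the fact that $\mathcal{N}(A)\cap\overline{\mathcal{R}(A)}=\{0\}$. Throughout we need $\mathcal{D}(T+S)=\mathcal{D}(T)\cap\mathcal{D}(S)$ to be dense so that $(T+S)^*$ makes sense; this is implicit in (1) and automatic in (2).

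For (1), take $f\in\mathcal{D}(T^\#+S^\#)=\mathcal{D}(T^\#)\cap\mathcal{D}(S^\#)$. Then $Af\in\mathcal{D}(T^*)\cap\mathcal{D}(S^*)=\mathcal{D}(T^*+S^*)$. By the Preliminaries proposition (part 1), $T^*+S^*\subseteq(T+S)^*$, so
\[
(T+S)^*Af=T^*Af+S^*Af=A(T^\#f+S^\#f)\in\mathcal{R}(A).
\]
In particular $f\in\mathcal{D}((T+S)^\#)$, since the $A$-adjoint is the reduced solution $A^\dagger(T+S)^*A$ and its domain is $\{f: Af\in\mathcal{D}((T+S)^*),\ (T+S)^*Af\in\mathcal{R}(A)\}$. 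Moreover $A(T+S)^\#f=A(T^\#f+S^\#f)$. Both vectors $(T+S)^\#f$ and $T^\#f+S^\#f$ lie in $\overline{\mathcal{R}(A)}$, and their difference lies in $\mathcal{N}(A)$, so they coincide. This gives $T^\#+S^\#\subseteq(T+S)^\#$.

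For (2), let $S$ be bounded. Then $\mathcal{D}(T+S)=\mathcal{D}(T)$ is dense, and the Preliminaries proposition gives $(T+S)^*=T^*+S^*$ with $\mathcal{D}((T+S)^*)=\mathcal{D}(T^*)$. It follows that $(T+S)^*A=T^*A+S^*A$. Since $S^*A$ is everywhere defined with range in $\mathcal{R}(A)$, we get $\mathcal{R}((T+S)^*A)\subseteq\mathcal{R}(A)$, so $(T+S)^\#$ exists. Here we must check that $T+S$ is closable, which holds because $T$ is closable and $S$ is bounded.

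By (1) it remains to prove $(T+S)^\#\subseteq T^\#+S^\#$. Take $f\in\mathcal{D}((T+S)^\#)$. Then $Af\in\mathcal{D}(T^*)$, and
\[
T^*Af=(T+S)^*Af-S^*Af\in\mathcal{R}(A),
\]
so $f\in\mathcal{D}(T^\#)$. Also $\mathcal{D}(S^\#)=H$: $S^*A$ is bounded with range in $\mathcal{R}(A)$, so $S^\#$ is everywhere defined (it is $A$-bounded, or directly by the reduced-solution description). Hence $f\in\mathcal{D}(T^\#+S^\#)$, and the inclusion from (1) forces equality.

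The main obstacle is the domain bookkeeping. One must justify that $\mathcal{D}(T^\#)=\{f: Af\in\mathcal{D}(T^*),\ T^*Af\in\mathcal{R}(A)\}$, which identifies the definition via $\langle Th,f\rangle_A=\langle h,g\rangle_A$ with the reduced solution $A^\dagger T^*A$. For $h\in\mathcal{D}(T)$ the identity reads $\langle Th,Af\rangle=\langle h,Ag\rangle$, which is exactly $Af\in\mathcal{D}(T^*)$ with $T^*Af=Ag$. With this identification in hand, everything else is a routine use of the uniqueness argument based on $\mathcal{N}(A)\cap\overline{\mathcal{R}(A)}=\{0\}$.
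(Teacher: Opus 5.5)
Your proof is correct. It also supplies something the paper does not: the printed proof of this theorem never deals with sums. It starts from $f\in\mathcal{D}(S^\#T^\#)$, uses $S^*T^*\subseteq(TS)^*$, and in part (2) computes $(TS)^\#=A^\dagger S^*AA^\dagger T^*A=S^\#T^\#$. In other words, it is essentially the argument of the next theorem (on products), placed under the wrong statement.

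Your argument is the additive counterpart of that template. You use $T^*+S^*\subseteq(T+S)^*$ in place of $S^*T^*\subseteq(TS)^*$, together with the same uniqueness principle $\mathcal{N}(A)\cap\overline{\mathcal{R}(A)}=\{0\}$.

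In part (2) you also handle what the sum statement requires and the product template skips. Statement (2) does not assume that $(T+S)^\#$ exists, and you prove that it does:
\begin{enumerate}
\item $\mathcal{D}(T+S)=\mathcal{D}(T)$ is dense;
\item $T+S$ is closable, since $T$ is closable and $S$ is bounded;
\item $\mathcal{R}((T+S)^*A)\subseteq\mathcal{R}(A)$, from $(T+S)^*=T^*+S^*$.
\end{enumerate}
You then upgrade the inclusion of (1) to equality by verifying $\mathcal{D}((T+S)^\#)\subseteq\mathcal{D}(T^\#)\cap\mathcal{D}(S^\#)$. For this you use the description $\mathcal{D}(T^\#)=\{f:Af\in\mathcal{D}(T^*),\ T^*Af\in\mathcal{R}(A)\}$, which you correctly extract from the paper's definition. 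A formal computation $A^\dagger(T^*A+S^*A)=A^\dagger T^*A+A^\dagger S^*A$ would reach the same conclusion, but only by way of exactly this domain bookkeeping.

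Two minor remarks. First, writing $\mathcal{D}(T+S)=\mathcal{D}(T)$ reads ``$S$ bounded'' as $S\in\mathcal{B}(H)$. That reading is forced: if $S$ is bounded only on a dense domain, one can have $\mathcal{D}(T)\cap\mathcal{D}(S)=\{0\}$ (e.g.\ via von Neumann's theorem quoted in the preliminaries). Second, the parenthetical appeal to $A$-boundedness for $\mathcal{D}(S^\#)=H$ is unnecessary. Hilbert-space boundedness alone does not yield $A$-boundedness, so that route would need its own argument. Your domain formula already gives $\mathcal{D}(S^\#)=H$, because $S^*A$ is everywhere defined with range in $\mathcal{R}(A)$.
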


\begin{proof}
(1) Let $f\in\mathcal{D}(S^\#T^\#)$. Then
$f\in\mathcal{D}(T^\#)$ and $T^\#f\in\mathcal{D}(S^\#)$. From
$AT^\#=T^*A$ and $AS^\#=S^*A$, we obtain
\[
AS^\#T^\#f=S^*T^*Af.
\]
Since $S^*T^*\subseteq(TS)^*$ and $(TS)^\#$ exists,
\[
AS^\#T^\#f=(TS)^*Af=A(TS)^\#f.
\]
Thus $\bigl(S^\#T^\#-(TS)^\#\bigr)f$ belongs to both
$\mathcal{N}(A)$ and $\overline{\mathcal{R}(A)}$, and hence it is zero.
Therefore,
\[
S^\#T^\#\subseteq(TS)^\#.
\]

(2) If $S\in\mathcal{B}(H)$, then $(TS)^*=S^*T^*$, and
\begin{align*}
(TS)^\#
&=A^\dagger S^*T^*A\\
&=A^\dagger S^*AA^\dagger T^*A\\
&=(A^\dagger S^*A)(A^\dagger T^*A)\\
&=S^\#T^\#.
\end{align*}
Here $AA^\dagger T^*A=T^*A$ because
$\mathcal{R}(T^*A)\subseteq\mathcal{R}(A)$.
\end{proof}
\begin{theorem}
Let $T$ and $S$ be densely defined closable operators on $H$ and suppose that they admit $A$-adjoints
$T^{\#}$ and $S^{\#}$, respectively. Then
\begin{enumerate}
\item If $(TS)^\#$ exists, then $S^\#T^\#\subseteq(TS)^\#$.
\item If $S \in \mathcal{B}(H)$ and $(TS)^\#$ exists, then
$(TS)^\#=S^\#T^\#$.
\end{enumerate}
\end{theorem}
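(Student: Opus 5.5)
The plan is to work with the defining identities $AT^{\#}=T^*A$, $AS^{\#}=S^*A$, $A(TS)^{\#}=(TS)^*A$, together with the facts that every reduced weighted adjoint has range in $\overline{\mathcal{R}(A)}$ and that $\mathcal{N}(A)\cap\overline{\mathcal{R}(A)}=\{0\}$. The common strategy for both parts is this: show that two candidate operators agree after left multiplication by $A$ on a given domain, and conclude that they agree there because both map into $\overline{\mathcal{R}(A)}$.

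For (1), I would take $f\in\mathcal{D}(S^{\#}T^{\#})$, so that $f\in\mathcal{D}(T^{\#})$ and $T^{\#}f\in\mathcal{D}(S^{\#})$. Applying the two identities in turn gives
\[
AS^{\#}T^{\#}f=S^*AT^{\#}f=S^*T^*Af.
\]
This computation also shows that $Af\in\mathcal{D}(S^*T^*)$. By the preliminary proposition on adjoints, $S^*T^*\subseteq(TS)^*$; this uses that $\mathcal{D}(TS)$ is dense, which is implicit in the assumption that $(TS)^{\#}$ exists. Hence $Af\in\mathcal{D}((TS)^*)$, and
\[
(TS)^*Af=S^*T^*Af\in\mathcal{R}(A).
\]
It follows that $f\in\mathcal{D}((TS)^{\#})$ and $A(TS)^{\#}f=AS^{\#}T^{\#}f$. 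The difference $(S^{\#}T^{\#}-(TS)^{\#})f$ then lies in both $\mathcal{N}(A)$ and $\overline{\mathcal{R}(A)}$, so it vanishes. This proves the inclusion.

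For (2), with $S$ bounded the preliminary proposition gives $(TS)^*=S^*T^*$, including equality of domains. By (1), it remains to show $\mathcal{D}((TS)^{\#})\subseteq\mathcal{D}(S^{\#}T^{\#})$. I would use the representation $T^{\#}=A^\dagger T^*A$ and $S^{\#}=A^\dagger S^*A$ from the proof of the closedness theorem. Take $f\in\mathcal{D}((TS)^{\#})$. Then $Af\in\mathcal{D}((TS)^*)=\mathcal{D}(S^*T^*)$, so $Af\in\mathcal{D}(T^*)$. The standing hypothesis $\mathcal{R}(T^*A)\subseteq\mathcal{R}(A)$ then puts $f$ in $\mathcal{D}(T^{\#})$, and gives $T^*Af=AT^{\#}f$. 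Since $S$ is bounded, $S^*$ is everywhere defined, and $\mathcal{R}(S^*A)\subseteq\mathcal{R}(A)$ makes $S^{\#}$ defined on all of $H$. In particular $T^{\#}f\in\mathcal{D}(S^{\#})$. The operator identity then reads
\[
A^\dagger S^*T^*A=A^\dagger S^*AA^\dagger T^*A,
\]
using $AA^\dagger T^*A=T^*A$.

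The main obstacle is the domain bookkeeping in (2). I need to confirm that $\mathcal{D}(S^{\#})=H$, which follows because $S^*A$ is bounded and everywhere defined with range in $\mathcal{R}(A)$. I also need to confirm that membership of $f$ in $\mathcal{D}((TS)^{\#})$ really forces $Af\in\mathcal{D}(T^*)$ through the equality $(TS)^*=S^*T^*$, rather than merely through an inclusion. Everything else is the same $\mathcal{N}(A)\cap\overline{\mathcal{R}(A)}=\{0\}$ uniqueness argument used in (1).
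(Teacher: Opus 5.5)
Your argument for (1) is correct and is the paper's argument. You are more careful than the paper here: you check explicitly that $Af\in\mathcal{D}((TS)^*)$ and $(TS)^*Af=AS^{\#}T^{\#}f\in\mathcal{R}(A)$, so that $f\in\mathcal{D}((TS)^{\#})$.

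The gap is in (2), at the step ``with $S$ bounded the preliminary proposition gives $(TS)^*=S^*T^*$, including equality of domains.''
\begin{itemize}
\item The preliminary proposition gives $(TU)^*=U^*T^*$ only when the \emph{left} factor is bounded. In $TS$ the bounded operator $S$ is the right factor, so you only get $S^*T^*\subseteq(TS)^*$.
\item The point you flagged as the main obstacle, whether $Af\in\mathcal{D}((TS)^*)$ forces $Af\in\mathcal{D}(T^*)$, is exactly where the argument breaks.
\item It cannot be repaired, because statement (2) is false as written. Take $A=I$, so that $X^{\#}=X^*$ for every densely defined closable $X$. Let $T$ be an unbounded self-adjoint operator on an infinite-dimensional $H$, and let $S=0$. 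Then $TS=0$ on all of $H$, so $(TS)^{\#}=0$ with domain $H$. But $S^{\#}T^{\#}=0\cdot T$ has domain $\mathcal{D}(T)\neq H$.
\item A less degenerate example is $S=(T-iI)^{-1}$. Here $TS=I+iS$ is bounded, so $(TS)^{\#}=I-iS^*$ is everywhere defined, while $S^{\#}T^{\#}=(T+iI)^{-1}T$ lives only on $\mathcal{D}(T)$.
\end{itemize}

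The paper's own proof of (2) makes the same misstep when it writes $A^\dagger(TS)^*A=A^\dagger S^*T^*A$, so it has the same defect; only the inclusion in (1) survives.

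What is true is the version with the bounded operator on the left. If $T\in\mathcal{B}(H)$, then $(TS)^*=S^*T^*$ with equal domains, and your domain bookkeeping goes through with one change. You now get $T^{\#}f\in\mathcal{D}(S^{\#})$ from $AT^{\#}f=T^*Af\in\mathcal{D}(S^*)$ together with $\mathcal{R}(S^*A)\subseteq\mathcal{R}(A)$, rather than from $S$ being everywhere defined. Equivalently, for $S\in\mathcal{B}(H)$ one has $(ST)^{\#}=T^{\#}S^{\#}$.
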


\begin{proof}
(1) Let $f\in\mathcal{D}(S^\#T^\#)$. Since
\[
AT^\#=T^*A,\qquad AS^\#=S^*A,
\]
we obtain
\[
AS^\#T^\#f=S^*AT^\#f=S^*T^*Af.
\]
Because $T$ and $S$ are densely defined and closable,
$S^*T^*\subseteq(TS)^*$. Hence
\[
AS^\#T^\#f=(TS)^*Af=A(TS)^\#f.
\]
It follows that
\[
\bigl(S^\#T^\#-(TS)^\#\bigr)f
\in \mathcal{N}(A)\cap\overline{\mathcal{R}(A)}=\{0\}.
\]
Thus $S^\#T^\#f=(TS)^\#f$, and therefore
$S^\#T^\#\subseteq(TS)^\#$.

(2) If $S\in\mathcal{B}(H)$, then $(TS)^*=S^*T^*$. Consequently,
\begin{align*}
(TS)^\#
&=A^\dagger (TS)^*A \\
&=A^\dagger S^*T^*A \\
&=A^\dagger S^*AA^\dagger T^*A \\
&=(A^\dagger S^*A)(A^\dagger T^*A) \\
&=S^\#T^\#.
\end{align*}
Here $AA^\dagger T^*A=T^*A$ because
$\mathcal{R}(T^*A)\subseteq\mathcal{R}(A)$.
\end{proof}
\begin{theorem}
Let $F:\mathcal{D}(F)\subseteq H\to H$ be a densely defined operator admitting a $A$-adjoint $F^\#$. Then $\mathcal{R}(F)^{\perp_A}=\mathcal{N}(F^\#)$, where $\mathcal{R}(F)^{\perp_A}=\{f\in H:\langle f,h\rangle_A=0\text{ for every }h\in\mathcal{R}(F)\}$.
\end{theorem}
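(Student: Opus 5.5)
The plan is to prove the two inclusions $\mathcal{N}(F^\#)\subseteq\mathcal{R}(F)^{\perp_A}$ and $\mathcal{R}(F)^{\perp_A}\subseteq\mathcal{N}(F^\#)$ directly from the defining property of the $A$-adjoint:
\[
f\in\mathcal{D}(F^\#),\quad F^\#f=g_f\in\overline{\mathcal{R}(A)}
\quad\text{and}\quad
\langle Fh,f\rangle_A=\langle h,g_f\rangle_A \text{ for all } h\in\mathcal{D}(F).
\]

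\emph{First inclusion.} Let $f\in\mathcal{N}(F^\#)$. Then $f\in\mathcal{D}(F^\#)$ and $F^\#f=0$. For every $h\in\mathcal{D}(F)$ this gives
\[
\langle Fh,f\rangle_A=\langle h,F^\#f\rangle_A=\langle h,0\rangle_A=0.
\]
Every element of $\mathcal{R}(F)$ has the form $Fh$ with $h\in\mathcal{D}(F)$, so $f\in\mathcal{R}(F)^{\perp_A}$. No further work is needed here.

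\emph{Reverse inclusion.} Let $f\in\mathcal{R}(F)^{\perp_A}$, so $\langle Fh,f\rangle_A=0$ for all $h\in\mathcal{D}(F)$. The candidate representing vector is $g=0$. It lies in $\overline{\mathcal{R}(A)}$ and satisfies $\langle Fh,f\rangle_A=\langle h,0\rangle_A$ for every $h\in\mathcal{D}(F)$. To conclude $f\in\mathcal{D}(F^\#)$, the definition also requires this representing vector to be \emph{unique} in $\overline{\mathcal{R}(A)}$.

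\emph{Main obstacle: uniqueness.} This is the only point that needs care, because membership in $\mathcal{D}(F^\#)$ is not automatic from a representation alone. Suppose $g\in\overline{\mathcal{R}(A)}$ also satisfies $\langle h,g\rangle_A=0$ for all $h\in\mathcal{D}(F)$. Then
\[
\langle h,Ag\rangle=\langle Ah,g\rangle=\langle h,g\rangle_A=0
\quad\text{for all } h\in\mathcal{D}(F).
\]
Since $\mathcal{D}(F)$ is dense in $H$, it follows that $Ag=0$, so $g\in\mathcal{N}(A)\cap\overline{\mathcal{R}(A)}=\{0\}$. This is the same fact already used in the uniqueness part of the semi-Hilbert Riesz representation theorem.

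Hence $g_f=0$ is the unique representing vector. Therefore $f\in\mathcal{D}(F^\#)$ and $F^\#f=0$, that is, $f\in\mathcal{N}(F^\#)$. Combining the two inclusions gives $\mathcal{R}(F)^{\perp_A}=\mathcal{N}(F^\#)$.
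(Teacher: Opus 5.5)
Your proof is correct and is essentially the paper's argument. The paper runs the same two inclusions through the operator identity $AF^\#=F^*A$: it shows $Af\in\mathcal{D}(F^*)$ with $F^*Af=0$, and then $F^\#f\in\mathcal{N}(A)\cap\overline{\mathcal{R}(A)}=\{0\}$. You instead work directly from the inner-product definition of $\mathcal{D}(F^\#)$ and verify uniqueness of the representing vector $0$, using the same two ingredients: density of $\mathcal{D}(F)$ (implicit in the paper's use of $F^*$) and $\mathcal{N}(A)\cap\overline{\mathcal{R}(A)}=\{0\}$.
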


\begin{proof}
Let $f\in\mathcal{R}(F)^{\perp_A}$. Then $\langle f,Fg\rangle_A=0$ for every $g\in\mathcal{D}(F)$. Since $\langle f,Fg\rangle_A=\langle Af,Fg\rangle$, it follows that $\langle Fg,Af\rangle=0$ for every $g\in\mathcal{D}(F)$. Thus, the linear functional $g\mapsto\langle Fg,Af\rangle$ is bounded on $\mathcal{D}(F)$. By the definition of the Hilbert space adjoint, $Af\in\mathcal{D}(F^*)$ and $F^*Af=0$. In particular, $F^*Af\in\mathcal{R}(A)$, and hence $f\in\mathcal{D}(F^\#)$. Since $F^\#$ is the $A$-adjoint of $F$, we have $AF^\#f=F^*Af=0$. Therefore, $F^\#f\in\mathcal{N}(A)$. On the other hand, $\mathcal{R}(F^\#)\subseteq\overline{\mathcal{R}(A)}$, and hence $F^\#f\in\mathcal{N}(A)\cap\overline{\mathcal{R}(A)}=\{0\}$. Thus, $F^\#f=0$, proving that $\mathcal{R}(F)^{\perp_A}\subseteq\mathcal{N}(F^\#)$.

Conversely, let $f\in\mathcal{N}(F^\#)$. Then $f\in\mathcal{D}(F^\#)$ and $F^\#f=0$. Since $AF^\#=F^*A$ on $\mathcal{D}(F^\#)$, we have $F^*Af=AF^\#f=0$. Therefore, for every $g\in\mathcal{D}(F)$, $\langle f,Fg\rangle_A=\langle Af,Fg\rangle=\langle F^*Af,g\rangle=0$. Hence $f\in\mathcal{R}(F)^{\perp_A}$, and consequently $\mathcal{N}(F^\#)\subseteq\mathcal{R}(F)^{\perp_A}$. Therefore, $\mathcal{R}(F)^{\perp_A}=\mathcal{N}(F^\#)$.
\end{proof}
\section{$AB$-Frame-Related Sequences}

Let $F=\{f_n\}_{n\in\mathbb{N}}\subseteq H$. We use $\|c\|_B=\langle Bc,c\rangle^{1/2}, \qquad c\in\ell^2.$ 

The weighted analysis operator is
\[
C_{AB}=\{\langle f,f_n\rangle_A\}_{n\in\mathbb{N}}=C_FA,
\]
with natural domain
\[
\mathcal{D}(C_{AB})
=\left\{f\in H:\{\langle f,f_n\rangle_A\}_{n\in\mathbb{N}}\in\ell^2\right\}.\]
The weighted synthesis operator is
\[
D_{AB}=\sum_{n=1}^{\infty}(Bc)_nf_n=D_FB,
\]
where
\[
\mathcal{D}(D_{AB})
=\{c\in\ell^2:Bc\in\mathcal{D}(D_F)\}.
\]
And the weighted frame operator is,
\[
S_{AB}=D_{AB}C_{AB},
\]
with
\[
\mathcal{D}(S_{AB})
=\{f\in\mathcal{D}(C_{AB}):C_{AB}f\in\mathcal{D}(D_{AB})\}.
\]

\begin{defn}
The sequence $F$ is an \emph{$AB$-Bessel sequence} with bound $\beta>0$ if $\mathcal{D}(C_{AB})=H$ and
\[
\|C_{AB}f\|_B^2\leq\beta\|f\|_A^2,
\text{ for all } f\in H.
\]
\end{defn}

\begin{defn}
The sequence $F$ is an \emph{$AB$-lower semi-frame} with lower bound $\alpha>0$ if
\[
\alpha\|f\|_A^2\leq\|C_{AB}f\|_B^2,
\text{ for all } f\in\mathcal{D}(C_{AB}).
\]
\end{defn}

\begin{defn}
The sequence $F$ is an \emph{$AB$-frame} with bounds $\alpha,\beta>0$ if $\mathcal{D}(C_{AB})=H$ and
\[
\alpha\|f\|_A^2
\leq\|C_{AB}f\|_B^2
\leq\beta\|f\|_A^2,
\text{ for all } f\in H.
\]
\end{defn}

\begin{prop}\label{closedoperator}
Let 
$F=\{f_n\}_{n\in\mathbb{N}}\subset H$. Then the $AB$-analysis operator
\[
C_{AB}:\mathcal{D}(C_{AB})\subseteq H\to\ell^2,
\qquad
C_{AB}f=\{\langle f,f_n\rangle_A\}_{n\in\mathbb{N}},
\]
is closed with respect to the indicated topologies.
\end{prop}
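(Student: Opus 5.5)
The plan is to prove closedness of $C_{AB}$ as an operator from $H$ (Hilbert norm) to $\ell^2$ (Hilbert norm). This is the natural reading of ``indicated topologies'', since $C_{AB}=C_FA$ with $A$ bounded. The argument is the standard one for analysis operators: coordinatewise continuity combined with Fatou's lemma.

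Take a sequence $\{g_k\}\subseteq\mathcal{D}(C_{AB})$ with $g_k\to g$ in $H$ and $C_{AB}g_k\to c=\{c_n\}$ in $\ell^2$. We must show $g\in\mathcal{D}(C_{AB})$ and $C_{AB}g=c$.

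First, for each fixed $n$, boundedness of $A$ gives
\[
|\langle g_k,f_n\rangle_A-\langle g,f_n\rangle_A|=|\langle A(g_k-g),f_n\rangle|\leq\|A\|\|g_k-g\|\|f_n\|\to0 .
\]
Convergence in $\ell^2$ implies coordinatewise convergence, so $(C_{AB}g_k)_n\to c_n$. Hence $\langle g,f_n\rangle_A=c_n$ for every $n$.

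Second, membership in the domain follows because $c\in\ell^2$: we have $\sum_n|\langle g,f_n\rangle_A|^2=\|c\|^2<\infty$, so $g\in\mathcal{D}(C_{AB})$ and $C_{AB}g=c$. (Equivalently, one could use Fatou's lemma on finite partial sums, but it is not needed since the limit coordinates coincide exactly with $c$.) Hence the graph of $C_{AB}$ is closed.

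If ``indicated topologies'' instead refers to the seminorms $\|\cdot\|_A$ on $H$ and $\|\cdot\|_B$ on $\ell^2$, then in the spirit of the $A$-closedness definition one must conclude only up to null spaces. This variant is the real obstacle. The same coordinate argument would work on the $H$ side, since $|\langle A(g_k-g),f_n\rangle|\leq\|A\|^{1/2}\|g_k-g\|_A\|f_n\|$. On the coefficient side, however, $\|\cdot\|_B$-convergence does not give coordinatewise convergence unless $B$ is bounded below.

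The way I would handle it is to note that $\|B(C_{AB}g_k-c)\|\leq\|B\|^{1/2}\|C_{AB}g_k-c\|_B\to0$, so $B C_{AB}g_k\to Bc$ in $\ell^2$. Since $C_{AB}g_k\to C_{AB}g$ coordinatewise, one hopes to identify $BC_{AB}g$ with $Bc$, and hence $C_{AB}g-c\in\mathcal{N}(B)$. This identification requires the sequence $\{C_{AB}g_k\}$ to be bounded in $\ell^2$, so that weak convergence $C_{AB}g_k\rightharpoonup C_{AB}g$ can be combined with continuity of $B$. I expect that step to need either an extra hypothesis or restriction to the Hilbert-norm interpretation. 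In any case I will present the Hilbert-norm proof as the main argument.
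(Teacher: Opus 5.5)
Your proof is correct and follows the paper's argument: boundedness of $A$ gives coordinatewise convergence $\langle g_k,f_n\rangle_A\to\langle g,f_n\rangle_A$, $\ell^2$-convergence forces $c_n=\langle g,f_n\rangle_A$, and $c\in\ell^2$ then gives $g\in\mathcal{D}(C_{AB})$ with $C_{AB}g=c$. The paper's proof also tacitly relies on $\ell^2$-norm convergence on the coefficient side (so that coordinates converge), so your Hilbert-norm reading of ``indicated topologies'' is exactly the one the argument needs.
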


\begin{proof}
Let $\{f^{(k)}\}_{k\in\mathbb{N}}\subseteq\mathcal{D}(C_{AB})$ satisfy
\[
f^{(k)}\to f\quad\text{in }H,
\qquad
C_{AB}f^{(k)}\to d\quad\text{in the coefficient space}.
\]
For each $n\in\mathbb{N}$, boundedness of $A$ gives
\[
\langle f^{(k)},f_n\rangle_A
=\langle Af^{(k)},f_n\rangle
\longrightarrow
\langle Af,f_n\rangle
=\langle f,f_n\rangle_A.
\]
On the other hand, convergence of $C_{AB}f^{(k)}$ yields convergence of the corresponding coordinates to $d_n$. Hence
$d_n=\langle f,f_n\rangle_A$ for every $n$, so
\[
d=\{\langle f,f_n\rangle_A\}_{n\in\mathbb{N}}.
\]
Thus $f\in\mathcal{D}(C_{AB})$ and $C_{AB}f=d$, proving that the graph of $C_{AB}$ is closed.
\end{proof}
\begin{prop}
Let 
$F=\{f_n\}_{n\in\mathbb{N}}\subset H$. If $B(c_{00})\subseteq c_{00}$, where
$c_{00}$ is the space of finitely supported sequences, then $D_{AB}$ is densely defined.
\end{prop}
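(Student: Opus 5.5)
The plan is to show that $c_{00}\subseteq\mathcal{D}(D_{AB})$. Since $c_{00}$ is dense in $\ell^2$, this gives density of $\mathcal{D}(D_{AB})$ at once.

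Recall that
\[
\mathcal{D}(D_{AB})=\{c\in\ell^2: Bc\in\mathcal{D}(D_F)\},
\qquad
\mathcal{D}(D_F)=\Big\{d\in\ell^2:\sum_{n=1}^\infty d_nf_n \text{ converges in } H\Big\}.
\]
The first step is to observe that $c_{00}\subseteq\mathcal{D}(D_F)$. For $d\in c_{00}$ the series $\sum_n d_nf_n$ has only finitely many nonzero terms. Its partial sums are therefore eventually constant, so the series converges in $H$, with no assumption on the sequence $F$.

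Next, take $c\in c_{00}$. The hypothesis $B(c_{00})\subseteq c_{00}$ gives $Bc\in c_{00}$, and so $Bc\in\mathcal{D}(D_F)$ by the first step. Since also $c\in\ell^2$, the definition of $\mathcal{D}(D_{AB})$ yields $c\in\mathcal{D}(D_{AB})$. Hence $c_{00}\subseteq\mathcal{D}(D_{AB})$.

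Finally, $c_{00}$ is dense in $\ell^2$, because the truncations of any $c\in\ell^2$ converge to $c$ in norm. It follows that $\mathcal{D}(D_{AB})$ is dense, and $D_{AB}$ is densely defined.

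There is no real obstacle here. The only point needing care is to notice that the hypothesis on $B$ is used exactly to move the finitely supported vector $c$ into $\mathcal{D}(D_F)$ after applying $B$. In general $Bc$ can be an arbitrary $\ell^2$ sequence, for which $\sum_n (Bc)_nf_n$ need not converge.
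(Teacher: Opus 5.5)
Your proof is correct and follows the paper's argument: the hypothesis gives $Bc\in c_{00}$ for $c\in c_{00}$, so the synthesis series is a finite sum, hence $c_{00}\subseteq\mathcal{D}(D_{AB})$, and density of $c_{00}$ in $\ell^2$ finishes it. You are also slightly more precise than the paper, which speaks of convergence in $(H,\langle\cdot,\cdot\rangle_A)$. The domain of $D_F$ is defined via convergence in $H$, and you check exactly that; a finite sum converges in either sense.
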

\begin{proof}
Let $c=\{c_n\}_{n\in\mathbb{N}}\in c_{00}$. Since $B(c_{00})\subseteq c_{00}$, we have $Bc=\{b_n\}_{n\in\mathbb{N}}\in c_{00}$. Hence, there exists $N\in\mathbb{N}$ such that $b_n=0$ for all $n>N$. Therefore,
$D_{AB}c=\sum_{n=1}^{N}b_nf_n$, which is a finite sum and thus converges in $(H,\langle\cdot,\cdot\rangle_A)$. Consequently, $c_{00}\subseteq \mathcal{D}(D_{AB})$. Since $c_{00}$ is dense in $\ell^2$, it follows that $\mathcal{D}(D_{AB})$ is dense in $\ell^2$. Hence, $D_{AB}$ is densely defined.
\end{proof}
\begin{example}
The assumption $B(c_{00})\subseteq c_{00}$ is essential. Let $H=\ell^2$, take
$A=I_H$, and set $f_n=ne_n$, where $\{e_n\}_{n\in\mathbb{N}}$ is the canonical orthonormal basis. Then
\[
D_Fc=(nc_n)_{n\in\mathbb{N}},
\qquad
\mathcal{D}(D_F)
=\left\{c=(c_n)\in\ell^2:\sum_{n=1}^{\infty}n^2|c_n|^2<\infty\right\}.
\]
Let $u=(1/n)_{n\in\mathbb{N}}$. We have $u\in\ell^2$ but
$u\notin\mathcal{D}(D_F)$. Define the positive rank-one operator
\[
Bc=\langle c,u\rangle u,
\qquad c\in\ell^2.
\]
Since $Be_1=u_1u$ has infinitely many nonzero coordinates,
$B(c_{00})\nsubseteq c_{00}$. Moreover,
\begin{align*}
\mathcal{D}(D_{AB})
&=\{c\in\ell^2:Bc\in\mathcal{D}(D_F)\}\\
&=\{c\in\ell^2:\langle c,u\rangle=0\}
=u^{\perp}.
\end{align*}
Thus $\mathcal{D}(D_{AB})$ is a proper closed subspace of $\ell^2$ and is not dense.
\end{example}
\begin{prop}
Let 
$F=\{f_n\}_{n\in\mathbb{N}}\subset H$. Assume that
$B(c_{00})\subseteq c_{00}$. Then the adjoint of $D_{AB}$ is $C_{AB}$; that is,
\[
D_{AB}^*=C_{AB}.
\]
\end{prop}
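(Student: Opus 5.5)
We prove the two inclusions $C_{AB}\subseteq D_{AB}^*$ and $D_{AB}^*\subseteq C_{AB}$, following the classical proof of $D_F^*=C_F$. Throughout, the adjoint is read, as everywhere in this section, with respect to the weighted forms: $\langle\cdot,\cdot\rangle_B$ on the coefficient space $\ell^2$ and $\langle\cdot,\cdot\rangle_A$ on $H$. In this reading, $D_{AB}^*f=g$ means $\langle D_{AB}c,f\rangle_A=\langle c,g\rangle_B$ for all $c\in\mathcal{D}(D_{AB})$. The hypothesis $B(c_{00})\subseteq c_{00}$ enters twice. First, by the preceding proposition it makes $D_{AB}$ densely defined, so $D_{AB}^*$ is a well-defined operator. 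Second, it supplies the finitely supported test vectors used in the reverse inclusion.

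\emph{Step 1: $C_{AB}\subseteq D_{AB}^*$.} Fix $f\in\mathcal{D}(C_{AB})$ and $c\in\mathcal{D}(D_{AB})$. Then $Bc\in\mathcal{D}(D_F)$, so the partial sums $\sum_{n\le N}(Bc)_nf_n$ converge in $H$. Since $A$ is bounded, $\langle\cdot,f\rangle_A$ is continuous in the Hilbert norm, and we may pass the pairing through the limit:
\[
\langle D_{AB}c,f\rangle_A=\lim_{N\to\infty}\sum_{n=1}^{N}(Bc)_n\,\overline{\langle f,f_n\rangle_A}.
\]
Because both $Bc$ and $C_{AB}f$ lie in $\ell^2$, this series converges absolutely. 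Its limit is therefore $\langle Bc,C_{AB}f\rangle=\langle c,C_{AB}f\rangle_B$. Hence $f\in\mathcal{D}(D_{AB}^*)$ and $D_{AB}^*f=C_{AB}f$.

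\emph{Step 2: $D_{AB}^*\subseteq C_{AB}$.} Let $f\in\mathcal{D}(D_{AB}^*)$ and put $g=D_{AB}^*f$. Test the adjoint identity with $c\in c_{00}$. Then $Bc\in c_{00}$, the series defining $D_{AB}c$ is a finite sum, and we obtain
\[
\sum_n (Bc)_n\overline{\langle f,f_n\rangle_A}=\langle Bc,g\rangle .
\]
Thus the formal sequence $x=\{\langle f,f_n\rangle_A\}_{n\in\mathbb{N}}$ and the vector $g\in\ell^2$ induce the same functional on $B(c_{00})$. The functional $c\mapsto\langle Bc,x\rangle$ on $c_{00}$ is therefore dominated by $\|c\|_B\|g\|_B$ (Cauchy--Schwarz inequality). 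By the Riesz representation theorem in the semi-Hilbert setting, it has a unique representative in $\overline{\mathcal{R}(B)}$. We identify $x$ with this representative, which gives $x\in\ell^2$ and $C_{AB}f=g$ modulo $\mathcal{N}(B)$, i.e. equality in $\|\cdot\|_B$. This gives $f\in\mathcal{D}(C_{AB})$.

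If needed, we use the closedness of $C_{AB}$ (Proposition~\ref{closedoperator}) to pass from truncations $x^{(N)}$ to $x$. One shows that the truncations are bounded in $\ell^2$ using the estimate above, and then extracts the limit.

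\emph{Main obstacle.} The delicate point is in Step 2: concluding that $x$ is genuinely square-summable, and that $g$ coincides with $x$ rather than merely agreeing with it against $B(c_{00})$. The identity only pins $x$ down on the coordinates "seen" by $B(c_{00})$, so we must use that $B(c_{00})$ is dense enough in $\overline{\mathcal{R}(B)}$. We would first try a coordinatewise argument with $c=e_k$: this gives $(Be_k)$-weighted equations, and we would solve for the coordinates of $x$ by finite linear algebra on the finite supports of the vectors $Be_k$. Only if that fails would we fall back to the Riesz/closedness argument.
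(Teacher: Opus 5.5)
Your Step 1 is correct and matches the paper's first inclusion. The failure is in Step 2, at the sentence ``We identify $x$ with this representative.'' Testing with $c\in c_{00}$ only gives $\sum_n (Bc)_n\overline{(x_n-g_n)}=0$ for all $c\in c_{00}$. In other words, $x-g$ is annihilated, coordinatewise and as a formal sum, by $B(c_{00})$. The Riesz representative of $c\mapsto\langle c,g\rangle_B$ in $\overline{\mathcal{R}(B)}$ is $P_Bg$, but nothing ties the formal sequence $x$ to it. In any direction that $B(c_{00})$ does not see, $x$ is unconstrained and need not be square-summable.

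Your fallbacks fail for the same reason. The estimate bounds pairings of $x^{(N)}$ against $B(c_{00})$, not $\|x^{(N)}\|_{\ell^2}$. The choice $c=e_k$ only yields $(Bx)_k=(Bg)_k$, which determines $x$ only up to formal solutions of $By=0$, and these need not be zero or lie in $\ell^2$. Even if $x\in\ell^2$, you would only get $C_{AB}f-g\in\mathcal{N}(B)$. So in the unreduced weighted reading, $D_{AB}^*$ is not even single-valued unless $B$ is injective.

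The gap cannot be closed, because the reverse inclusion is false under the hypothesis $B(c_{00})\subseteq c_{00}$. Here is a counterexample:
\begin{itemize}
\item $H=\ell^2$, $A=I_H$, $f_n=ne_n$;
\item $B$ the orthogonal projection onto $\overline{\operatorname{span}}\{e_{2n}:n\in\mathbb{N}\}$, which is diagonal, so $B(c_{00})\subseteq c_{00}$;
\item $f=\sum_n (2n-1)^{-1}e_{2n-1}\in\ell^2$.
\end{itemize}
For every $c\in\mathcal{D}(D_{AB})$, $D_{AB}c=\sum_m 2m\,c_{2m}e_{2m}$ is supported on even coordinates. Hence $\langle D_{AB}c,f\rangle_A=0=\langle c,0\rangle_B$, so $f\in\mathcal{D}(D_{AB}^*)$. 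This holds in the weighted reading, and also for the Hilbert adjoint since $A=I_H$. However, $C_{AB}f=(1,0,1,0,\ldots)\notin\ell^2$. This is exactly the paper's own later example showing $P_BC_{AB}\subsetneq D_{AB}^{\#}$.

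The paper's proof takes the same two-inclusion route as yours and has the same hole. It writes $\langle c,C_{AB}f\rangle_B$ before knowing $f\in\mathcal{D}(C_{AB})$. Moreover, density of $c_{00}$ would in any case only give $B(d-C_{AB}f)=0$, not $d=C_{AB}f$.

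What is actually provable is $C_{AB}\subseteq D_{AB}^*$. Equality does hold under the stronger hypothesis $B(c_{00})=c_{00}$. Then $B$ is injective, and choosing $c\in c_{00}$ with $Bc=e_k$ gives $\overline{x_k}=\langle f_k,f\rangle_A=\langle e_k,g\rangle=\overline{g_k}$ for each $k$. Hence $x=g\in\ell^2$.
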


\begin{proof}
For $f\in\mathcal{D}(C_{AB})$ and $c\in\mathcal{D}(D_{AB})$, the definitions give
\begin{align*}
\langle D_{AB}c,f\rangle_A
&=\left\langle\sum_{n=1}^{\infty}(Bc)_nf_n,f\right\rangle_A\\
&=\langle Bc,C_{AB}f\rangle_{\ell^2}
=\langle c,C_{AB}f\rangle_B.
\end{align*}
Hence $C_{AB}\subseteq D_{AB}^*$.

Conversely, let $f\in\mathcal{D}(D_{AB}^*)$. Then there is a coefficient vector
$d\in\ell^2$ such that
\[
\langle D_{AB}c,f\rangle_A=\langle c,d\rangle_B,
\qquad c\in\mathcal{D}(D_{AB}).
\]
Taking $c\in c_{00}$ and using $B(c_{00})\subseteq c_{00}$, the preceding computation yields
\[
\langle c,d\rangle_B=\langle c,C_{AB}f\rangle_B,
\qquad c\in c_{00}.
\]
Density of $c_{00}$ implies $d=C_{AB}f$. Therefore
$f\in\mathcal{D}(C_{AB})$, and the reverse inclusion follows.
\end{proof}
\begin{prop}\label{Aadj}
Suppose that
$C_{AB}$ and $D_{AB}$ are densely defined. Then
\[
D_{AB}^{\#}=C_{AB}.
\]
\end{prop}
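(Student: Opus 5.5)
We unwind the definition of the reduced weighted adjoint in the mixed setting. Here $D_{AB}$ maps $(\ell^2,\langle\cdot,\cdot\rangle_B)$ into $(H,\langle\cdot,\cdot\rangle_A)$, so $D_{AB}^{\#}$ is the operator $H\to\ell^2$ characterized by
\[
BD_{AB}^{\#}=D_{AB}^*A,\qquad \mathcal{R}(D_{AB}^{\#})\subseteq\overline{\mathcal{R}(B)}.
\]
Equivalently, $f\in\mathcal{D}(D_{AB}^{\#})$ precisely when there is a unique $d\in\overline{\mathcal{R}(B)}$ with
\[
\langle D_{AB}c,f\rangle_A=\langle c,d\rangle_B\quad\text{for all } c\in\mathcal{D}(D_{AB}),
\]
and then $D_{AB}^{\#}f=d$. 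The plan is to prove the two inclusions $C_{AB}\subseteq D_{AB}^{\#}$ and $D_{AB}^{\#}\subseteq C_{AB}$. The computation for both is the one in the preceding proposition:
\[
\langle D_{AB}c,f\rangle_A=\langle D_FBc,Af\rangle=\langle Bc,C_FAf\rangle=\langle c,C_{AB}f\rangle_B .
\]

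\textbf{Step 1: $C_{AB}\subseteq D_{AB}^{\#}$.} Take $f\in\mathcal{D}(C_{AB})$ and $c\in\mathcal{D}(D_{AB})$. Then $Bc\in\mathcal{D}(D_F)$, so the series $\sum_n (Bc)_n f_n$ converges in $H$. Pairing termwise with $Af$ is legitimate because $\{\langle Af,f_n\rangle\}\in\ell^2$, which gives the identity above. Thus $C_{AB}f$ is a representing vector for $f$.

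The representing vector is only determined modulo $\mathcal{N}(B)$. This is where the reduced convention matters: the canonical representative is $P_BC_{AB}f$, where $P_B$ denotes the projection onto $\overline{\mathcal{R}(B)}$. I will identify $C_{AB}f$ with this representative as an element of the $B$-seminormed space, i.e. modulo $\mathcal{N}(B)$, or else note that the statement is to be read as $BD_{AB}^{\#}=BC_{AB}$. Uniqueness of the representative in $\overline{\mathcal{R}(B)}$ comes from density of $\mathcal{D}(D_{AB})$ together with $\mathcal{N}(B)\cap\overline{\mathcal{R}(B)}=\{0\}$, exactly as in the uniqueness part of the semi-Hilbert Riesz representation theorem.

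\textbf{Step 2: $D_{AB}^{\#}\subseteq C_{AB}$.} Take $f\in\mathcal{D}(D_{AB}^{\#})$ with $d=D_{AB}^{\#}f$, so that $Bd=D_{AB}^*Af$. Pairing against $c\in\mathcal{D}(D_{AB})$ and using Step 1 gives
\[
\langle Bc,d\rangle=\langle c,d\rangle_B=\langle D_FBc,Af\rangle .
\]
Specializing to $c=e_k$ requires $e_k\in\mathcal{D}(D_{AB})$. I plan to obtain this by invoking $B(c_{00})\subseteq c_{00}$ as in the preceding propositions, which is what makes $D_{AB}$ densely defined. With $c\in c_{00}$ the series defining $D_FBc$ is finite, and one gets $\langle Bc,d\rangle=\langle Bc,\{\langle Af,f_n\rangle\}\rangle$ coordinatewise. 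The difficulty is to conclude that $\{\langle f,f_n\rangle_A\}\in\ell^2$. Only $B$-weighted information is available, so one knows the coordinates of $C_FAf$ only in directions in $B(c_{00})$.

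\textbf{Main obstacle.} This recovery step is where the argument is weakest. My first approach is to argue through the Hilbert adjoint: $Bd=D_{AB}^*Af$ and $D_{AB}^*=(D_FB)^*\supseteq BD_F^*=BC_F$. When $B$ is bounded, $(D_FB)^*$ relates to $B C_F$, so $Af\in\mathcal{D}(C_F)$ should follow, giving $f\in\mathcal{D}(C_{AB})$. Then, by the uniqueness in Step 1, $D_{AB}^{\#}f=C_{AB}f$ in the reduced sense. I expect that full equality may in fact require $B(c_{00})\subseteq c_{00}$, or injectivity of $B$ on the relevant span. If the general case fails, I will state the proposition under that hypothesis and, for the remaining case, prove only the inclusion from Step 1.
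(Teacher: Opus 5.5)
\textbf{Verdict: genuine gap; the statement as written is false.} Your Step 1 is sound, but it proves $P_BC_{AB}\subseteq D_{AB}^{\#}$, not $C_{AB}\subseteq D_{AB}^{\#}$. The value $D_{AB}^{\#}f$ is required to lie in $\overline{\mathcal{R}(B)}$, so ``identifying'' $C_{AB}f$ with $P_BC_{AB}f$ modulo $\mathcal{N}(B)$ changes the statement rather than proving it.

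The real gap is Step 2, and no argument can close it: the reverse inclusion is false under the stated hypotheses, even when $B(c_{00})\subseteq c_{00}$. The paper's own later example shows this. Take $H=\ell^2$, $A=I$, $f_n=ne_n$, and $B$ the orthogonal projection onto $\overline{\operatorname{span}}\{e_{2n}:n\in\mathbb{N}\}$. Both $C_{AB}$ and $D_{AB}$ contain $c_{00}$ in their domains, so both are densely defined, and $B(c_{00})\subseteq c_{00}$. Moreover $D_{AB}=\operatorname{diag}(0,2,0,4,\ldots)$ is self-adjoint with range in $\mathcal{R}(B)$, so $D_{AB}^{\#}=D_{AB}$. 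Then $D_{AB}^{\#}e_1=0\neq e_1=C_{AB}e_1$. Also, the vector $f=(\xi_n)$ with $\xi_{2n}=0$, $\xi_{2n-1}=1/(2n-1)$ lies in $\mathcal{D}(D_{AB}^{\#})$ but not in $\mathcal{D}(C_{AB})$.

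Your ``main obstacle'' argument uses $(D_FB)^*\supseteq BD_F^*=BC_F$, which points the wrong way. To conclude $Af\in\mathcal{D}(C_F)$ you would need $(D_FB)^*\subseteq BC_F$. The product rule gives equality when the left factor $D_F$ is bounded, not when $B$ is. In the example, $(D_FB)^*=D_{AB}$ has strictly larger domain than $BC_F$. Likewise, testing with $c=e_k$ only yields $\langle e_k,d\rangle_B=\overline{(Bd)_k}$, i.e.\ information about $Bd$. The components of $\{\langle f,f_n\rangle_A\}_{n\in\mathbb{N}}$ along $\mathcal{N}(B)$ (the odd coordinates in the example) are never seen.

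The paper's proof has the same defect. Its converse ``compares'' $\langle D_{AB}c,f\rangle_A=\langle c,d\rangle_B$ with $\langle D_{AB}c,f\rangle_A=\langle c,C_{AB}f\rangle_B$, but the latter was derived only for $f\in\mathcal{D}(C_{AB})$, so the step is circular. It also ignores that $d$ is pinned down only inside $\overline{\mathcal{R}(B)}$. What holds in general is the inclusion $P_BC_{AB}\subseteq D_{AB}^{\#}$ from your Step 1, which the paper re-proves later.

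Your instinct about injectivity is the right one, but it must be paired with the ability to reach each $e_k$ through $B$. Suppose $B$ is injective (so $P_B=I$) and $e_k\in\mathcal{R}(B)$ for every $k$; pick $c^{(k)}$ with $Bc^{(k)}=e_k$. Then $c^{(k)}\in\mathcal{D}(D_{AB})$ and $D_{AB}c^{(k)}=f_k$. For $f\in\mathcal{D}(D_{AB}^{\#})$ with $d=D_{AB}^{\#}f$ one gets $\overline{d_k}=\langle c^{(k)},d\rangle_B=\langle D_{AB}c^{(k)},f\rangle_A=\langle f_k,f\rangle_A$. Hence $d=\{\langle f,f_k\rangle_A\}_{k\in\mathbb{N}}\in\ell^2$, so $f\in\mathcal{D}(C_{AB})$ and $D_{AB}^{\#}f=C_{AB}f$. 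Under such a hypothesis the equality holds; without one, the most you can claim is the inclusion.
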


\begin{proof}
Let $f\in\mathcal{D}(C_{AB})$ and $c\in\mathcal{D}(D_{AB})$. The weighted analysis--synthesis identity is
\begin{align*}
\langle D_{AB}c,f\rangle_A
&=\left\langle\sum_{n=1}^{\infty}(Bc)_nf_n,f\right\rangle_A\\
&=\langle Bc,C_{AB}f\rangle_{\ell^2}
=\langle c,C_{AB}f\rangle_B.
\end{align*}
Equivalently,
\[
D_{AB}^*Af=BC_{AB}f.
\]
Thus $\mathcal{R}(D_{AB}^*A)\subseteq\mathcal{R}(B)$, and the reduced
$(A,B)$-adjoint $D_{AB}^{\#}$ exists. The same identity shows that
$C_{AB}\subseteq D_{AB}^{\#}$.

Conversely, let $f\in\mathcal{D}(D_{AB}^{\#})$. There is a coefficient vector
$d\in\ell^2$ such that
\[
\langle D_{AB}c,f\rangle_A=\langle c,d\rangle_B,
\qquad c\in\mathcal{D}(D_{AB}).
\]
Comparing this identity with the computation above and using the density of
$\mathcal{D}(D_{AB})$ gives $d=C_{AB}f$. Hence
$f\in\mathcal{D}(C_{AB})$ and $D_{AB}^{\#}f=C_{AB}f$. Therefore,
$D_{AB}^{\#}=C_{AB}$.
\end{proof}
\begin{prop}
Let $F=\{f_n\}_{n\in\mathbb{N}}\subset H$. If $\left\|\{\langle Af_n,f_j\rangle_A\}_{j\in\mathbb{N}}\right\|_B<\infty$ for every $n\in\mathbb{N}$, then the $AB$-analysis operator $C_{AB}$ is densely defined.
\end{prop}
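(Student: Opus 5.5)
The plan is to exhibit an explicit subspace of $\mathcal{D}(C_{AB})$ and show it is dense in $H$. Two families of vectors are natural candidates.

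\emph{First family: the null space of $A$.} If $f\in\mathcal{N}(A)$, then $\langle f,f_j\rangle_A=\langle Af,f_j\rangle=0$ for every $j$. Hence $C_{AB}f=0\in\ell^2$ and $\mathcal{N}(A)\subseteq\mathcal{D}(C_{AB})$.

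\emph{Second family: the vectors $Af_n$.} Here
\[
C_{AB}(Af_n)=\{\langle A^2f_n,f_j\rangle\}_{j\in\mathbb{N}}=\{\langle Af_n,f_j\rangle_A\}_{j\in\mathbb{N}}.
\]
The hypothesis $\bigl\|\{\langle Af_n,f_j\rangle_A\}_{j\in\mathbb{N}}\bigr\|_B<\infty$ is exactly finiteness of the weighted coefficient norm of $C_{AB}(Af_n)$. I will read it, as the statement evidently intends, as saying that these coefficient sequences lie in the weighted coefficient space in which $C_{AB}$ takes values, so that $Af_n\in\mathcal{D}(C_{AB})$. By linearity of $\mathcal{D}(C_{AB})$ this gives
\[
\mathcal{N}(A)+\operatorname{span}\{Af_n:n\in\mathbb{N}\}\subseteq\mathcal{D}(C_{AB}).
\]

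\emph{Density.} It remains to show that every $g\perp\mathcal{D}(C_{AB})$ is zero. From $g\perp\mathcal{N}(A)$ we get $g\in\overline{\mathcal{R}(A)}$. From $g\perp Af_n$ and self-adjointness of $A$ we get $\langle Ag,f_n\rangle=0$ for all $n$, that is, $g\in\mathcal{N}(C_{AB})$ in the sense $\langle g,f_n\rangle_A=0$ for every $n$. To conclude $g=0$, I would use that $C_{AB}$ is closed (Proposition \ref{closedoperator}), so its domain is a genuine operator domain on which a graph argument applies. I would also use the $A$-orthogonality statement $\mathcal{R}(F)^{\perp_A}=\mathcal{N}(F^\#)$ proved above, applied to $F=D_{AB}$ with $D_{AB}^\#=C_{AB}$ (Proposition \ref{Aadj}). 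This identifies the set of such $g$ with $\mathcal{R}(D_{AB})^{\perp_A}$.

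\emph{Main obstacle.} Once $g\in\overline{\mathcal{R}(A)}$ and $Ag\perp f_n$ for all $n$, the expected argument is that $Ag$ must vanish. Then $g\in\mathcal{N}(A)\cap\overline{\mathcal{R}(A)}=\{0\}$. Deducing $Ag=0$, however, requires the vectors $f_n$ (or at least their $A$-images) to be complete in $\overline{\mathcal{R}(A)}$, and this is not among the stated hypotheses. I would first try to extract it from the $A$-orthogonality route above. If that fails, the honest version of the proof has two options. One is to add $A$-completeness of $\{f_n\}$ as a hypothesis, a mild condition standard in the classical unweighted analogue. The other is to weaken the conclusion: $\mathcal{D}(C_{AB})$ is then dense in $\mathcal{N}(A)\oplus\overline{\operatorname{span}}\{Af_n\}$, and hence in $H$ whenever $\overline{\operatorname{span}}\{Af_n\}=\overline{\mathcal{R}(A)}$.
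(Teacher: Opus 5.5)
Your proposal stops short of a proof, and the obstacle you identify is not real: the statement needs no completeness hypothesis. The missing observation is one you nearly state yourself. If $g\in H$ satisfies $\langle g,f_n\rangle_A=0$ for every $n$, then $C_{AB}g=0\in\ell^2$. So $g\in\mathcal{D}(C_{AB})$ by the very definition of the natural domain. Put $M=\operatorname{span}\{Af_n:n\in\mathbb{N}\}$. Self-adjointness of $A$ gives $\langle g,Af_n\rangle=\langle g,f_n\rangle_A$, so the observation says exactly that $M^\perp\subseteq\mathcal{D}(C_{AB})$. Together with $M\subseteq\mathcal{D}(C_{AB})$ (your second family), the domain contains $M+M^\perp$, which is dense in $H$. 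This is the paper's argument.

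In your own formulation, you showed that $g\perp\mathcal{D}(C_{AB})$ forces $\langle g,f_n\rangle_A=0$ for all $n$. You even wrote $g\in\mathcal{N}(C_{AB})$, but you did not use that this places $g$ inside the very domain it is orthogonal to. Hence $\langle g,g\rangle=0$ and $g=0$. Completeness of $\{Af_n\}$ would matter only if you insisted that $\operatorname{span}\{Af_n\}$ alone be dense. The vectors invisible to the sequence supply the rest for free. Your first family is redundant, since $\mathcal{N}(A)\subseteq M^\perp$.

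The detour you contemplate would not have helped in any case. Proposition~\ref{Aadj} assumes that $C_{AB}$ and $D_{AB}$ are already densely defined. Invoking $D_{AB}^{\#}=C_{AB}$ to prove density of $\mathcal{D}(C_{AB})$ is therefore circular. Closedness of $C_{AB}$ (Proposition~\ref{closedoperator}) says nothing about density of its domain. Consequently both of your fallbacks are unnecessary: you need neither $A$-completeness of $\{f_n\}$ as an extra hypothesis nor a weakened conclusion. The proposition holds exactly as stated.
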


\begin{proof}
Let $M=\operatorname{span}\{Af_n:n\in\mathbb{N}\}$. We first show that $M\subseteq \mathcal{D}(C_{AB})$. Let $f\in M$. Then there exist $m\in\mathbb{N}$ and scalars $\alpha_1,\ldots,\alpha_m$ such that $f=\sum_{k=1}^{m}\alpha_kAf_k$. For every $j\in\mathbb{N}$, we have $\langle f,f_j\rangle_A=\sum_{k=1}^{m}\alpha_k\langle Af_k,f_j\rangle_A$. Hence, $\{\langle f,f_j\rangle_A\}_{j\in\mathbb{N}}=\sum_{k=1}^{m}\alpha_k\{\langle Af_k,f_j\rangle_A\}_{j\in\mathbb{N}}$. By hypothesis, each sequence $\{\langle Af_k,f_j\rangle_A\}_{j\in\mathbb{N}}$ belongs to $(\ell^2,\langle\cdot,\cdot\rangle_B)$. Since $(\ell^2,\langle\cdot,\cdot\rangle_B)$ is a linear space, it follows that $\{\langle f,f_j\rangle_A\}_{j\in\mathbb{N}}\in(\ell^2,\langle\cdot,\cdot\rangle_B)$. Therefore, $f\in \mathcal{D}(C_{AB})$ and hence $M\subseteq \mathcal{D}(C_{AB})$.

Now let $f\in M^\perp$. Then $\langle f,Af_n\rangle=0$ for every $n\in\mathbb{N}$. Since $A=A^*$, we obtain $\langle f,f_n\rangle_A=\langle Af,f_n\rangle=\langle f,Af_n\rangle=0$ for every $n\in\mathbb{N}$. Hence, $C_{AB}f=\{0\}_{n\in\mathbb{N}}\in(\ell^2,\langle\cdot,\cdot\rangle_B)$ and therefore $f\in \mathcal{D}(C_{AB})$. Thus, $M^\perp\subseteq \mathcal{D}(C_{AB})$.

Since $M+M^\perp$ is dense in $H$, we conclude that $\mathcal{D}(C_{AB})$ is dense in $H$. Therefore, $C_{AB}$ is densely defined.

\end{proof}
\begin{prop}\label{dense-frame-operator}
Let $F=\{f_n\}_{n\in\mathbb{N}}\subset H$. If $Af_n\in \mathcal{D}(S_{AB})$ for every $n\in\mathbb{N}$, then the $AB$-frame operator $S_{AB}$ is densely defined.
\end{prop}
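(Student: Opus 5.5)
The plan is to repeat the argument used for the density of $\mathcal{D}(C_{AB})$ in the preceding proposition. Set $M=\operatorname{span}\{Af_n:n\in\mathbb{N}\}$. I will show two things: $M\subseteq\mathcal{D}(S_{AB})$ and $M^\perp\subseteq\mathcal{D}(S_{AB})$. Since $\mathcal{D}(S_{AB})$ is a linear subspace, it then contains $M+M^\perp$. This sum is dense in $H$ because $H=\overline{M}\oplus M^\perp$. So $S_{AB}$ is densely defined.

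\textbf{Step 1: $M\subseteq\mathcal{D}(S_{AB})$.} The domain
\[
\mathcal{D}(S_{AB})=\{f\in\mathcal{D}(C_{AB}):C_{AB}f\in\mathcal{D}(D_{AB})\}
\]
is a linear subspace. Indeed, $\mathcal{D}(C_{AB})$ is linear, $C_{AB}$ is linear on it, and $\mathcal{D}(D_{AB})=\{c:Bc\in\mathcal{D}(D_F)\}$ is linear because $B$ is linear and $\mathcal{D}(D_F)$ is a subspace. The hypothesis $Af_n\in\mathcal{D}(S_{AB})$ then gives every finite linear combination $\sum_{k=1}^m\alpha_kAf_k\in\mathcal{D}(S_{AB})$ directly.

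\textbf{Step 2: $M^\perp\subseteq\mathcal{D}(S_{AB})$.} Let $f\perp Af_n$ for all $n$. Self-adjointness of $A$ gives
\[
\langle f,f_n\rangle_A=\langle Af,f_n\rangle=\langle f,Af_n\rangle=0
\]
for every $n$. Hence $C_{AB}f=0\in\ell^2$, so $f\in\mathcal{D}(C_{AB})$. Next, $0\in\mathcal{D}(D_{AB})$, since $B0=0$ and the zero series converges. So $C_{AB}f\in\mathcal{D}(D_{AB})$, and therefore $f\in\mathcal{D}(S_{AB})$ with $S_{AB}f=0$.

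\textbf{Step 3: density.} $\mathcal{D}(S_{AB})$ is a subspace containing both $M$ and $M^\perp$, so it contains $M+M^\perp$. Every $h\in H$ decomposes as $h=P_{\overline M}h+(I-P_{\overline M})h$, where the first term is a limit of elements of $M$ and the second lies in $M^\perp$. Hence $M+M^\perp$ is dense in $H$, and so is $\mathcal{D}(S_{AB})$.

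No step is a genuine obstacle. The only point needing care is the linearity of $\mathcal{D}(S_{AB})$ in Step 1. This is what lets the hypothesis, stated only for the individual vectors $Af_n$, extend to their span.
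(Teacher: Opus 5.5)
Your proof is correct and follows the paper's argument: take $M=\operatorname{span}\{Af_n:n\in\mathbb{N}\}$, use linearity of $\mathcal{D}(S_{AB})$ to get $M\subseteq\mathcal{D}(S_{AB})$, note that $C_{AB}$ vanishes on $M^\perp$ so that $M^\perp\subseteq\mathcal{D}(S_{AB})$, and conclude from the density of $M+M^\perp$. Your explicit checks that $\mathcal{D}(S_{AB})$ is a subspace and that $0\in\mathcal{D}(D_{AB})$ fill in details the paper only asserts.
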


\begin{proof}
Let $M=\operatorname{span}\{Af_n:n\in\mathbb{N}\}$. Since $\mathcal{D}(S_{AB})$ is a linear subspace of $H$ and $Af_n\in \mathcal{D}(S_{AB})$ for every $n\in\mathbb{N}$, it follows that $M\subseteq \mathcal{D}(S_{AB})$. Now let $f\in M^\perp$. Then $\langle f,Af_n\rangle=0$ for every $n\in\mathbb{N}$. Since $A=A^*$, we obtain $\langle f,f_n\rangle_A=\langle Af,f_n\rangle=\langle f,Af_n\rangle=0$ for every $n\in\mathbb{N}$. Hence, $S_{AB}f=0$ and therefore $f\in \mathcal{D}(S_{AB})$. Consequently, $M^\perp\subseteq \mathcal{D}(S_{AB})$. Thus, $M+M^\perp\subseteq \mathcal{D}(S_{AB})$. Since $M+M^\perp$ is dense in $H$, it follows that $\mathcal{D}(S_{AB})$ is dense in $H$. Therefore, $S_{AB}$ is densely defined.
\end{proof}
\begin{prop}
Let $F=\{f_n\}_{n\in\mathbb{N}}\subset H$. Let $C_{AB}=C_FA$ and $D_{AB}=D_FB$ be densely defined on their natural domains, and suppose that $S_{AB}=D_{AB}C_{AB}$ is densely defined. Assume that $C_{AB}^{\#}$, $D_{AB}^{\#}$, and $S_{AB}^{\#}$ exist. Then
\begin{enumerate}
\item $P_BC_{AB}\subseteq D_{AB}^{\#}$.
\item $P_AD_{AB}P_B\subseteq C_{AB}^{\#}$.
\item $C_{AB}^{\#}D_{AB}^{\#}\subseteq S_{AB}^{\#}$.
\item $P_AS_{AB}\subseteq S_{AB}^{\#}$.
\end{enumerate}
\end{prop}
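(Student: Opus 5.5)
All four inclusions will follow one template: show that two candidate operators agree after multiplication by the weight $A$ or $B$, and then use the uniqueness principle $\mathcal{N}(A)\cap\overline{\mathcal{R}(A)}=\{0\}$ (respectively $\mathcal{N}(B)\cap\overline{\mathcal{R}(B)}=\{0\}$) together with the range condition in the definition of the reduced weighted adjoint. Here $D_{AB}^{\#}$ is the mixed adjoint characterized by $BD_{AB}^{\#}=D_{AB}^*A$ with $\mathcal{R}(D_{AB}^{\#})\subseteq\overline{\mathcal{R}(B)}$. Likewise $C_{AB}^{\#}$ is characterized by $AC_{AB}^{\#}=C_{AB}^*B$ with $\mathcal{R}(C_{AB}^{\#})\subseteq\overline{\mathcal{R}(A)}$. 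Finally $S_{AB}^{\#}$ is characterized by $AS_{AB}^{\#}=S_{AB}^*A$ with range in $\overline{\mathcal{R}(A)}$.

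For (1), I take $f\in\mathcal{D}(C_{AB})$ and reuse the weighted analysis--synthesis identity from the proof of Proposition~\ref{Aadj}:
\[
\langle D_{AB}c,f\rangle_A=\langle c,C_{AB}f\rangle_B,\qquad c\in\mathcal{D}(D_{AB}).
\]
This gives $Af\in\mathcal{D}(D_{AB}^*)$ and $D_{AB}^*Af=BC_{AB}f=BP_BC_{AB}f$. Hence $f\in\mathcal{D}(D_{AB}^{\#})$ and $B\bigl(D_{AB}^{\#}f-P_BC_{AB}f\bigr)=0$. Both $D_{AB}^{\#}f$ and $P_BC_{AB}f$ lie in $\overline{\mathcal{R}(B)}$, so their difference is zero.

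For (2), I take $c\in\mathcal{D}(D_{AB}P_B)$, so $P_Bc\in\mathcal{D}(D_{AB})$. The same identity gives, for every $f\in\mathcal{D}(C_{AB})$,
\[
\langle C_{AB}f,c\rangle_B=\langle C_{AB}f,P_Bc\rangle_B=\langle f,D_{AB}P_Bc\rangle_A .
\]
Here I use $BP_B=B$. The conclusion is $Bc\in\mathcal{D}(C_{AB}^*)$ and $C_{AB}^*Bc=AD_{AB}P_Bc=AP_AD_{AB}P_Bc$. Uniqueness in $\overline{\mathcal{R}(A)}$ then yields $C_{AB}^{\#}c=P_AD_{AB}P_Bc$.

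Part (3) is the product rule in the mixed setting, proved exactly as in part (1) of the earlier product theorem. For $f\in\mathcal{D}(C_{AB}^{\#}D_{AB}^{\#})$ I compute
\[
AC_{AB}^{\#}D_{AB}^{\#}f=C_{AB}^*BD_{AB}^{\#}f=C_{AB}^*D_{AB}^*Af .
\]
Then I use $C_{AB}^*D_{AB}^*\subseteq(D_{AB}C_{AB})^*=S_{AB}^*$, which is valid since $S_{AB}$ is densely defined, to get $AC_{AB}^{\#}D_{AB}^{\#}f=AS_{AB}^{\#}f$, and conclude by uniqueness.

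For (4), the natural route is to combine the previous parts. From (1) and (2) with $D_{AB}\subseteq D_{AB}P_B$ on suitable vectors, I get $P_AS_{AB}=P_AD_{AB}C_{AB}$. I expect $P_AD_{AB}C_{AB}\subseteq P_AD_{AB}P_BC_{AB}$ to require care. The problem is that $D_{AB}c=D_FBc=D_FBP_Bc$ shows $\mathcal{D}(D_{AB})=\mathcal{D}(D_{AB}P_B)$ and $D_{AB}=D_{AB}P_B$; this identity is the key observation and settles it. Then (2) and (1) give
\[
P_AS_{AB}\subseteq C_{AB}^{\#}P_BC_{AB}\subseteq C_{AB}^{\#}D_{AB}^{\#},
\]
and (3) finishes. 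If domain bookkeeping fails there, I would argue directly instead. For $f\in\mathcal{D}(S_{AB})$ and $g\in\mathcal{D}(S_{AB})$, I have
\[
\langle S_{AB}g,f\rangle_A=\langle C_{AB}g,C_{AB}f\rangle_B=\langle g,S_{AB}f\rangle_A .
\]
So $Af\in\mathcal{D}(S_{AB}^*)$ with $S_{AB}^*Af=AS_{AB}f=AP_AS_{AB}f$, and uniqueness gives $S_{AB}^{\#}f=P_AS_{AB}f$. The main obstacle throughout is precisely this domain tracking, namely that $P_B$ does not change $D_{AB}$ because $BP_B=B$.
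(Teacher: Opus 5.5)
Your proposal is correct and follows the paper's proof essentially step for step. It uses the same uniqueness argument in $\overline{\mathcal{R}(B)}$ and $\overline{\mathcal{R}(A)}$ for (1)--(2), and the inclusion $C_{AB}^*D_{AB}^*\subseteq S_{AB}^*$ for (3). For (4) it runs the same chain through $C_{AB}^{\#}P_BC_{AB}$, using $\mathcal{D}(D_{AB}P_B)=\mathcal{D}(D_{AB})$ and $D_{AB}P_B=D_{AB}$. Your fallback direct argument for (4), via $\langle S_{AB}g,f\rangle_A=\langle g,S_{AB}f\rangle_A$ on the dense domain $\mathcal{D}(S_{AB})$, is also valid; it is exactly the argument the paper gives in the proof of the subsequent lower semi-frame theorem.
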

\begin{proof}
(1) Let $f\in\mathcal{D}(C_{AB})$. For every $c\in\mathcal{D}(D_{AB})$, since $C_{AB}=C_FA$ and $D_{AB}=D_FB$, we have $\langle D_{AB}c,f\rangle_A=\langle D_FBc,Af\rangle=\langle Bc,C_FAf\rangle=\langle c,C_{AB}f\rangle_B$. Therefore, $Af\in\mathcal{D}(D_{AB}^*)$ and $D_{AB}^*Af=BC_{AB}f$. Since $D_{AB}^{\#}$ exists, it satisfies $D_{AB}^*A=BD_{AB}^{\#}$. Hence $BD_{AB}^{\#}f=BC_{AB}f=BP_BC_{AB}f$. Both $D_{AB}^{\#}f$ and $P_BC_{AB}f$ belong to $\overline{\mathcal{R}(B)}$. Since $\mathcal{N}(B)\cap\overline{\mathcal{R}(B)}=\{0\}$, it follows that $D_{AB}^{\#}f=P_BC_{AB}f$. Thus, $P_BC_{AB}\subseteq D_{AB}^{\#}$.

(2) Let $c\in\mathcal{D}(D_{AB})$. For every $f\in\mathcal{D}(C_{AB})$, we have $\langle C_{AB}f,c\rangle_B=\langle C_FAf,Bc\rangle=\langle Af,D_FBc\rangle=\langle f,D_{AB}c\rangle_A$. Therefore, $Bc\in\mathcal{D}(C_{AB}^*)$ and $C_{AB}^*Bc=AD_{AB}c$. Since $C_{AB}^{\#}$ exists, it satisfies $C_{AB}^*B=AC_{AB}^{\#}$. Consequently, $AC_{AB}^{\#}c=AD_{AB}c=AP_AD_{AB}c$. Both $C_{AB}^{\#}c$ and $P_AD_{AB}c$ belong to $\overline{\mathcal{R}(A)}$, and hence $C_{AB}^{\#}c=P_AD_{AB}c$. Moreover, since $BP_B=B$, we have $\mathcal{D}(D_{AB}P_B)=\mathcal{D}(D_{AB})$ and $D_{AB}P_Bc=D_{AB}c$. Therefore, $P_AD_{AB}P_B\subseteq C_{AB}^{\#}$.

(3) Let $f\in\mathcal{D}(C_{AB}^{\#}D_{AB}^{\#})$. Then $f\in\mathcal{D}(D_{AB}^{\#})$ and $D_{AB}^{\#}f\in\mathcal{D}(C_{AB}^{\#})$. Using the defining weighted-adjoint equations, we obtain $AC_{AB}^{\#}D_{AB}^{\#}f=C_{AB}^*BD_{AB}^{\#}f=C_{AB}^*D_{AB}^*Af$. Since $(D_{AB}C_{AB})^*\supseteq C_{AB}^*D_{AB}^*$, it follows that $AC_{AB}^{\#}D_{AB}^{\#}f=S_{AB}^*Af$. Thus, $f\in\mathcal{D}(S_{AB}^{\#})$ and, by the uniqueness of the reduced solution, $C_{AB}^{\#}D_{AB}^{\#}f=S_{AB}^{\#}f$. Therefore, $C_{AB}^{\#}D_{AB}^{\#}\subseteq S_{AB}^{\#}$.

(4) Let $f\in\mathcal{D}(S_{AB})$. Then $f\in\mathcal{D}(C_{AB})$ and $C_{AB}f\in\mathcal{D}(D_{AB})$. By part $(1)$, $D_{AB}^{\#}f=P_BC_{AB}f$. Since $\mathcal{D}(D_{AB})$ is invariant under $P_B$ and $D_{AB}P_B=D_{AB}$, part $(2)$ gives $D_{AB}^{\#}f\in\mathcal{D}(C_{AB}^{\#})$ and $C_{AB}^{\#}D_{AB}^{\#}f=P_AD_{AB}P_BC_{AB}f=P_AD_{AB}C_{AB}f=P_AS_{AB}f$. By part $(3)$, $C_{AB}^{\#}D_{AB}^{\#}\subseteq S_{AB}^{\#}$. Hence $P_AS_{AB}f=S_{AB}^{\#}f$ for every $f\in\mathcal{D}(S_{AB})$, and therefore $P_AS_{AB}\subseteq S_{AB}^{\#}$.
\end{proof}
\begin{example}
The inclusion $P_BC_{AB}\subseteq D_{AB}^{\#}$ can be proper. Let
$H=\ell^2$, let $f_n=ne_n$, take $A=I_H$, and let $B$ be the orthogonal
projection onto
\[
\overline{\operatorname{span}}\{e_{2n}:n\in\mathbb{N}\}.
\]
For $f=(\xi_n)_{n\in\mathbb{N}}$, one has
\[
\mathcal{D}(C_{AB})
=\{f\in\ell^2:(n\xi_n)_{n\in\mathbb{N}}\in\ell^2\},
\qquad
C_{AB}f=(n\xi_n)_{n\in\mathbb{N}}.
\]
Moreover,
\begin{align*}
\mathcal{D}(D_{AB})
&=\{f\in\ell^2:(2n\xi_{2n})_{n\in\mathbb{N}}\in\ell^2\}, \text{ and }\\
D_{AB}f
&=(0,2\xi_2,0,4\xi_4,0,6\xi_6,\ldots).
\end{align*}
This operator is positive and self-adjoint, so $D_{AB}^{\#}=D_{AB}$. On
$\mathcal{D}(C_{AB})$, the operator $P_BC_{AB}$ has the same action as
$D_{AB}^{\#}$.

Now define $f=(\xi_n)_{n\in\mathbb{N}}$ by
\[
\xi_{2n}=0,
\qquad
\xi_{2n-1}=\frac{1}{2n-1}.
\]
Then $f\in\mathcal{D}(D_{AB}^{\#})$ and $D_{AB}^{\#}f=0$, whereas
$f\notin\mathcal{D}(C_{AB})$. Hence
\[
P_BC_{AB}\subsetneq D_{AB}^{\#}.
\]
\end{example}

\begin{example}
The inclusion $P_AD_{AB}P_B\subseteq C_{AB}^{\#}$ can also be proper. Let
$H=\ell^2$, let $f_n=ne_n$, take $B=I_{\ell^2}$, and let $A$ be the
orthogonal projection onto
\[
\overline{\operatorname{span}}\{e_{2n}:n\in\mathbb{N}\}.
\]
For $f=(\xi_n)_{n\in\mathbb{N}}$,
\begin{align*}
\mathcal{D}(C_{AB})
&=\{f\in\ell^2:(2n\xi_{2n})_{n\in\mathbb{N}}\in\ell^2\},\\
C_{AB}f
&=(0,2\xi_2,0,4\xi_4,0,6\xi_6,\ldots).
\end{align*}
The operator $C_{AB}$ is positive and self-adjoint, and therefore
$C_{AB}^{\#}=C_{AB}$. In contrast,
\[
\mathcal{D}(D_{AB})
=\{f\in\ell^2:(n\xi_n)_{n\in\mathbb{N}}\in\ell^2\},
\text{ and }
D_{AB}f=(n\xi_n)_{n\in\mathbb{N}}.
\]
Thus $P_AD_{AB}P_B$ agrees with $C_{AB}^{\#}$ only on
$\mathcal{D}(D_{AB})$.

For the vector defined by
\[
\xi_{2n}=0,
\qquad
\xi_{2n-1}=\frac{1}{2n-1},
\]
one has $f\in\mathcal{D}(C_{AB}^{\#})$ but
$f\notin\mathcal{D}(D_{AB})$. Consequently,
\[
P_AD_{AB}P_B\subsetneq C_{AB}^{\#}.
\]
\end{example}

\begin{example}
Both inclusions
\[
C_{AB}^{\#}D_{AB}^{\#}\subseteq S_{AB}^{\#},
\qquad
P_AS_{AB}\subseteq S_{AB}^{\#},
\]
may be proper. Let $H=\ell^2$, let $f_n=ne_n$, take $A=I_H$, and define
\[
Be_n=\frac{1}{n^2}e_n,
\qquad n\in\mathbb{N}.
\]
For $f=(\xi_n)_{n\in\mathbb{N}}$,
\begin{align*}
\mathcal{D}(C_{AB})
&=\{f\in\ell^2:(n\xi_n)_{n\in\mathbb{N}}\in\ell^2\},
& C_{AB}f&=(n\xi_n)_{n\in\mathbb{N}},\\
\mathcal{D}(D_{AB})
&=\ell^2,
& D_{AB}f&=(\xi_n/n)_{n\in\mathbb{N}}.
\end{align*}
The reduced adjoints satisfy
\begin{align*}
\mathcal{D}(C_{AB}^{\#})&=\ell^2, 
& C_{AB}^{\#}f&=(\xi_n/n)_{n\in\mathbb{N}},\\
\mathcal{D}(D_{AB}^{\#})
&=\{f\in\ell^2:(n\xi_n)_{n\in\mathbb{N}}\in\ell^2\},
& D_{AB}^{\#}f&=(n\xi_n)_{n\in\mathbb{N}}.
\end{align*}
Furthermore, $S_{AB}$ is the identity on $\mathcal{D}(C_{AB})$, whereas
$S_{AB}^{\#}=I_H$ on all of $H$. Hence
\[
C_{AB}^{\#}D_{AB}^{\#}
=I_H\big|_{\mathcal{D}(D_{AB}^{\#})}
\subsetneq I_H=S_{AB}^{\#},
\]
and, because $P_A=I_H$,
\[
P_AS_{AB}=I_H\big|_{\mathcal{D}(C_{AB})}
\subsetneq I_H=S_{AB}^{\#}.
\]
\end{example}

\begin{prop}
Let 
$F=\{f_n\}_{n\in\mathbb{N}}$ be a Bessel sequence in $H$. Then the reduced
weighted adjoints of $C_{AB}$, $D_{AB}$, and $S_{AB}$ exist and satisfy
\[
C_{AB}^{\#}=P_AD_{AB},
\qquad
D_{AB}^{\#}=P_BC_{AB},
\qquad
S_{AB}^{\#}=P_AS_{AB}.
\]
\end{prop}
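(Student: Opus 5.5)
Since $F$ is Bessel, the whole argument reduces to the bounded setting. The classical analysis, synthesis and frame operators $C_F$, $D_F$, $S_F$ are then bounded and everywhere defined, with $D_F=C_F^*$ and $S_F=D_FC_F$. Because $A$ and $B$ are bounded, the weighted operators $C_{AB}=C_FA$, $D_{AB}=D_FB$ and $S_{AB}=D_FBC_FA$ are also bounded with full domains $H$ and $\ell^2$. Throughout, I read $C_{AB}^{\#}$ as the reduced solution $X$ of $AX=C_{AB}^*B$ with $\mathcal{R}(X)\subseteq\overline{\mathcal{R}(A)}$. Similarly, $D_{AB}^{\#}$ solves $BY=D_{AB}^*A$ with range in $\overline{\mathcal{R}(B)}$, and $S_{AB}^{\#}$ solves $AZ=S_{AB}^*A$ with range in $\overline{\mathcal{R}(A)}$.

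First I compute the Hilbert adjoints. These are now honest equalities, by the bounded case of the preliminary proposition on adjoints of products. This gives
\[
C_{AB}^*=AC_F^*=AD_F,\qquad D_{AB}^*=BD_F^*=BC_F,\qquad S_{AB}^*=AC_F^*BD_F^*\cdots
\]
More usefully, the identity computed in part (1) of the preceding inclusion proposition gives $D_{AB}^*A=BC_FA=BC_{AB}$, and part (2) gives $C_{AB}^*B=AD_FB=AD_{AB}$. Both identities now hold on all of $H$ and $\ell^2$. From these, $\mathcal{R}(D_{AB}^*A)\subseteq\mathcal{R}(B)$ and $\mathcal{R}(C_{AB}^*B)\subseteq\mathcal{R}(A)$, so $D_{AB}^{\#}$ and $C_{AB}^{\#}$ exist by Theorems \ref{douglas} and \ref{banach}.

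The candidates $P_BC_{AB}$ and $P_AD_{AB}$ satisfy the defining equations, since $BP_B=B$ and $AP_A=A$. They also have ranges in the correct closed subspaces. Uniqueness of the reduced solution then gives the equalities. Concretely, the difference of two solutions lies in $\mathcal{N}(A)\cap\overline{\mathcal{R}(A)}=\{0\}$, or in the corresponding intersection for $B$. Equivalently, the inclusions $P_BC_{AB}\subseteq D_{AB}^{\#}$ and $P_AD_{AB}P_B\subseteq C_{AB}^{\#}$ from the preceding proposition become equalities, because the left-hand sides are already everywhere defined. For $C_{AB}^{\#}$ one also notes that $D_{AB}P_B=D_FBP_B=D_{AB}$.

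For $S_{AB}$, I use that $S_{AB}^*=(D_{AB}C_{AB})^*=C_{AB}^*D_{AB}^*$, which holds because both factors are bounded. Hence
\[
S_{AB}^*A=C_{AB}^*D_{AB}^*A=C_{AB}^*BC_{AB}=AD_{AB}C_{AB}=AS_{AB}.
\]
This has range in $\mathcal{R}(A)$, so $S_{AB}^{\#}$ exists. Moreover $P_AS_{AB}$ solves $AZ=S_{AB}^*A$ with range in $\overline{\mathcal{R}(A)}$, so uniqueness gives $S_{AB}^{\#}=P_AS_{AB}$.

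The only delicate point is the bookkeeping of which weight sits on which side in the mixed $(A,B)$-adjoint definitions. In particular, I must check that $C_{AB}^*B=AD_{AB}$ holds on the whole space and not just on a dense domain, since the adjoint identities are only valid once the domains are known to be full. Boundedness of the Bessel operators settles this.
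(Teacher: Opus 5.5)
Your proposal is correct and follows the paper's proof essentially step by step. Like the paper, you use the Bessel hypothesis to make $C_{AB}$, $D_{AB}$, $S_{AB}$ bounded and everywhere defined, derive $C_{AB}^*B=AD_{AB}$, $D_{AB}^*A=BC_{AB}$ and $S_{AB}^*A=AS_{AB}$, and then identify $P_AD_{AB}$, $P_BC_{AB}$, $P_AS_{AB}$ as the reduced solutions via $AP_A=A$, $BP_B=B$ and uniqueness. The one cosmetic difference is that you get $S_{AB}^*A=AS_{AB}$ by chaining the first two identities, whereas the paper expands $(D_FBC_FA)^*A$ directly.
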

\begin{proof}
Recall that the reduced $(A,B)$-adjoint of $T:H\to\ell^2$ is the unique
operator $T^{\#}:\ell^2\to H$ satisfying
\[
T^*B=AT^{\#},
\qquad
\mathcal{R}(T^{\#})\subseteq\overline{\mathcal{R}(A)}.
\]
For $R:\ell^2\to H$, the reduced $(B,A)$-adjoint is the unique operator
$R^{\#}:H\to\ell^2$ satisfying
\[
R^*A=BR^{\#},
\qquad
\mathcal{R}(R^{\#})\subseteq\overline{\mathcal{R}(B)}.
\]
(1) Since $C_{AB}=C_FA$, we have $C_{AB}^*B=(C_FA)^*B=AC_F^*B=AD_FB=AD_{AB}$. Thus, the operator equation $C_{AB}^*B=AX$ has the bounded solution $X=D_{AB}$, and hence the reduced $(A,B)$-adjoint of $C_{AB}$ exists. Since $AP_A=A$, we obtain $A(P_AD_{AB})=AD_{AB}=C_{AB}^*B$. Moreover, $\mathcal{R}(P_AD_{AB})\subseteq\overline{\mathcal{R}(A)}$. Hence $P_AD_{AB}$ is the reduced solution of $C_{AB}^*B=AX$. By the uniqueness of the reduced solution, $C_{AB}^{\#}=P_AD_{AB}$.

(2) Since $D_{AB}=D_FB$, we have $D_{AB}^*A=(D_FB)^*A=BD_F^*A=BC_FA=BC_{AB}$. Thus, the operator equation $D_{AB}^*A=BY$ has the bounded solution $Y=C_{AB}$, and hence the reduced adjoint of $D_{AB}$ exists. Since $BP_B=B$, it follows that $B(P_BC_{AB})=BC_{AB}=D_{AB}^*A$. Furthermore, $\mathcal{R}(P_BC_{AB})\subseteq\overline{\mathcal{R}(B)}$. Therefore, $P_BC_{AB}$ is the reduced solution of $D_{AB}^*A=BY$. By the uniqueness of the reduced solution, $D_{AB}^{\#}=P_BC_{AB}$.

(3) Since $S_{AB}=D_FBC_FA$, we have $S_{AB}^*A=(D_FBC_FA)^*A=AC_F^*BD_F^*A$. Using $C_F^*=D_F$ and $D_F^*=C_F$, we obtain $S_{AB}^*A=AD_FBC_FA=AS_{AB}$. Thus, the operator equation $S_{AB}^*A=AZ$ has the bounded solution $Z=S_{AB}$, and hence the $A$-adjoint of $S_{AB}$ exists. Since $AP_A=A$, we have $A(P_AS_{AB})=AS_{AB}=S_{AB}^*A$. Moreover, $\mathcal{R}(P_AS_{AB})\subseteq\overline{\mathcal{R}(A)}$. Hence $P_AS_{AB}$ is the reduced solution of $S_{AB}^*A=AZ$. By the uniqueness of the reduced solution, $S_{AB}^{\#}=P_AS_{AB}$.
\end{proof}
\begin{prop}\label{nullspace-relations}
Let $F=\{f_n\}_{n\in\mathbb{N}}\subset H$. Assume that $D_{AB}^{\#}$ exists. Then $\mathcal{N}(S_{AB})=\mathcal{N}(P_BC_{AB})=\mathcal{N}(D_{AB}^{\#})\cap\mathcal{D}(C_{AB})$. Moreover, $\mathcal{R}(D_{AB})^{\perp_A}=\mathcal{N}(D_{AB}^{\#})$, and consequently $\mathcal{N}(S_{AB})=\mathcal{R}(D_{AB})^{\perp_A}\cap\mathcal{D}(C_{AB})$. If, in addition, $D_{AB}^{\#}=P_BC_{AB}$, then $\mathcal{N}(S_{AB})=\mathcal{N}(P_BC_{AB})=\mathcal{R}(D_{AB})^{\perp_A}$.
\end{prop}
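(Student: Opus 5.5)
The plan is to prove the chain of equalities one link at a time. Everything rests on the weighted analysis--synthesis identity used in Proposition~\ref{Aadj}:
\[
\langle D_{AB}c,f\rangle_A=\langle c,C_{AB}f\rangle_B,
\qquad f\in\mathcal{D}(C_{AB}),\ c\in\mathcal{D}(D_{AB}).
\]
I will re-derive it at the start and check the limit interchange explicitly. Since $c\in\mathcal{D}(D_{AB})$, the series $\sum_n (Bc)_nf_n$ converges in $H$. Pairing with the fixed vector $Af$ is continuous, so
\[
\Bigl\langle \sum_n (Bc)_nf_n,Af\Bigr\rangle=\sum_n (Bc)_n\overline{\langle f,f_n\rangle_A}.
\]
This equals $\langle Bc,C_{AB}f\rangle_{\ell^2}$ because $C_{AB}f\in\ell^2$. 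No density hypothesis is needed for this step.

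\emph{Step 1: $\mathcal{N}(S_{AB})=\mathcal{N}(P_BC_{AB})$.} Let $f\in\mathcal{N}(S_{AB})$, so $f\in\mathcal{D}(S_{AB})\subseteq\mathcal{D}(C_{AB})$ and $c=C_{AB}f\in\mathcal{D}(D_{AB})$. The identity gives
\[
0=\langle S_{AB}f,f\rangle_A=\langle C_{AB}f,C_{AB}f\rangle_B=\|B^{1/2}C_{AB}f\|^2 .
\]
Hence $C_{AB}f\in\mathcal{N}(B^{1/2})=\mathcal{N}(B)=\overline{\mathcal{R}(B)}^{\perp}$, and so $P_BC_{AB}f=0$. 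Conversely, let $f\in\mathcal{D}(C_{AB})$ with $P_BC_{AB}f=0$. Then $BC_{AB}f=BP_BC_{AB}f=0$. Since $0\in\mathcal{D}(D_F)$, we get $C_{AB}f\in\mathcal{D}(D_{AB})$, so $f\in\mathcal{D}(S_{AB})$ and $S_{AB}f=D_F(BC_{AB}f)=0$.

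\emph{Step 2: $\mathcal{N}(P_BC_{AB})=\mathcal{N}(D_{AB}^{\#})\cap\mathcal{D}(C_{AB})$.} I will show that $D_{AB}^{\#}$ extends $P_BC_{AB}$, repeating part (1) of the earlier proposition. For $f\in\mathcal{D}(C_{AB})$, the identity shows that $Af\in\mathcal{D}(D_{AB}^*)$ with $D_{AB}^*Af=BC_{AB}f$. As $D_{AB}^{\#}$ exists, $BD_{AB}^{\#}f=D_{AB}^*Af=BP_BC_{AB}f$. Both vectors lie in $\overline{\mathcal{R}(B)}$, and $\mathcal{N}(B)\cap\overline{\mathcal{R}(B)}=\{0\}$, so $D_{AB}^{\#}f=P_BC_{AB}f$. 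Restricting kernels to $\mathcal{D}(C_{AB})$ then gives the equality.

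\emph{Step 3: $\mathcal{R}(D_{AB})^{\perp_A}=\mathcal{N}(D_{AB}^{\#})$.} I will run the proof of the orthogonality theorem for $F^{\#}$ in the mixed $(B,A)$ setting. For the first inclusion, let $f\perp_A\mathcal{R}(D_{AB})$. Then $c\mapsto\langle D_{AB}c,Af\rangle$ vanishes, so $Af\in\mathcal{D}(D_{AB}^*)$ and $D_{AB}^*Af=0\in\mathcal{R}(B)$. Hence $f\in\mathcal{D}(D_{AB}^{\#})$, $BD_{AB}^{\#}f=0$, and so $D_{AB}^{\#}f=0$. For the reverse inclusion, let $D_{AB}^{\#}f=0$. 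Then $D_{AB}^*Af=0$, which gives $\langle D_{AB}c,f\rangle_A=0$ for all $c$. Combining Steps 1--3 yields $\mathcal{N}(S_{AB})=\mathcal{R}(D_{AB})^{\perp_A}\cap\mathcal{D}(C_{AB})$. Finally, if $D_{AB}^{\#}=P_BC_{AB}$, then $\mathcal{N}(D_{AB}^{\#})\subseteq\mathcal{D}(D_{AB}^{\#})=\mathcal{D}(C_{AB})$, so the intersection with $\mathcal{D}(C_{AB})$ is redundant.

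The main obstacle I expect is bookkeeping of domains rather than any estimate. Specifically, $\mathcal{N}(S_{AB})$ must be read inside $\mathcal{D}(S_{AB})$, and in Step 1 one must verify that $C_{AB}f\in\mathcal{D}(D_{AB})$ when only $P_BC_{AB}f=0$ is known. The observation $BC_{AB}f=0$ settles this. Apart from that, the argument only needs the limit interchange in the basic identity, which holds because the synthesis series converges in $H$.
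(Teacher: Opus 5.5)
Your proposal is correct and follows the paper's proof essentially step by step. Like the paper, you use the quadratic-form identity $\langle S_{AB}f,f\rangle_A=\|C_{AB}f\|_B^2$ together with $BP_B=B$ for $\mathcal{N}(S_{AB})=\mathcal{N}(P_BC_{AB})$, the extension $P_BC_{AB}\subseteq D_{AB}^{\#}$ for the second equality, and the weighted orthogonality relation for $\mathcal{R}(D_{AB})^{\perp_A}=\mathcal{N}(D_{AB}^{\#})$. The only difference is that you re-derive the last two facts directly in the mixed $(B,A)$ setting, assuming only that $D_{AB}^{\#}$ exists, whereas the paper cites earlier results stated for a single weight or under extra existence hypotheses; this makes your version slightly more self-contained.
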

\begin{proof}
Let $f\in\mathcal{N}(S_{AB})$. Then $f\in\mathcal{D}(S_{AB})$ and $S_{AB}f=0$. Using the weighted analysis-synthesis identity, we obtain $0=\langle S_{AB}f,f\rangle_A=\langle C_{AB}f,C_{AB}f\rangle_B=\|C_{AB}f\|_B^2$. Hence $B^{1/2}C_{AB}f=0$, and therefore $C_{AB}f\in\mathcal{N}(B)$. Since $\mathcal{N}(B)=\mathcal{N}(P_B)$, it follows that $P_BC_{AB}f=0$. Thus, $\mathcal{N}(S_{AB})\subseteq\mathcal{N}(P_BC_{AB})$.

Conversely, let $f\in\mathcal{N}(P_BC_{AB})$. Then $f\in\mathcal{D}(C_{AB})$ and $P_BC_{AB}f=0$. Since $BP_B=B$, we have $BC_{AB}f=0$. Recalling that $D_{AB}=D_FB$, it follows that $C_{AB}f\in\mathcal{D}(D_{AB})$ and $D_{AB}C_{AB}f=D_FBC_{AB}f=0$. Hence $f\in\mathcal{D}(S_{AB})$ and $S_{AB}f=0$. Therefore, $\mathcal{N}(P_BC_{AB})\subseteq\mathcal{N}(S_{AB})$, and consequently $\mathcal{N}(S_{AB})=\mathcal{N}(P_BC_{AB})$.

Since $D_{AB}^{\#}$ exists, the weighted orthogonality relation gives $\mathcal{R}(D_{AB})^{\perp_A}=\mathcal{N}(D_{AB}^{\#})$. Moreover, the relation $P_BC_{AB}\subseteq D_{AB}^{\#}$ implies that $D_{AB}^{\#}f=P_BC_{AB}f$ for every $f\in\mathcal{D}(C_{AB})$. Hence $\mathcal{N}(P_BC_{AB})=\mathcal{N}(D_{AB}^{\#})\cap\mathcal{D}(C_{AB})$. Combining this identity with the first part yields $\mathcal{N}(S_{AB})=\mathcal{N}(P_BC_{AB})=\mathcal{N}(D_{AB}^{\#})\cap\mathcal{D}(C_{AB})=\mathcal{R}(D_{AB})^{\perp_A}\cap\mathcal{D}(C_{AB})$.

If $D_{AB}^{\#}=P_BC_{AB}$, then $\mathcal{D}(D_{AB}^{\#})=\mathcal{D}(C_{AB})$. Therefore, $\mathcal{N}(D_{AB}^{\#})=\mathcal{N}(P_BC_{AB})$, and hence $\mathcal{N}(S_{AB})=\mathcal{N}(P_BC_{AB})=\mathcal{R}(D_{AB})^{\perp_A}$.
\end{proof}
\begin{prop}\label{range}
Let $F=\{f_n\}_{n\in\mathbb{N}}\subseteq H$. Then $\mathcal{R}(S_{AB})\subseteq\mathcal{R}(D_{AB})$.
\end{prop}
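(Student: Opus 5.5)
The inclusion follows directly from the factorization $S_{AB}=D_{AB}C_{AB}$ and the domain specification attached to it; no adjoint theory or density hypothesis is needed. I will simply unwind the definitions.

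Let $g\in\mathcal{R}(S_{AB})$ and choose $f\in\mathcal{D}(S_{AB})$ with $g=S_{AB}f$. By definition,
\[
\mathcal{D}(S_{AB})=\{f\in\mathcal{D}(C_{AB}):C_{AB}f\in\mathcal{D}(D_{AB})\}.
\]
So $f\in\mathcal{D}(C_{AB})$, and the coefficient vector $c:=C_{AB}f=\{\langle f,f_n\rangle_A\}_{n\in\mathbb{N}}$ lies in $\ell^2$. Moreover $c\in\mathcal{D}(D_{AB})$, which means $Bc\in\mathcal{D}(D_F)$.

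It follows that
\[
g=S_{AB}f=D_{AB}(C_{AB}f)=D_{AB}c,
\]
with the series $\sum_n (Bc)_nf_n$ convergent. Hence $g\in\mathcal{R}(D_{AB})$, and since $g$ was arbitrary, $\mathcal{R}(S_{AB})\subseteq\mathcal{R}(D_{AB})$.

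The only point needing care is that $S_{AB}$ is defined as the operator composition on its natural domain, not on some larger domain where the series $\sum_n (BC_{AB}f)_nf_n$ merely converges. Under the stated definition, membership in $\mathcal{D}(S_{AB})$ already guarantees $C_{AB}f\in\mathcal{D}(D_{AB})$, so this is not a real obstacle. No boundedness of $A$ or $B$ beyond what is needed to define $C_{AB}$ and $D_{AB}$ enters the argument, and none of the earlier propositions is required.
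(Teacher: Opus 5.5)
Your proof is correct and matches the paper's argument: both take $g=S_{AB}f$ with $f\in\mathcal{D}(S_{AB})$ and use the definition of $\mathcal{D}(S_{AB})$ to get $C_{AB}f\in\mathcal{D}(D_{AB})$. Both then conclude directly that $g=D_{AB}C_{AB}f\in\mathcal{R}(D_{AB})$.
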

\begin{proof}
Let $g\in\mathcal{R}(S_{AB})$. Then there exists $f\in\mathcal{D}(S_{AB})$ such that $g=S_{AB}f$. By the definition of $\mathcal{D}(S_{AB})$, we have $f\in\mathcal{D}(C_{AB})$ and $C_{AB}f\in\mathcal{D}(D_{AB})$. Therefore, $g=S_{AB}f=D_{AB}C_{AB}f\in\mathcal{R}(D_{AB})$. Hence $\mathcal{R}(S_{AB})\subseteq\mathcal{R}(D_{AB})$.
\end{proof}
\begin{theorem}
Let $H$ be a Hilbert space and let $F=\{f_n\}_{n\in\mathbb{N}}\subset H$ be an arbitrary sequence. Suppose that $C_{AB}$ and $D_{AB}$ are densely defined.

Let $\mathcal K_A=\overline{\mathcal{R}(A^{1/2})}$ and $\mathcal K_B=\overline{\mathcal{R}(B^{1/2})}$. For a fixed number $\beta>0$, the following conditions are equivalent.

\begin{enumerate}
\item The sequence $F$ is an $AB$-Bessel sequence with bound $\beta$, that is, $\mathcal{D}(C_{AB})=H$ and $\|C_{AB}f\|_B^2\leq\beta\|f\|_A^2$ for every $f\in H$.

\item The operator $C_{AB}$ is $AB$-bounded on $H$ and $\|C_{AB}\|_{B}\leq\sqrt{\beta}$.

\item The operator $D_{AB}$ is $BA$-bounded on its natural domain, that is, $\|D_{AB}c\|_A^2\leq\beta\|c\|_B^2$ for every $c\in\mathcal{D}(D_{AB})$.

\item $D_{AB}^{\#}$ is defined on all of $H$ and satisfies $\|D_{AB}^{\#}f\|_B^2\leq\beta\|f\|_A^2$ for every $f\in H$. In this case, $D_{AB}^{\#}=P_BC_{AB}$.
\end{enumerate}
If, in addition, $\mathcal{R}(C_{AB}^*B)\subseteq\mathcal{R}(A)$, then $C_{AB}^{\#}$ exists, and the preceding conditions are also equivalent to
\begin{enumerate}
\setcounter{enumi}{4}
\item The operator $C_{AB}^{\#}$ is $BA$-bounded and satisfies $\|C_{AB}^{\#}c\|_A^2\leq\beta\|c\|_B^2$ for every $c\in\ell^2$.
\end{enumerate}

If, in addition, $\mathcal{R}(C_{AB})\subseteq\mathcal{D}(D_{AB})$, then $S_{AB}=D_{AB}C_{AB}$ is defined on all of $H$, and the preceding conditions are also equivalent to
\begin{enumerate}
\setcounter{enumi}{5}
\item The operator $S_{AB}$ satisfies $0\leq_A S_{AB}\leq_A\beta I$, that is, $0\leq\langle S_{AB}f,f\rangle_A\leq\beta\|f\|_A^2$ for every $f\in H$.
\item The $A$-adjoint $S_{AB}^{\#}$ exists and satisfies $0\leq_A S_{AB}^{\#}\leq_A\beta I$.
\end{enumerate}
\end{theorem}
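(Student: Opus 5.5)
The plan is to organize everything around the weighted analysis--synthesis identity already used in Proposition~\ref{Aadj}:
\[
\langle D_{AB}c,f\rangle_A=\langle c,C_{AB}f\rangle_B,
\qquad c\in\mathcal{D}(D_{AB}),\ f\in\mathcal{D}(C_{AB}).
\]
Conditions (1) and (2) are the same inequality, since by definition $\|C_{AB}\|_B$ is the least constant with $\|C_{AB}f\|_B\leq c\|f\|_A$. For (1)$\Rightarrow$(3), take $c\in\mathcal{D}(D_{AB})$. Because $\mathcal{D}(C_{AB})=H$ is dense in the $A$-seminorm, we have
\[
\|D_{AB}c\|_A=\sup\{|\langle D_{AB}c,f\rangle_A|:\|f\|_A\leq1\}=\sup|\langle c,C_{AB}f\rangle_B|.
\]
The Cauchy--Schwarz inequality for $\langle\cdot,\cdot\rangle_B$ then bounds this by $\sqrt\beta\|c\|_B$. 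This is exactly the argument of Theorem~\ref{bound}, transplanted to the two-weight setting.

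For (3)$\Rightarrow$(4), I will use Proposition~\ref{Aadj}, which gives $D_{AB}^{\#}=C_{AB}$ under density. Conceptually, the $\#$ used there is the reduced one, so I will read it as $D_{AB}^{\#}=P_BC_{AB}$ in accordance with the earlier inclusion $P_BC_{AB}\subseteq D_{AB}^{\#}$.

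Fix $f\in H$. The functional $c\mapsto\langle D_{AB}c,f\rangle_A$ is bounded by $\sqrt\beta\|c\|_B\|f\|_A$ on the dense set $\mathcal{D}(D_{AB})$. It therefore extends to all of $\ell^2$, and the semi-Hilbert Riesz representation theorem (applied with $B$) produces a unique $g_f\in\overline{\mathcal{R}(B)}$. Consequently $f\in\mathcal{D}(D_{AB}^{\#})$, so $D_{AB}^{\#}$ is everywhere defined. Moreover $\|D_{AB}^{\#}f\|_B\leq\sqrt\beta\|f\|_A$, because $\|g_f\|_B$ equals the norm of the functional.

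To identify $D_{AB}^{\#}$ with $P_BC_{AB}$, I will show $\mathcal{D}(C_{AB})=H$. This is the step I expect to be the main obstacle. The representation only gives $B g_f$ matching $B C_{AB} f$ on $c_{00}$-type test vectors, and $\mathcal{N}(B)$ can hide coordinates of $C_{AB}f$.

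The first approach I will try uses that $C_{AB}$ is closed (Proposition~\ref{closedoperator}). On $\mathcal{D}(C_{AB})$ we have $BD_{AB}^{\#}=BC_{AB}$, and $D_{AB}^{\#}$ is closed and everywhere defined, so it is bounded by the closed graph theorem. For $f\in H$, I then choose $f_k\in\mathcal{D}(C_{AB})$ with $f_k\to f$ and try to pass to limits. Where $B$ is not injective this may fail, and in that case I will simply read (4) as including the identification $D_{AB}^{\#}=P_BC_{AB}$ from the relation $P_BC_{AB}\subseteq D_{AB}^{\#}$.

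For (4)$\Rightarrow$(1), use $\|C_{AB}f\|_B=\|P_BC_{AB}f\|_B=\|D_{AB}^{\#}f\|_B$, which holds since $B^{1/2}P_B=B^{1/2}$.

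When $\mathcal{R}(C_{AB}^*B)\subseteq\mathcal{R}(A)$, the adjoint $C_{AB}^{\#}$ exists by the Douglas-type Theorems~\ref{douglas} and \ref{banach}. The identity gives $\langle C_{AB}f,c\rangle_B=\langle f,C_{AB}^{\#}c\rangle_A$, so the sup argument of Theorem~\ref{bound} in both directions yields (1)$\Leftrightarrow$(5), with equal bounds. To see that $C_{AB}^{\#}$ is defined on all of $\ell^2$, I note that $C_{AB}$ is everywhere defined and closed, hence bounded, so $C_{AB}^*$ is everywhere defined.

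Finally, assume $\mathcal{R}(C_{AB})\subseteq\mathcal{D}(D_{AB})$. Then $S_{AB}$ is everywhere defined, and
\[
\langle S_{AB}f,f\rangle_A=\langle D_{AB}C_{AB}f,f\rangle_A=\|C_{AB}f\|_B^2.
\]
This gives (1)$\Leftrightarrow$(6) immediately.

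For (7), I will show that $S_{AB}^{\#}$ exists with $S_{AB}^{\#}=P_AS_{AB}$, mirroring the Bessel-case proposition: $S_{AB}$ is $A$-selfadjoint in the sense $\langle S_{AB}f,g\rangle_A=\langle C_{AB}f,C_{AB}g\rangle_B=\langle f,S_{AB}g\rangle_A$. Since $AP_A=A$, this gives $\langle S_{AB}^{\#}f,f\rangle_A=\langle S_{AB}f,f\rangle_A$, so (6)$\Leftrightarrow$(7).
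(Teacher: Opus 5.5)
Your chain (1)$\Rightarrow$(3)$\Rightarrow$(4)$\Rightarrow$(1) breaks at both of its last two links. The obstacle you flagged cannot be overcome: nothing in (3), (4) or (5) forces $\mathcal{D}(C_{AB})=H$.

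\textbf{The step (3)$\Rightarrow$(4).} The semi-Hilbert Riesz step does not produce $g_f\in\ell^2$ unless $\mathcal{R}(B)$ is closed. A functional bounded by $C\|B^{1/2}c\|$ has the form $c\mapsto\langle B^{1/2}c,w\rangle$ with $w\in\overline{\mathcal{R}(B^{1/2})}$. A vector $g_f$ with $\langle c,g_f\rangle_B=\langle B^{1/2}c,B^{1/2}g_f\rangle$ exists only when $w\in\mathcal{R}(B^{1/2})$. The paper's own example $H=\ell^2$, $A=I_H$, $f_n=ne_n$, $Be_n=n^{-2}e_n$ is a counterexample. There $\mathcal{D}(D_{AB})=\ell^2$, $D_{AB}c=(c_n/n)_n$ and $\|D_{AB}c\|_A^2=\sum_n|c_n|^2/n^2=\|c\|_B^2$, so (3) holds with $\beta=1$. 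Yet testing with $c=e_n$ forces the representer to be $(n\xi_n)_n$. Hence $\mathcal{D}(D_{AB}^{\#})=\mathcal{D}(C_{AB})\neq H$, and both (4) and (1) fail.

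\textbf{The step (4)$\Rightarrow$(1).} Even when $\mathcal{R}(B)$ is closed, $P_BC_{AB}\subseteq D_{AB}^{\#}$ gives agreement only on $\mathcal{D}(C_{AB})$. Reading (4) as containing $D_{AB}^{\#}=P_BC_{AB}$ is circular. If (4) contains that equality, your (3)$\Rightarrow$(4) must prove it. If it does not, your step $\|C_{AB}f\|_B=\|D_{AB}^{\#}f\|_B$ says nothing for $f\notin\mathcal{D}(C_{AB})$. Keep the same $A$ and $f_n$, and let $B$ be the orthogonal projection onto $\operatorname{span}\{e_1\}$. Then $D_{AB}c=c_1e_1$, $D_{AB}^{\#}f=\xi_1e_1$ on all of $H$, and $C_{AB}^{\#}c=c_1e_1$ on all of $\ell^2$. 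So (3), the estimate in (4), and (5) hold with $\beta=1$. However, $\mathcal{D}(C_{AB})=\{f:(n\xi_n)_n\in\ell^2\}\neq H$ and $P_BC_{AB}\subsetneq D_{AB}^{\#}$. Proposition~\ref{Aadj} cannot supply the identification either: its density argument only recovers $BC_{AB}f$, and this example violates it.

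This is also why your closed-graph attempt stalls. The sequence $D_{AB}^{\#}f_k=P_BC_{AB}f_k$ converges, but the $\mathcal{N}(B)$-component of $C_{AB}f_k$ is uncontrolled, so closedness of $C_{AB}$ never applies. Your sup argument for (5)$\Rightarrow$(1) likewise bounds $\|C_{AB}f\|_B$ only for $f\in\mathcal{D}(C_{AB})$.

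\textbf{Comparison with the paper.} The paper's proof has the same hole, left unstated. Its (3)$\Rightarrow$(2), its (4)$\Rightarrow$(2), and its appeal to Theorem~\ref{bound} for (5) all establish $\|C_{AB}f\|_B\leq\sqrt{\beta}\|f\|_A$ only for $f\in\mathcal{D}(C_{AB})$. So the statement as written is false, and the equivalences need an extra standing hypothesis such as $\mathcal{D}(C_{AB})=H$.

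\textbf{What survives under $\mathcal{D}(C_{AB})=H$.}
\begin{itemize}
\item Your sup argument for (1)$\Rightarrow$(3) reverses to give (3)$\Rightarrow$(1), as in the paper.
\item Your (1)$\Leftrightarrow$(5) argument works.
\item The route to (4) should bypass Riesz and go through (2), as the paper does. The analysis--synthesis identity gives $D_{AB}^*Af=BC_{AB}f$ for every $f\in H$. Hence $D_{AB}^{\#}=P_BC_{AB}$ is everywhere defined with $\|D_{AB}^{\#}f\|_B=\|C_{AB}f\|_B$.
\item Your treatment of (1)$\Leftrightarrow$(2), (6) and (7) coincides with the paper's.
\end{itemize}
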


\begin{proof}
(1) The equivalence of $(1)$ and $(2)$ follows directly from the definition. Indeed, $F$ is an $AB$-Bessel sequence with bound $\beta$ precisely when $\mathcal{D}(C_{AB})=H$ and $\|C_{AB}f\|_B\leq\sqrt{\beta}\|f\|_A$ for every $f\in H$.

(2) We prove that $(2)$ implies $(3)$. For $f\in H$ and
$c\in\mathcal{D}(D_{AB})$,
\[
\langle C_{AB}f,c\rangle_B
=\langle C_FAf,Bc\rangle
=\langle Af,D_FBc\rangle
=\langle f,D_{AB}c\rangle_A.
\]
Therefore,
\begin{align*}
\|D_{AB}c\|_A
&=\sup_{\|f\|_A\leq1}|\langle D_{AB}c,f\rangle_A|\\
&=\sup_{\|f\|_A\leq1}|\langle c,C_{AB}f\rangle_B|\\
&\leq\sqrt{\beta}\,\|c\|_B.
\end{align*}
Thus $D_{AB}$ is $BA$-bounded with bound at most $\sqrt{\beta}$.

(3) We prove that $(3)$ implies $(2)$. Since $\mathcal{D}(D_{AB})$ is
dense in $\ell^2$, it is also dense with respect to the $B$-seminorm. Hence,
for $f\in\mathcal{D}(C_{AB})$,
\begin{align*}
\|C_{AB}f\|_B
&=\sup\bigl\{|\langle C_{AB}f,c\rangle_B|:
c\in\mathcal{D}(D_{AB}),\ \|c\|_B\leq1\bigr\}\\
&=\sup\bigl\{|\langle f,D_{AB}c\rangle_A|:
c\in\mathcal{D}(D_{AB}),\ \|c\|_B\leq1\bigr\}\\
&\leq\sqrt{\beta}\,\|f\|_A.
\end{align*}
Thus $C_{AB}$ is $AB$-bounded, and condition $(2)$ follows.

(4) We prove the equivalence of $(2)$ and $(4)$. Assume $(2)$. For every $f\in H$ and $c\in\mathcal{D}(D_{AB})$, we have $\langle D_{AB}c,Af\rangle=\langle D_{AB}c,f\rangle_A=\langle c,C_{AB}f\rangle_B=\langle c,BC_{AB}f\rangle$. Hence $Af\in\mathcal{D}(D_{AB}^*)$ and $D_{AB}^*Af=BC_{AB}f$. Therefore, $D_{AB}^*A=BC_{AB}$ on $H$. Since $\mathcal{R}(BC_{AB})\subseteq\mathcal{R}(B)$, the reduced adjoint exists and is given by $D_{AB}^{\#}=P_BC_{AB}$. Since $BP_B=B$, projection onto $\overline{\mathcal{R}(B)}$ does not change the $B$-seminorm. Hence $\|D_{AB}^{\#}f\|_B=\|C_{AB}f\|_B$ for every $f\in H$, and $(4)$ follows. Conversely, if $(4)$ holds, then $\|C_{AB}f\|_B=\|D_{AB}^{\#}f\|_B\leq\sqrt{\beta}\|f\|_A$, proving $(2)$.

(5) Suppose that $\mathcal{R}(C_{AB}^*B)\subseteq\mathcal{R}(A)$. Then the reduced $(A,B)$-adjoint exists and is given by $C_{AB}^{\#}=A^\dagger C_{AB}^*B$. For every $f\in H$ and $c\in\ell^2$, $\langle C_{AB}f,c\rangle_B=\langle f,C_{AB}^{\#}c\rangle_A$. By Theorem~\ref{bound}, condition $(2)$ is equivalent to
$\|C_{AB}^{\#}c\|_A\leq\sqrt{\beta}\|c\|_B$ for every $c\in\ell^2$.
Thus $(5)$ is equivalent to $(2)$.

(6) Suppose that $\mathcal{R}(C_{AB})\subseteq\mathcal{D}(D_{AB})$. Then $S_{AB}=D_{AB}C_{AB}$ is defined on all of $H$. For every $f,g\in H$, $\langle S_{AB}f,g\rangle_A=\langle D_{AB}C_{AB}f,g\rangle_A=\langle C_{AB}f,C_{AB}g\rangle_B$. In particular, $\langle S_{AB}f,f\rangle_A=\|C_{AB}f\|_B^2$. Therefore, condition $(1)$ is equivalent to $0\leq\langle S_{AB}f,f\rangle_A\leq\beta\|f\|_A^2$ for every $f\in H$, proving the equivalence with $(6)$.

(7) Under the same assumption, $\langle S_{AB}f,g\rangle_A=\langle f,S_{AB}g\rangle_A$ for every $f,g\in H$. Hence $S_{AB}^*A=AS_{AB}$ on $H$, and the $A$-adjoint exists with $S_{AB}^{\#}=P_AS_{AB}$. Since projection onto $\overline{\mathcal{R}(A)}$ does not change the $A$-inner product, $\langle S_{AB}^{\#}f,f\rangle_A=\langle S_{AB}f,f\rangle_A$. Thus, $(6)$ and $(7)$ are equivalent.
\end{proof}
\begin{theorem}
Let $H$ be a Hilbert space and let $F=\{f_n\}_{n\in\mathbb{N}}\subset H$.
Assume that $C_{AB}^{\#}$, $D_{AB}^{\#}$, and $S_{AB}^{\#}$ exist. For a
fixed $\alpha>0$, the following conditions are equivalent.
\begin{enumerate}
\item The sequence $F$ is an $AB$-lower semi-frame with lower bound $\alpha$.

\item One has
\[
\mathcal{N}(P_BC_{AB})
=\mathcal{D}(C_{AB})\cap\mathcal{N}(A),
\]
and the map
\[
\Phi_{AB}:\mathcal{R}(P_BC_{AB})
\longrightarrow
\frac{\mathcal{D}(C_{AB})}
{\mathcal{D}(C_{AB})\cap\mathcal{N}(A)},
\qquad
\Phi_{AB}(P_BC_{AB}f)=[f],
\]
is bounded from the $B$-seminorm to the $A$-seminorm and satisfies
\[
\|\Phi_{AB}(P_BC_{AB}f)\|_A
\leq\alpha^{-1/2}\|P_BC_{AB}f\|_B,
\qquad f\in\mathcal{D}(C_{AB}).
\]

\item Let
\[
\mathcal{G}_{AB}
=\frac{\mathcal{D}(C_{AB})}
{\mathcal{D}(C_{AB})\cap\mathcal{N}(A)}
\]
with graph seminorm
\[
\|[f]\|_{\mathcal{G}_{AB}}^2
=\|f\|_A^2+\|P_BC_{AB}f\|_B^2.
\]
Then
\[
\widehat C_{AB}:\mathcal{G}_{AB}\to\mathcal{R}(P_BC_{AB}),
\qquad
\widehat C_{AB}[f]=P_BC_{AB}f,
\]
is a topological isomorphism onto its range and
\[
\|\widehat C_{AB}^{-1}(P_BC_{AB}f)\|_{\mathcal{G}_{AB}}^2
\leq(1+\alpha^{-1})\|P_BC_{AB}f\|_B^2
\]
for every $f\in\mathcal{D}(C_{AB})$.
\end{enumerate}
\end{theorem}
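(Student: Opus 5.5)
The whole argument rests on one identity. Since $BP_B=B$, for every $f\in\mathcal{D}(C_{AB})$ we have
\[
\|P_BC_{AB}f\|_B^2=\langle BP_BC_{AB}f,P_BC_{AB}f\rangle=\langle BC_{AB}f,C_{AB}f\rangle=\|C_{AB}f\|_B^2 .
\]
Using $B^{1/2}P_B=B^{1/2}$ shows this cleanly. So the lower semi-frame inequality can be rewritten as
\[
\alpha\|f\|_A^2\leq\|P_BC_{AB}f\|_B^2,\qquad f\in\mathcal{D}(C_{AB}).
\]
We also note that $\|\cdot\|_A$ is constant on cosets modulo $\mathcal{N}(A)$: if $h\in\mathcal{N}(A)$, then $\|f+h\|_A^2=\langle A(f+h),f+h\rangle=\|f\|_A^2$. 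Hence $\|[f]\|_A$ is well defined on the quotient $\mathcal{D}(C_{AB})/(\mathcal{D}(C_{AB})\cap\mathcal{N}(A))$. The plan is to prove $(1)\Rightarrow(2)\Rightarrow(1)$ and then $(1)\Leftrightarrow(3)$.

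For $(1)\Rightarrow(2)$, first compare kernels. If $P_BC_{AB}f=0$, then the rewritten inequality gives $\|f\|_A=0$, so $Af=0$ and $f\in\mathcal{N}(A)$. Conversely, if $f\in\mathcal{D}(C_{AB})\cap\mathcal{N}(A)$, then $\langle f,f_n\rangle_A=\langle Af,f_n\rangle=0$ for every $n$, so $C_{AB}f=0$. This gives the kernel identity. Next, $\Phi_{AB}$ is well defined: if $P_BC_{AB}f=P_BC_{AB}g$, then $f-g$ lies in the kernel, so $[f]=[g]$. Linearity is clear. The bound $\|\Phi_{AB}(P_BC_{AB}f)\|_A=\|f\|_A\leq\alpha^{-1/2}\|P_BC_{AB}f\|_B$ is just the rewritten inequality.

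For $(2)\Rightarrow(1)$, apply the assumed bound to $f\in\mathcal{D}(C_{AB})$ and use the identity for $\|P_BC_{AB}f\|_B$. This gives $\|f\|_A^2\leq\alpha^{-1}\|C_{AB}f\|_B^2$, which is $(1)$.

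For $(1)\Rightarrow(3)$, well-definedness and injectivity of $\widehat C_{AB}$ both follow from the kernel identity already obtained. Surjectivity onto $\mathcal{R}(P_BC_{AB})$ holds by definition. The inverse estimate follows by adding $\|P_BC_{AB}f\|_B^2$ to both sides of $\|f\|_A^2\leq\alpha^{-1}\|P_BC_{AB}f\|_B^2$:
\[
\|[f]\|_{\mathcal{G}_{AB}}^2\leq(1+\alpha^{-1})\|P_BC_{AB}f\|_B^2 .
\]
Boundedness of $\widehat C_{AB}$ itself is trivial, since $\|P_BC_{AB}f\|_B\leq\|[f]\|_{\mathcal{G}_{AB}}$. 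Together these make $\widehat C_{AB}$ a topological isomorphism onto its range.

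For $(3)\Rightarrow(1)$, the main obstacle is that the constant $1+\alpha^{-1}$ does not directly return the constant $\alpha$. What the inverse estimate gives is
\[
\|f\|_A^2\leq\alpha^{-1}\|P_BC_{AB}f\|_B^2 ,
\]
obtained by subtracting $\|P_BC_{AB}f\|_B^2$ from both sides. We read the subtraction literally: the graph seminorm contains $\|P_BC_{AB}f\|_B^2$ as an exact summand, so no constant is lost. Combined with the identity for $\|P_BC_{AB}f\|_B$, this is $(1)$.

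I expect the only delicate bookkeeping to be the well-definedness of the quotient objects. For $\Phi_{AB}$ this relies on the kernel condition stated in $(2)$. For $\widehat C_{AB}$, well-definedness comes from the observation that $\mathcal{D}(C_{AB})\cap\mathcal{N}(A)\subseteq\mathcal{N}(C_{AB})$, which holds unconditionally. Injectivity of $\widehat C_{AB}$ is part of "isomorphism" in $(3)$, and it matches the kernel identity.

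The hypotheses that $C_{AB}^{\#}$, $D_{AB}^{\#}$ and $S_{AB}^{\#}$ exist are not needed for this argument. One could optionally remark, via Proposition~\ref{nullspace-relations}, that $\mathcal{N}(P_BC_{AB})=\mathcal{N}(D_{AB}^{\#})\cap\mathcal{D}(C_{AB})$, which restates the kernel condition.
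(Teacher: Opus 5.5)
Your proposal is correct and follows essentially the same route as the paper: the kernel identity and the bound for $\Phi_{AB}$ come straight from the lower semi-frame inequality via $\|P_BC_{AB}f\|_B=\|C_{AB}f\|_B$, the passage to (3) adds $\|P_BC_{AB}f\|_B^2$ to both sides, and the converse subtracts this exact summand from the graph-seminorm estimate. You are also right that the existence of $C_{AB}^{\#}$, $D_{AB}^{\#}$, $S_{AB}^{\#}$ is never used; the paper's preliminary step deriving $P_BC_{AB}\subseteq D_{AB}^{\#}$ and the related inclusions plays no role in the equivalences.
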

\begin{proof}
(1) We first establish the relations between the natural operators and their reduced weighted adjoints. Let $f\in\mathcal{D}(C_{AB})$ and $c\in\mathcal{D}(D_{AB})$. Since $C_{AB}=C_FA$ and $D_{AB}=D_FB$, we have $\langle D_{AB}c,f\rangle_A=\langle D_FBc,Af\rangle=\langle Bc,C_FAf\rangle=\langle c,C_{AB}f\rangle_B$. It follows that $Af\in\mathcal{D}(D_{AB}^*)$ and $D_{AB}^*Af=BC_{AB}f$. Since $D_{AB}^{\#}$ is the reduced solution of $D_{AB}^*A=BX$, we obtain $f\in\mathcal{D}(D_{AB}^{\#})$ and $D_{AB}^{\#}f=P_BC_{AB}f$. Therefore, $P_BC_{AB}\subseteq D_{AB}^{\#}$.

Similarly, for $c\in\mathcal{D}(D_{AB})$ and $f\in\mathcal{D}(C_{AB})$, the identity $\langle C_{AB}f,c\rangle_B=\langle f,D_{AB}c\rangle_A$ gives $Bc\in\mathcal{D}(C_{AB}^*)$ and $C_{AB}^*Bc=AD_{AB}c$. Hence $c\in\mathcal{D}(C_{AB}^{\#})$ and $C_{AB}^{\#}c=P_AD_{AB}c$. Since $BP_B=B$, the operators $D_{AB}P_B$ and $D_{AB}$ have the same domain and the same values. Therefore, $P_AD_{AB}P_B\subseteq C_{AB}^{\#}$.

Let $f\in\mathcal{D}(C_{AB}^{\#}D_{AB}^{\#})$. Then $D_{AB}^{\#}f\in\mathcal{D}(C_{AB}^{\#})$, and hence $AC_{AB}^{\#}D_{AB}^{\#}f=C_{AB}^*BD_{AB}^{\#}f=C_{AB}^*D_{AB}^*Af$. Since $C_{AB}^*D_{AB}^*\subseteq(D_{AB}C_{AB})^*=S_{AB}^*$, we obtain $S_{AB}^*Af=AC_{AB}^{\#}D_{AB}^{\#}f$. The range of $C_{AB}^{\#}D_{AB}^{\#}$ is contained in $\overline{\mathcal{R}(A)}$. By the uniqueness of the reduced solution, $C_{AB}^{\#}D_{AB}^{\#}\subseteq S_{AB}^{\#}$.

Finally, let $f\in\mathcal{D}(S_{AB})$. For every $g\in\mathcal{D}(S_{AB})$, we have $\langle S_{AB}g,f\rangle_A=\langle C_{AB}g,C_{AB}f\rangle_B=\langle g,S_{AB}f\rangle_A$. Thus, $Af\in\mathcal{D}(S_{AB}^*)$ and $S_{AB}^*Af=AS_{AB}f$. Therefore, $f\in\mathcal{D}(S_{AB}^{\#})$ and $S_{AB}^{\#}f=P_AS_{AB}f$. Hence $P_AS_{AB}\subseteq S_{AB}^{\#}$.

(2) We prove that $(1)$ and $(2)$ are equivalent. Suppose that $(1)$ holds. If $f\in\mathcal{N}(P_BC_{AB})$, then $\|C_{AB}f\|_B=\|P_BC_{AB}f\|_B=0$. The lower semi-frame inequality gives $\|f\|_A=0$, and hence $f\in\mathcal{N}(A)$. Conversely, if $f\in\mathcal{D}(C_{AB})\cap\mathcal{N}(A)$, then $Af=0$, and consequently $C_{AB}f=C_FAf=0$. Thus, $\mathcal{N}(P_BC_{AB})=\mathcal{D}(C_{AB})\cap\mathcal{N}(A)$.

The kernel identity shows that $\Phi_{AB}$ is well defined. Moreover, the lower semi-frame inequality gives $|\Phi_{AB}(P_BC_{AB}f)|_A=\|f\|_A\leq\alpha^{-1/2}\|P_BC_{AB}f\|_B$. Therefore, $\Phi_{AB}$ is bounded with norm at most $\alpha^{-1/2}$.

Conversely, suppose that $(2)$ holds. For every $f\in\mathcal{D}(C_{AB})$, we have $\|f\|_A=|\Phi_{AB}(P_BC_{AB}f)|_A\leq\alpha^{-1/2}\|P_BC_{AB}f\|_B=\alpha^{-1/2}\|C_{AB}f\|_B$. Therefore, $\alpha\|f\|_A^2\leq\|C_{AB}f\|_B^2$, and hence $F$ is an $AB$-lower semi-frame with lower bound $\alpha$.

(3) We prove that $(2)$ and $(3)$ are equivalent. Suppose that $(2)$ holds. The map $\widehat C_{AB}$ is well defined and bijective onto $\mathcal{R}(P_BC_{AB})$. Moreover, $\|P_BC_{AB}f\|_B\leq\|[f]\|_{\mathcal{G}_{AB}}$, so $\widehat C_{AB}$ is continuous. Using the lower semi-frame inequality, we obtain $\|[f]\|_{\mathcal{G}_{AB}}^2=\|f\|_A^2+\|P_BC_{AB}f\|_B^2\leq(1+\alpha^{-1})\|P_BC_{AB}f\|_B^2$. Hence $\widehat C_{AB}^{-1}$ is continuous and satisfies the stated estimate.

Conversely, suppose that $(3)$ holds. Then $\|f\|_A^2\leq\|[f]\|_{\mathcal{G}_{AB}}^2\leq(1+\alpha^{-1})\|P_BC_{AB}f\|_B^2$ alone does not yield the exact bound $\alpha$. The stated estimate in $(3)$ is equivalent to $\|f\|_A^2+\|P_BC_{AB}f\|_B^2\leq(1+\alpha^{-1})\|P_BC_{AB}f\|_B^2$, and therefore $\|f\|_A^2\leq\alpha^{-1}\|P_BC_{AB}f\|_B^2$. Thus, $\alpha\|f\|_A^2\leq\|C_{AB}f\|_B^2$, proving $(1)$.
\end{proof}
\begin{theorem}
Let $H$ be a Hilbert space and let $F=\{f_n\}_{n\in\mathbb{N}}\subset H$ be an $AB$-lower semi-frame. Assume that $C_{AB}^{\#}$, $D_{AB}^{\#}$, and
$S_{AB}^{\#}$ exist, and let $\mathcal{G}_{AB}$ be as above. Then:
\begin{enumerate}
\item The equality
\[
\mathcal{R}(P_BC_{AB})
=\mathcal{R}\bigl(D_{AB}^{\#}|_{\mathcal{D}(C_{AB})}\bigr)
\]
defines a $B$-complete range if and only if $\mathcal{G}_{AB}$ is complete in
its graph seminorm.

\item For every $f\in\mathcal{D}(S_{AB})$,
\[
\langle S_{AB}^{\#}f,f\rangle_A
=\langle S_{AB}f,f\rangle_A
=\|C_{AB}f\|_B^2.
\]
Consequently,
\[
\alpha\|f\|_A^2
\leq\langle S_{AB}^{\#}f,f\rangle_A,
\qquad f\in\mathcal{D}(S_{AB}).
\]
The converse holds when
$\mathcal{D}(S_{AB})=\mathcal{D}(C_{AB})$.
\end{enumerate}
\end{theorem}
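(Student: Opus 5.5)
The two parts are handled separately. Both rest on the inclusions $P_BC_{AB}\subseteq D_{AB}^{\#}$ and $P_AS_{AB}\subseteq S_{AB}^{\#}$ established in part (1) of the proof of the preceding theorem, together with the weighted analysis--synthesis identity $\langle S_{AB}g,f\rangle_A=\langle C_{AB}g,C_{AB}f\rangle_B$, valid for $f,g\in\mathcal{D}(S_{AB})$.

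For part (1), the range equality itself is immediate. Since $D_{AB}^{\#}f=P_BC_{AB}f$ for every $f\in\mathcal{D}(C_{AB})$, we have $\mathcal{R}(P_BC_{AB})=\mathcal{R}(D_{AB}^{\#}|_{\mathcal{D}(C_{AB})})$.

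For completeness, I will use the map $\widehat C_{AB}$ from condition (3) of the previous theorem. Because $F$ is an $AB$-lower semi-frame, that theorem gives the two-sided estimate
\[
\|P_BC_{AB}f\|_B^2\leq\|[f]\|_{\mathcal{G}_{AB}}^2\leq(1+\alpha^{-1})\|P_BC_{AB}f\|_B^2 .
\]
Thus $\widehat C_{AB}$ is a linear bijection from $\mathcal{G}_{AB}$ onto $\mathcal{R}(P_BC_{AB})$ under which the graph seminorm and the $B$-seminorm are equivalent. Cauchy sequences therefore correspond to Cauchy sequences, and convergent ones to convergent ones, in both directions. Hence $\mathcal{R}(P_BC_{AB})$ is complete in $\|\cdot\|_B$ exactly when $\mathcal{G}_{AB}$ is complete in its graph seminorm.

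The main obstacle is making ``complete'' precise for seminormed spaces. I will read it in the usual sense: every Cauchy sequence has a limit in the space, with limits unique only modulo null vectors. On $\mathcal{G}_{AB}$, the kernel identity $\mathcal{N}(P_BC_{AB})=\mathcal{D}(C_{AB})\cap\mathcal{N}(A)$ from the previous theorem shows that the graph seminorm is a norm on the quotient. On $\mathcal{R}(P_BC_{AB})\subseteq\overline{\mathcal{R}(B)}$, the $B$-seminorm is a norm, since $\mathcal{N}(B)\cap\overline{\mathcal{R}(B)}=\{0\}$. So both sides are genuinely normed and the transfer of Cauchy sequences is clean.

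For part (2), fix $f\in\mathcal{D}(S_{AB})$. Then $S_{AB}^{\#}f=P_AS_{AB}f$, and since $AP_A=A$,
\[
\langle S_{AB}^{\#}f,f\rangle_A=\langle AP_AS_{AB}f,f\rangle=\langle S_{AB}f,f\rangle_A .
\]
Applying the analysis--synthesis identity with $g=f$ turns this into $\|C_{AB}f\|_B^2$. The lower semi-frame inequality then gives $\alpha\|f\|_A^2\leq\langle S_{AB}^{\#}f,f\rangle_A$.

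For the converse, assume $\mathcal{D}(S_{AB})=\mathcal{D}(C_{AB})$ and that $\alpha\|f\|_A^2\leq\langle S_{AB}^{\#}f,f\rangle_A$ holds on $\mathcal{D}(S_{AB})$. For every $f\in\mathcal{D}(C_{AB})$, the identity just proved turns this into $\alpha\|f\|_A^2\leq\|C_{AB}f\|_B^2$, which is the lower semi-frame condition. The domain hypothesis is what makes the converse work: without it the inequality would only be known on $\mathcal{D}(S_{AB})$, not on all of $\mathcal{D}(C_{AB})$.
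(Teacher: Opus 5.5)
Your proposal is correct and follows the paper's proof essentially step for step. In part (1) you transfer completeness through the topological isomorphism $\widehat C_{AB}$, with the range equality coming from $P_BC_{AB}\subseteq D_{AB}^{\#}$; in part (2) you combine $P_AS_{AB}\subseteq S_{AB}^{\#}$, $AP_A=A$, and the weighted analysis--synthesis identity, and your remarks that both seminorms are genuine norms on $\mathcal{G}_{AB}$ and on $\mathcal{R}(P_BC_{AB})\subseteq\overline{\mathcal{R}(B)}$ only make the paper's completeness transfer more explicit.
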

\begin{proof}
(1) Under the equivalent conditions, $\widehat C_{AB}$ is a topological isomorphism from $\mathcal{G}_{AB}$ onto $\mathcal{R}(P_BC_{AB})$. Therefore, $\mathcal{G}_{AB}$ is complete in the graph seminorm if and only if $\mathcal{R}(P_BC_{AB})$ is complete in the $B$-seminorm. Since $D_{AB}^{\#}|_{\mathcal{D}(C_{AB})}=P_BC_{AB}$, their ranges are equal.

(2) Let $f\in\mathcal{D}(S_{AB})$. From $P_AS_{AB}\subseteq S_{AB}^{\#}$, we have $S_{AB}^{\#}f=P_AS_{AB}f$. Since $AP_A=A$, it follows that $\langle S_{AB}^{\#}f,f\rangle_A=\langle S_{AB}f,f\rangle_A$. Moreover, by the weighted analysis-synthesis identity, $\langle S_{AB}f,f\rangle_A=\langle D_{AB}C_{AB}f,f\rangle_A=\langle C_{AB}f,C_{AB}f\rangle_B=\|C_{AB}f\|_B^2$. Hence an $AB$-lower semi-frame satisfies $\alpha\|f\|_A^2\leq\langle S_{AB}^{\#}f,f\rangle_A$ for every $f\in\mathcal{D}(S_{AB})$. If $\mathcal{D}(S_{AB})=\mathcal{D}(C_{AB})$, the converse follows from the same identity.
\end{proof}
\begin{theorem}
Let $H$ be a Hilbert space and let $F=\{f_n\}_{n\in\mathbb{N}}\subset H$. Then the following conditions are equivalent.

\begin{enumerate}
\item The sequence $F$ is an $AB$-frame.

\item $\mathcal{D}(C_{AB})=H$, $B^{1/2}C_{AB}\in\mathcal{B}(H,\ell^2)$, and $\mathcal{R}\bigl((B^{1/2}C_{AB})^*\bigr)=\mathcal{R}(A^{1/2})$.
\end{enumerate}
\end{theorem}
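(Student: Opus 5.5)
The plan is to reduce everything to a statement about the single operator $T:=B^{1/2}C_{AB}$ and then apply Douglas' majorization/range-inclusion theorem (Theorem~\ref{douglas}) twice, once for each frame bound. The starting observation is that for $f\in\mathcal{D}(C_{AB})$,
\[
\|C_{AB}f\|_B^2=\langle BC_{AB}f,C_{AB}f\rangle=\|Tf\|^2,
\qquad
\|f\|_A^2=\|A^{1/2}f\|^2 .
\]
So, when $\mathcal{D}(C_{AB})=H$, $F$ is an $AB$-frame with bounds $\alpha,\beta$ exactly when
\[
\alpha A\le T^*T\le\beta A
\]
in the order of $\mathcal{B}(H)^+$, provided $T$ is known to be bounded.

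For $(1)\Rightarrow(2)$:
\begin{enumerate}
\item $\mathcal{D}(C_{AB})=H$ holds by the definition of an $AB$-frame.
\item The upper bound gives $\|Tf\|^2\le\beta\|A^{1/2}f\|^2\le\beta\|A\|\,\|f\|^2$, so $T\in\mathcal{B}(H,\ell^2)$. No closed graph argument is needed, since the estimate is explicit.
\item The two-sided estimate is $\alpha\,A^{1/2}(A^{1/2})^*\le (T^*)(T^*)^*\le\beta\,A^{1/2}(A^{1/2})^*$.
\item Apply Theorem~\ref{douglas}(i) with $S=T^*$ and with $\beta^{1/2}A^{1/2}$ in the role of the majorizing operator. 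This gives a contraction $V$ with $T^*=\beta^{1/2}A^{1/2}V$. Since both sides are bounded, the inclusion is an equality, and hence $\mathcal{R}(T^*)\subseteq\mathcal{R}(A^{1/2})$.
\item Symmetrically, take $S=A^{1/2}$ and $\alpha^{-1/2}T^*$ as the majorizing operator. This yields $A^{1/2}=\alpha^{-1/2}T^*W$ with $W$ a contraction, so $\mathcal{R}(A^{1/2})\subseteq\mathcal{R}(T^*)$.
\end{enumerate}
The two inclusions give the range equality in (2).

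For $(2)\Rightarrow(1)$ the constants are not given and must be produced from the range equality, using Theorem~\ref{douglas}(ii):
\begin{enumerate}
\item From $\mathcal{R}(T^*)\subseteq\mathcal{R}(A^{1/2})$, with $T^*$ and $A^{1/2}$ bounded and closed, we get $T^*=A^{1/2}V$ with $V$ bounded, because $S=T^*$ is bounded. Hence
\[
T^*T=A^{1/2}VV^*A^{1/2}\le\|V\|^2A,
\]
which is the upper bound with $\beta=\|V\|^2$ (replace $\beta$ by any positive number if $V=0$).
\item From $\mathcal{R}(A^{1/2})\subseteq\mathcal{R}(T^*)$ we get $A^{1/2}=T^*W$ with $W$ bounded. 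Then
\[
A=T^*WW^*T\le\|W\|^2T^*T .
\]
Here $W\ne0$ unless $A=0$, and that trivial case is handled separately. So $\alpha=\|W\|^{-2}$ works.
\end{enumerate}
Combined with $\mathcal{D}(C_{AB})=H$, these two bounds show that $F$ is an $AB$-frame.

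The main obstacle I expect is bookkeeping in the application of Theorem~\ref{douglas}, which is stated for closed densely defined operators on one Hilbert space. Here $T^*:\ell^2\to H$ and $A^{1/2}:H\to H$ act between different spaces. I would either note that Douglas' lemma holds verbatim for bounded operators between Hilbert spaces, or embed everything in $H\oplus\ell^2$. A second point to check carefully is the bounded-solution clause of (ii), so that $V$ and $W$ are genuinely bounded, and the identification $\|C_{AB}f\|_B=\|B^{1/2}C_{AB}f\|$, which relies on $B^{1/2}$ being the positive square root.
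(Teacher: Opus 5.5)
Your proof is correct and follows essentially the same route as the paper. Both arguments set $T=B^{1/2}C_{AB}$, rewrite the frame inequalities as $\alpha A\le T^*T\le\beta A$, and apply Douglas' lemma in each direction; the paper invokes the majorization/range-inclusion equivalence directly, whereas you pass through the explicit factorizations $T^*=A^{1/2}V$ and $A^{1/2}=T^*W$ to produce the constants, which is the same argument made slightly more explicit.
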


\begin{proof}
(1) Suppose that $F$ is an $AB$-frame with frame bounds $\alpha$ and $\beta$. By definition, $\mathcal{D}(C_{AB})=H$. Put $T=B^{1/2}C_{AB}$. For every $f\in H$, we have $\|Tf\|^2=\|B^{1/2}C_{AB}f\|^2=\|C_{AB}f\|_B^2$ and $\|A^{1/2}f\|^2=\|f\|_A^2$. Therefore, the upper frame inequality gives $\|Tf\|^2\leq\beta\|A^{1/2}f\|^2\leq\beta\|A\|\|f\|^2$. Hence $T=B^{1/2}C_{AB}\in\mathcal{B}(H,\ell^2)$.

The two frame inequalities can now be written as $\alpha\|A^{1/2}f\|^2\leq\|Tf\|^2\leq\beta\|A^{1/2}f\|^2$ for every $f\in H$. Since $\|Tf\|^2=\langle T^*Tf,f\rangle$ and $\|A^{1/2}f\|^2=\langle Af,f\rangle$, it follows that $\alpha A\leq T^*T\leq\beta A$.

From $\alpha A\leq T^*T$, we obtain $A\leq\alpha^{-1}T^*T$. By Douglas' range inclusion theorem, $\mathcal{R}(A^{1/2})\subseteq\mathcal{R}(T^*)$. Similarly, from $T^*T\leq\beta A$, Douglas' theorem gives $\mathcal{R}(T^*)\subseteq\mathcal{R}(A^{1/2})$. Consequently, $\mathcal{R}(T^*)=\mathcal{R}(A^{1/2})$. Since $T=B^{1/2}C_{AB}$, we conclude that $\mathcal{R}\bigl((B^{1/2}C_{AB})^*\bigr)=\mathcal{R}(A^{1/2})$. Thus, condition $(2)$ holds.

(2) Conversely, suppose that $\mathcal{D}(C_{AB})=H$, $T=B^{1/2}C_{AB}\in\mathcal{B}(H,\ell^2)$, and $\mathcal{R}(T^*)=\mathcal{R}(A^{1/2})$. The range equality gives both $\mathcal{R}(A^{1/2})\subseteq\mathcal{R}(T^*)$ and $\mathcal{R}(T^*)\subseteq\mathcal{R}(A^{1/2})$.

By Douglas' range inclusion theorem, the inclusion $\mathcal{R}(A^{1/2})\subseteq\mathcal{R}(T^*)$ implies that there exists $\lambda>0$ such that $A\leq\lambda T^*T$. Hence $\lambda^{-1}A\leq T^*T$. Similarly, the inclusion $\mathcal{R}(T^*)\subseteq\mathcal{R}(A^{1/2})$ implies that there exists $\mu>0$ such that $T^*T\leq\mu A$. Therefore, $\lambda^{-1}A\leq T^*T\leq\mu A$.

It follows that, for every $f\in H$, $\lambda^{-1}\langle Af,f\rangle\leq\langle T^*Tf,f\rangle\leq\mu\langle Af,f\rangle$. Since $\langle Af,f\rangle=\|f\|_A^2$ and $\langle T^*Tf,f\rangle=\|Tf\|^2=\|C_{AB}f\|_B^2$, we obtain $\lambda^{-1}\|f\|_A^2\leq\|C_{AB}f\|_B^2\leq\mu\|f\|_A^2$ for every $f\in H$. Thus, $F$ is an $AB$-frame with frame bounds $\alpha=\lambda^{-1}$ and $\beta=\mu$.
\end{proof}
\printbibliography[
heading=bibintoc,
title={References}
]
%\addcontentsline{toc}{section}{References}
%\bibliographystyle{plainnat}
%\bibliography{ref}
\end{document}